\algnewcommand\algorithmicinput{\textbf{Input:}}
\algnewcommand\Input{\item[\algorithmicinput]}
\algnewcommand\algorithmicoutput{\textbf{Output:}}
\algnewcommand\Output{\item[\algorithmicoutput]}
\newcommand{\blkdiag}{\mathtt{blkdiag}}
\newtheorem{thm}{Theorem}
\newtheorem{exm}{Example}
\newtheorem{lem}{Lemma}
\newtheorem{rem}{Remark}
\newtheorem{pro}{Proposition}
\newtheorem{defn}{Definition}
\newtheorem{coro}{Corollary}
\newtheorem{prob}{Problem}
\begin{document}

\title{\LARGE \bf
	Gramian-Based Model Reduction of Directed Networks
}

\author{Xiaodong~Cheng and
	 and Jacquelien M.A. Scherpen
	\thanks{Xiaodong Cheng is with Control Systems Group, Department of Electrical Engineering,
		Eindhoven University of Technology,
		5600 MB Eindhoven,
		The Netherlands
		{\tt\small x.cheng@tue.nl}}
	\thanks{Jacquelien M. A. Scherpen is with Jan C. Willems Center for Systems and Control, Engineering and Technology Institute Groningen, Faculty of Science and Engineering, University of Groningen, Nijenborgh 4, 9747 AG Groningen, the Netherlands.
		{\tt\small j.m.a.scherpen@rug.nl}}%
}

\maketitle

\begin{abstract}
	
This paper investigates a model reduction problem for linear directed network systems, in which the interconnections among the vertices are described by general weakly connected digraphs. First, the definitions of pseudo controllability and observability Gramians are proposed for semistable systems, and their solutions are characterized by Lyapunov-like equations. Then, we introduce a concept of vertex clusterability to guarantee the boundedness of the approximation error and use the newly proposed Gramians to facilitate the evaluation of the dissimilarity of each pair of vertices. An clustering algorithm is thereto provided to generate an appropriate graph clustering, whose characteristic matrix is employed as the projections in the Petrov-Galerkin reduction framework. The obtained reduced-order system preserves the weakly connected directed network structure, and the approximation error is computed by the pseudo Gramians. Finally, the efficiency of the proposed approach is illustrated by numerical examples.
 
\end{abstract}

\section{Introduction} 

A network system captures the behaviors of a collection of dynamical subsystems. In recent decades, the study of such systems gradually becomes a popular topic of interdisciplinary research, which appears in e.g., social and ecological interactions, chemical reactions and physical networks, see e.g. \cite{Newman2010NetworksIntroduction,Higham2008ChemicalReactions,boccaletti2006complex,Schaft2017PhysicalNetwork,XiaodongACOM2018Power} for an overview. An important property of network systems is consensus, which occurs when certain agreements are reached via exchanging the information among the vertices \cite{ren2005survey}. Formation control of mobile vehicles, current sharing and voltage regulation in DC microgrids, coordination of distributed sensors, and balancing
in chemical kinetics can be viewed as different applications of network consensus\cite{jadbabaie2003coordination,Scutari2008SensorNetworks,cucuzzella2018robust,Rao2015balancing}.
The model reduction of consensus network systems is motivated by the challenges of their large scale and high complexity that cause limitations for both theoretical analysis and experimental investigations. It is worth addressing a structure preserving model reduction problem, which aims to derive a lower-dimensional model that can approximate the behavior of the original network with an acceptable accuracy and without being too expensive to evaluate. Furthermore, in the reduction process, it is desirable to preserve a network structure, since such a structure determines the consensus property of a network and is essential for the further applications of reduced-order models to e.g., distributed controller design or sensor allocation.

A variety of techniques are available in the literature for reducing the dimension of a linear state-space system. Classic approaches include balanced truncation, Hankel norm approximation, and Krylov subspace methods, see e.g., \cite{moore1981principal,antoulas2005approximation,astolfi2010model}. They provide systematic procedures to generate reduced-order models that approximate the input-output characteristics of the original large-scale systems. 
Nevertheless, a direct application of these conventional methods may not maintain a network architecture in the reduced-order model, as they do not impose any structure for the reduced state space. Consequently, the states of network vertices are mixed and thus lose a network interpretation. In \cite{XiaodongBT2017}, a structure preserving method is developed using the generalized balanced truncation for undirected networks. Even though it yields a reduced-order model that can be realized as a network system by a proper coordinate transformation, the topology relation between  the original network and the reduced one is no longer clear.


Recently, graph clustering (or graph partition) has shown a great potential in the structure preserving model reduction of network systems. By assimilating the vertices in each cluster into a single vertex, the essential information of the original topology can be retained. It has to be emphasized that the idea of grouping vertices is relevant to the problem of community or cluster detection in static networks, see e.g., \cite{Aggarwal2013Clustering,Schaeffer2007SurveyClustering}.
For dynamical networks that exhibit consensus properties, the clustering process has to take into account the evolution of vertex states driven by external excitation and disturbance signals.

For dynamical systems on undirected networks, the methods developed by \cite{Monshizadeh2014,Petar2015CDC,Ishizaki2014} formulate the model reduction problem in the Petrov-Galerkin framework, and the projections are generated from selected graph clusterings. 
However, many applications are considering directed networks, e.g., chemical reaction networks \cite{Higham2008ChemicalReactions} or metabolic processes \cite{Banasiak2015EvolutionaryEquations}, where the mass/energy exchanges among different species are usually directional.

A pioneering approach dealing with semistable directed networks is proposed in \cite{ishizaki2015clustereddirected}. 
The graph clustering is formed based on a notion of cluster reducibility, characterized by the uncontrollability of local states. Merging the vertices in the reducible clusters then yields a reduced-order model that preserves the structural information of a directed network.
However,  the projection of this method heavily relies on the assumption that the network matrix has only one simple zero eigenvalue, whose corresponding eigenvectors (i.e., the \textit{Frobenius eigenvectors}) has all strictly positive entries.  An alternative approach in \cite{XiaodongCDC2017Digraph} focuses on the behavior of individual vertices described by transfer functions and pairwise dissimilarities evaluated by function norms. The vertices behaving similarly are sequentially assimilated to a single vertex. This approach is preferable for a consensus network and applicable to a strongly connected topology. In broader applications of dynamical networks, for instance, biochemical systems, sensor coordination, gene regulation, weakly connected spatial structures commonly appear in the networks, see e.g., \cite{Gunawardena2009nature,Scutari2008SensorNetworks,Ahsendorf2014GeneRegulation}.

This motivates us to consider weakly connected directed networks. As undirected networks and strongly connected networks are only subcategories of weakly connected ones, the systems studied in this paper describe more general scenarios, and the proposed method can be also applied to the former two cases. It is worth noting that a model reduction problem of weakly connected directed networks has been absent from the literature so far. The major difficulty for such networks is an appropriate clustering selection scheme. The approximation accuracy heavily relies on the resulting graph clustering, whereas finding an optimal clustered network is roughly an NP-hard problem even for static networks \cite{Aggarwal2013Clustering,SurveyClustering}.
More importantly, in \cite{XiaodongCDC2017Digraph,ishizaki2015clustereddirected}, projections are generated using the positive Frobenius eigenvectors of the system matrix. However, such vectors may not exist in the weakly connected case. Furthermore, a weakly connected network may not reach a global consensus as strongly connected ones do. Instead, a local consensus is achievable among the vertices that are able to influence each other. Consequently, the clustering for a weakly connected graph has to be prudently selected to avoid an unbounded approximation error. 
 
To tackle the above difficulties, this paper introduces a definition of vertex clusterability for weakly connected networks and shows that the boundedness of the approximation error is guaranteed if and only if clusterable vertices are aggregated. Thereby, the concept of dissimilarity is defined only for clusterable vertices. In contrast to \cite{XiaodongECC2016,XiaodongCDC2017Digraph}, the input and output dissimilarities are considered based on the responses of the vertex states to the external inputs and the measurement of the state discrepancy from the output channels, respectively. Thus, the pairwise dissimilarities are evaluated by combining the input and output efforts. 
Then, according to the vertex clusterability and dissimilarity, a graph cut algorithm is designed to partition the underlying network into a desired number of clusters. Then, a clustering-based projection is employed to reduce the dimension of the original network system, where the projection matrix is generated from the left kernel space of the system matrix. The proposed method yields a reduced-order model that preserves not only the structure and connectedness of directed network but several fundamental properties, including consensus, semistability, and asymptotic behaviors of the vertex.
 
Another contribution of this paper is to summarize the notion of controllability Gramians in \cite{XiaodongCDC2016Gramian,ishizaki2015clustereddirected} and extend the results to propose a pair of pseudo controllability and observability Gramians for general semistable systems. The new Gramians can be viewed as the generalization of standard Gramians for asymptotically stable systems. Moreover, the pseudo Gramians are   characterized by a set of Lyapunov equations, and their ranks are strongly related to the controllability and the observability of a semistable system. Using the pseudo Gramians, the $\mathcal{H}_2$-norm of a semistable system can be easily evaluated. Therefore, this paper employs them to facilitate the computation of input and output dissimilarities and thus provides a crucial step in the clustering-based model reduction.
  

The rest of this paper is organized as follows: In Section \ref{sec:Gramian}, we introduce the definition of pseudo Gramians for semistable systems, and some important properties of the new Gramians are the discussed. Section \ref{sec:networksystem} presents the model of directed networks, and the Petrov-Galerkin reduction framework is established based on graph clustering. Then, in Section \ref{sec:Reduction}
we define the vertex clusterability and dissimilarity, and propose a scheme for model reduction of directed networks. The proposed method is illustrated through an example in Section \ref{sec:example}, and finally, concluding remarks are made in Section \ref{sec:conclusion}.

\textit{Notation:}
Denote $\mathbb{R}$ as the set of real numbers and $\mathbb{R}_+$ as set of real nonnegative numbers. $\mathbb{R}^n$ is a vector space of $n$ dimension. Let $\mathbb{W}$ be a subspace of $\mathbb{R}^n$, then $\mathbb{W}^\perp$ denotes the orthogonal complement of $\mathbb{W}$ in $\mathbb{R}^n$.
The cardinality of a set $\mathbb{V}$ is denoted by $\lvert \mathbb{V} \rvert$, and $\mathsf{dim} (\mathbb{W})$ represents the dimension of space $\mathbb{W}$
The identity matrix of size $n$ is given as $I_n$, and $\mathds{1}_n$ denotes a $n$-entries vector of all ones. The subscript $n$ is omitted when no confusion arises. $\mathbf{e}_i$ is the $i$-th column vector of $I_n$, and $\mathbf{e}_{ij} = \mathbf{e}_i - \mathbf{e}_j$. The trace, rank, image and nullspace of $A$ are denoted by $\mathsf{tr}(A)$, $\mathsf{rank}(A)$, $\mathsf{im}(A)$, and $\mathsf{ker}(A)$, respectively.  
%
%
%
%
\section{Gramians of Semistable System} \label{sec:Gramian}

We make the result of this section self-contained and independent of the model reduction of directed network systems. 
This section extends the definition of \textit{controllability and observability Gramians} from asymptotically stable systems to semistable ones. In our preliminary results in \cite{XiaodongCDC2016Gramian,XiaodongTAC20172OROM}, new Gramians are introduced for first-order and second-order network systems. Here, we present a generalization of the results to general semistable systems.

Consider the state-space model of a linear time-invariant system 
\begin{equation} \label{syss}
\left \{
\begin{array}{l}
\dot{x}(t) = A x(t) + Bu(t), \\
y(t) = C x(t),
\end{array}
\right.
\end{equation}
with states $x \in \mathbb{R}^n$, inputs $u \in \mathbb{R}^p$ and outputs $y \in \mathbb{R}^q$. 
\begin{defn} \label{defn:SemiSys}\cite{Semistable}
	The system \eqref{syss} is \textbf{semistable} if $\lim\limits_{t \rightarrow \infty} x(t)$ exists for all initial states $x(0)$ and $u(t) = 0$. 
\end{defn}
A necessary and sufficient condition for the semistability of $\bm{\Sigma}$ is provided in \cite{Semistable}.
\begin{lem} \label{lem:semisimplestable}
	The system \eqref{syss} is semistable if and only if the zero eigenvalues of $A$ in \eqref{syss} are \textbf{semisimple}, i.e., the geometric multiplicity of the zero eigenvalue coincides with the algebraic multiplicity, and all the other eigenvalues have negative real parts. 
\end{lem}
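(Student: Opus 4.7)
The plan is to reduce the question to the asymptotic behavior of the matrix exponential $e^{At}$, and then exploit the Jordan canonical form of $A$ to get a case-by-case description of which spectra permit convergence. Since $u\equiv 0$, the free response is $x(t)=e^{At}x(0)$. Requiring $\lim_{t\to\infty}x(t)$ to exist for \emph{every} initial condition $x(0)$ is therefore equivalent to requiring that $\lim_{t\to\infty}e^{At}$ exist as an element of $\mathbb{R}^{n\times n}$, which is the formulation I would work with throughout.

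Next, I would write $A=PJP^{-1}$ with $J$ in Jordan form, so that $e^{At}=Pe^{Jt}P^{-1}$. Because $P$ is invertible, convergence of $e^{At}$ is equivalent to convergence of $e^{Jt}$, which in turn is equivalent to convergence of $e^{J_k t}$ for every Jordan block $J_k$ of $J$. For a block of the form $J_k=\lambda I_m+N_m$ with $N_m$ the nilpotent shift, the closed form $e^{J_k t}=e^{\lambda t}\sum_{i=0}^{m-1}\tfrac{t^i}{i!}N_m^i$ lets me classify the behavior block by block.

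The case analysis would then be: (i) $\mathrm{Re}(\lambda)<0$ gives $e^{J_k t}\to 0$, since the exponential decay dominates any polynomial factor; (ii) $\mathrm{Re}(\lambda)>0$ gives $\|e^{J_k t}\|\to\infty$; (iii) $\mathrm{Re}(\lambda)=0$ with $\lambda\neq 0$ gives an oscillatory scalar factor $e^{\lambda t}$, hence no limit exists regardless of $m$; (iv) $\lambda=0$ with $m=1$ gives the constant block $e^{J_k t}=1$, while $\lambda=0$ with $m\geq 2$ gives genuine polynomial growth in $t$. Assembling these four verdicts shows that $\lim_{t\to\infty}e^{Jt}$ exists iff every eigenvalue either has strictly negative real part or equals zero with all its Jordan blocks of size one, which is precisely the stated spectral condition.

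I do not expect any deep obstacle: the hardest point is merely bookkeeping in case (iii), where one must observe that even a simple imaginary eigenvalue $\lambda=i\omega\neq 0$ destroys convergence (so that purely imaginary eigenvalues cannot be absorbed into a ``semisimple'' relaxation as the zero eigenvalue can), and in case (iv), where one must note that a nilpotent part attached to $\lambda=0$ produces a $t$-polynomial factor with no exponential damping, making the limit fail. The equivalence between ``semisimple zero eigenvalue'' and ``all Jordan blocks at $0$ are $1\times 1$'' is then just the standard linear-algebra fact that geometric multiplicity equals the number of Jordan blocks while algebraic multiplicity equals their total size.
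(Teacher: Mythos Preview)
Your argument is correct and is the standard Jordan-form approach to this characterization. Note, however, that the paper does not supply its own proof of this lemma: it is stated with a citation to \cite{Semistable} and left unproved in the text. So there is no in-paper proof to compare against; your proposal simply fills in the omitted argument. The reduction from ``$\lim_{t\to\infty}x(t)$ exists for every $x(0)$'' to ``$\lim_{t\to\infty}e^{At}$ exists'' is exactly right, and the block-by-block classification (i)--(iv) is complete and accurate, including the observation in (iii) that nonzero purely imaginary eigenvalues already preclude convergence regardless of block size, and in (iv) that a nontrivial nilpotent part at $\lambda=0$ yields undamped polynomial growth.
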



Generally, semistable systems are not in the $\mathcal{H}_2$ space, meaning that the standard controllability and observability Gramians in \cite{antoulas2005approximation} are not well-defined for the semistable case. 
Thus, we propose new definitions of Gramians for semistable systems.
\begin{defn} \label{defn:ConGram}
	Consider the semistable system  \eqref{syss}. The \textbf{pseudo controllability and observability Gramians} are defined as 
	\begin{subequations} \label{defn:PseudoGramians}
		\begin{equation} \label{defn:P}
		\mathcal{P} = \int_{0}^{\infty} 
		(e^{A\tau}-\mathcal{J}) BB^\top (e^{A^\top\tau}-\mathcal{J}^\top) \mathrm{d}\tau \in \mathbb{R}^{n \times n},
		\end{equation}
		\begin{equation} \label{defn:Q}
		\mathcal{Q} = \int_{0}^{\infty} 
		(e^{A^\top\tau}-\mathcal{J}^\top) C^\top C (e^{A \tau}-\mathcal{J}) \mathrm{d}\tau \in \mathbb{R}^{n \times n},
		\end{equation}
	\end{subequations}
	where $\mathcal{J} : = \lim\limits_{\tau \rightarrow \infty} e^{A\tau}$ is a constant matrix.
\end{defn} 

Note that the pseudo Gramians in \eqref{defn:P} and \eqref{defn:Q} are well-defined, since the integrands in both integral converge to zero when $\tau \rightarrow \infty$. Furthermore, using the matrix $\mathcal{J}$, the Lyapunov characteristics of $\mathcal{P}$ and $\mathcal{Q}$ in Definition 
\ref{defn:ConGram} are provided.
\begin{thm} \label{thm:GenGram}
	Consider the semistable system  \eqref{syss}. The pseudo controllability and observability Gramians of $\bm{\Sigma_s}$, $\mathcal{P}$ and $\mathcal{Q}$ defined in \eqref{defn:PseudoGramians}, are the unique symmetric solutions of
	the following sets of linear matrix equations
	\begin{subequations} \label{eq:thm2eqs}
		\begin{empheq}[left=\empheqlbrace]{align}
		0 & = A \mathcal{P} + \mathcal{P} A^\top + (I-\mathcal{J})BB^\top(I-\mathcal{J}^\top),  \label{eq:LyapLike} \\
		0 & = \mathcal{J} \mathcal{P} \mathcal{J}^\top.
		\label{eq:VPV}
		\end{empheq}
	\end{subequations}
	\begin{subequations} \label{eq:thm2eqsQ}
		\begin{empheq}[left=\empheqlbrace]{align}
		0 &= A^\top \mathcal{Q} + \mathcal{Q} A  + (I-\mathcal{J}^\top)C^\top C(I-\mathcal{J}),  \label{eq:LyapLikeQ} \\
		0 &= \mathcal{J}^\top \mathcal{Q} \mathcal{J}. \label{eq:UQU}
		\end{empheq}	
	\end{subequations}
	
\end{thm}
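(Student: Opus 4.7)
The plan is to exploit the spectral structure of $\mathcal{J}$ forced by semistability, verify both matrix equations by direct computation from the integral definition, and then pin down uniqueness via the auxiliary constraint \eqref{eq:VPV}. The first step is to record the key identities satisfied by $\mathcal{J}$. Since the zero eigenvalue of $A$ is semisimple (Lemma \ref{lem:semisimplestable}), $\mathcal{J}=\lim_{\tau\to\infty}e^{A\tau}$ exists and equals the spectral projector onto $\ker A$ along $\mathrm{im}(A)$. From $e^{As}\mathcal{J}=\lim_{t\to\infty}e^{A(s+t)}=\mathcal{J}$, and symmetrically $\mathcal{J} e^{As}=\mathcal{J}$, one obtains $\mathcal{J}^2=\mathcal{J}$ together with $A\mathcal{J}=\mathcal{J} A=0$. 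Consequently, $M(\tau):=e^{A\tau}-\mathcal{J}$ satisfies $\dot M(\tau)=AM(\tau)=M(\tau)A$ with $M(0)=I-\mathcal{J}$ and $M(\infty)=0$.

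To verify \eqref{eq:LyapLike}, I would differentiate $M(\tau)BB^\top M(\tau)^\top$ in $\tau$ and integrate from $0$ to $\infty$; the fundamental theorem of calculus then yields $A\mathcal{P}+\mathcal{P} A^\top=-(I-\mathcal{J})BB^\top(I-\mathcal{J}^\top)$. Equation \eqref{eq:VPV} follows even more directly from $\mathcal{J} M(\tau)=\mathcal{J}-\mathcal{J}^2=0$, which forces the entire integrand of $\mathcal{J}\mathcal{P}\mathcal{J}^\top$ to vanish pointwise. Symmetry of $\mathcal{P}$ is manifest from its integral form.

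The main obstacle is uniqueness, because \eqref{eq:LyapLike} alone is a singular Lyapunov equation: the semisimple zero eigenvalue of $A$ makes the Lyapunov operator $X\mapsto AX+XA^\top$ noninvertible, so the equation admits a nontrivial affine family of symmetric solutions. Let $\Delta:=\mathcal{P}_1-\mathcal{P}_2$ be the difference of two symmetric solutions of \eqref{eq:thm2eqs}. Then $A\Delta+\Delta A^\top=0$ gives $\frac{d}{dt}(e^{At}\Delta e^{A^\top t})=0$, so $\Delta=e^{At}\Delta e^{A^\top t}$ for every $t\ge 0$. Letting $t\to\infty$ yields $\Delta=\mathcal{J}\Delta\mathcal{J}^\top$, and combining this with $\mathcal{J}\Delta\mathcal{J}^\top=0$ (obtained by subtracting the two instances of \eqref{eq:VPV}) forces $\Delta=0$. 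The dual statements \eqref{eq:LyapLikeQ}--\eqref{eq:UQU} for $\mathcal{Q}$ follow by applying the identical argument to the adjoint system $(A^\top,C^\top)$, since $\lim_{\tau\to\infty} e^{A^\top \tau}=\mathcal{J}^\top$ inherits the same projector properties as $\mathcal{J}$.
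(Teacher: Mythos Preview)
Your proposal is correct and follows essentially the same route as the paper's proof: establish the projector identities $\mathcal{J}^2=\mathcal{J}$, $A\mathcal{J}=\mathcal{J} A=0$, $e^{A\tau}\mathcal{J}=\mathcal{J} e^{A\tau}=\mathcal{J}$; differentiate $(e^{A\tau}-\mathcal{J})BB^\top(e^{A^\top\tau}-\mathcal{J}^\top)$ and integrate to obtain \eqref{eq:LyapLike}; use $\mathcal{J}(e^{A\tau}-\mathcal{J})=0$ for \eqref{eq:VPV}; and for uniqueness argue that $e^{At}\Delta e^{A^\top t}$ is constant and let $t\to\infty$. The only cosmetic difference is that the paper derives these identities through an explicit eigendecomposition $A=\mathcal{U}D\mathcal{U}^{-1}$ with $\mathcal{J}=UV^\top$, whereas you work directly from the limit definition of $\mathcal{J}$.
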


\begin{proof}
	Assume that $A$ in \eqref{syss} has zero eigenvalues with the geometric (or algebraic) multiplicity $m$, which means that the eigenspace of the zero eigenvalues has dimension $m$. Therefore, there exists a similarity transformation 
	\begin{equation} \label{eq:decompostion}
	A = \mathcal{U} D \mathcal{U}^{-1}
	=\begin{bmatrix}
	U & \bar{U}
	\end{bmatrix}  \begin{bmatrix}
	{0}_{m \times m} & \\
	&  \bar{A}\\ 
	\end{bmatrix}
	\begin{bmatrix}
	V^\top\\
	\bar{V}^\top\\ 
	\end{bmatrix},
	\end{equation} 
	such that $\bar{A} \in \mathbb{R}^{(n-m) \times (n-m)} $ is Hurwitz, and the matrices $U \in \mathbb{R}^{n \times m}$ and $V \in \mathbb{R}^{n \times m}$ fulfill
	\begin{equation} \label{eq:UVcond}
	\mathcal{R}(U) = \mathcal{N}(A), \ \mathcal{R}(V) = \mathcal{N}(A^\top),
	\ \text{and} \
	V^\top U = I_m.
	\end{equation}
	Note that the product $U V^\top$ is invariant to the choices for $U$ and $V$, and it coincides with the matrix $\mathcal{J}$ in \eqref{defn:PseudoGramians}, i.e., 
	\begin{equation} \label{defn:convergence}
	\mathcal{J}= \lim\limits_{\tau \rightarrow \infty} e^{A\tau}  = U V^\top.
	\end{equation}
	Therefore, the following equations hold:
	\begin{equation} \label{eq:J2=J}
	\mathcal{J}^2 = \mathcal{J}, \ A \mathcal{J}  = 0, \ \text{and} \
	\mathcal{J} A   = 0.
	\end{equation}
	Furthermore, for any $\tau \in \mathbb{R}$,
	\begin{equation} \label{eq:JeAt}
	\mathcal{J} e^{A\tau} = \mathcal{J} 
	\left(I+\sum_{k=1}^{\infty} 
	\dfrac{A^k\tau^k}{k!}\right) =  \mathcal{J}, \ \text{and} \ e^{A\tau} \mathcal{J} = \mathcal{J}.
	\end{equation}
	Notice that 
	\begin{align} \label{eq:intsplit}
	& \dfrac{d}{d \tau} \left[ (e^{A\tau}-\mathcal{J}) BB^\top (e^{A^\top\tau}-\mathcal{J}^\top) \right] \nonumber\\
	=  &
	A e^{A\tau} BB^\top (e^{A^\top\tau}-\mathcal{J}^\top) + 
	(e^{A\tau}-\mathcal{J}) BB^\top e^{A^\top\tau} A^\top.
	\end{align} 
	Taking the integral of each term leads to  
	\begin{align} \label{eq:part1}
	\int_{0}^{\infty} A e^{A\tau} BB^\top (e^{A^\top\tau}-\mathcal{J}) \mathrm{d}\tau 
	= & \int_{0}^{\infty} A (e^{A\tau} -\mathcal{J} + \mathcal{J}) BB^\top  (e^{A^\top\tau}-\mathcal{J}) \mathrm{d}\tau
	\nonumber \\
	= & A \mathcal{P} 
	+ A\mathcal{J} BB^\top \int_{0}^{\infty} (e^{A^\top\tau}-\mathcal{J}) \mathrm{d}\tau
	= 
	A \mathcal{P},
	\end{align}
	and similarly,
	\begin{equation}
	\begin{split} \label{eq:part2}
	\int_{0}^{\infty} (e^{A\tau}-\mathcal{J}) BB^\top e^{A^\top\tau} A^\top \mathrm{d}\tau
	= \mathcal{P} A^\top,
	\end{split}
	\end{equation}
	Consequently, we obtain
	\begin{align}\label{eq:part3}
	A \mathcal{P} + \mathcal{P} A  
	=&\int_{0}^{\infty} \dfrac{d}{d \tau} \left[ (e^{A\tau}-\mathcal{J}) BB^\top (e^{A^\top\tau}-\mathcal{J}^\top) \right] \mathrm{d}\tau \nonumber \\ 
	=  &\left. (e^{A\tau}-\mathcal{J}) BB^\top (e^{A^\top\tau}-\mathcal{J}^\top) \right|_{0}^{\infty} 
	\nonumber\\
	= & (I-\mathcal{J})BB^\top(I-\mathcal{J}^\top).  
	\end{align}
	%
	The second equation in \eqref{eq:VPV} can be seen from the fact that
	$
	\mathcal{J}  \left(e^{A\tau}-\mathcal{J}\right)  = \mathcal{J} - \mathcal{J} = 0.
	$
	
	%
	Next, we prove the uniqueness of the solution of \eqref{eq:thm2eqs} by contradiction. Assume that two symmetric matrices $\mathcal{P}_1$ and $\mathcal{P}_2$ satisfy \eqref{eq:thm2eqs} and $\mathcal{P}_1 \ne \mathcal{P}_2$.
	From \eqref{eq:LyapLike}, we have
	\begin{equation}
	{A} (\mathcal{P}_1-\mathcal{P}_2) + (\mathcal{P}_1-\mathcal{P}_2) {A}^\top =0, 
	\end{equation}
	which leads to
	\begin{align}
	e^{{A}\tau}\left[{A} (\mathcal{P}_1-\mathcal{P}_2) + (\mathcal{P}_1-\mathcal{P}_2) {A}^\top \right]e^{{A}^\top\tau} 
	=\dfrac{d}{\mathrm{d}\tau}\left[e^{{A}\tau} (\mathcal{P}_1-\mathcal{P}_2) e^{{A}^\top t}\right]= 0.
	\end{align}
	Therefore, 
	$
	\int_{0}^{\infty}\dfrac{d}{\mathrm{d}\tau}\left[e^{{A}\tau} (\mathcal{P}_1-\mathcal{P}_2) e^{{A}^\top\tau}\right] \mathrm{d}\tau= 0,
	$
	which implies that
	\begin{equation} \label{eq:P1P2eq}
	\mathcal{P}_1-\mathcal{P}_2 = \mathcal{J} (\mathcal{P}_1-\mathcal{P}_2) \mathcal{J}^\top.
	\end{equation}
	As both $\mathcal{P}_1$ and $\mathcal{P}_2$ satisfy \eqref{eq:VPV}, the equation \eqref{eq:P1P2eq} becomes zero, which, however,
	contradicts the assumption that $\mathcal{P}_1 \ne \mathcal{P}_2$. Therefore, the common solution of \eqref{eq:LyapLike} and \eqref{eq:VPV} is unique. The proof of the pseudo observability Gramian in \eqref{eq:thm2eqsQ} is similar to the controllability Gramian part, and thus the details are omitted here.
\end{proof}

\begin{rem}
	It is implied by \eqref{eq:VPV} and \eqref{eq:UQU} that the pseudo Gramians $\mathcal{P}$ and $\mathcal{Q}$ are positive semidefinite. Particularly, when $A$ is Hurwitz, i.e., $\bm{\Sigma}$ is asymptotically stable, it follows that $\mathcal{J} = 0$, implying that $\mathcal{P}$ and $\mathcal{Q}$ in \eqref{defn:PseudoGramians} become the standard Gramians. Thus, the pseudo Gramians are generalizations of the standard ones.
\end{rem}
Due to the singularity of the $A$ matrix, there may exist multiple solutions of the Lyapunov equations in \eqref{eq:LyapLike} and \eqref{eq:LyapLikeQ}. For instance, suppose a symmetric matrix $\mathcal{P}$ is a solution of \eqref{eq:LyapLike}, then any matrix $\mathcal{P} + \Delta_\mathcal{P}$, with $\Delta_\mathcal{P} = \Delta_\mathcal{P}^\top$ and $A \Delta_\mathcal{P} = 0$, is also a solution of \eqref{eq:LyapLike}. However, combining the Lyapunov equations in \eqref{eq:LyapLike} and \eqref{eq:LyapLikeQ} with the constraints in \eqref{eq:VPV} and \eqref{eq:UQU}, we can determine the pseudo Gramians $\mathcal{P}$ and $\mathcal{Q}$ uniquely.  
\begin{coro} \label{coro:GenGram}
	Let $\mathcal{P}_a$ and $\mathcal{Q}_a$ be arbitrary solutions of the Lyapunov equations in \eqref{eq:LyapLike} and \eqref{eq:LyapLikeQ}, respectively. Then, the  pseudo controllability and observability Gramians, $\mathcal{P}$ and $\mathcal{Q}$ are computed as
	\begin{subequations}
		\begin{equation} \label{eq:SolveP}
		\mathcal{P} = \mathcal{P}_a - \mathcal{J} \mathcal{P}_a \mathcal{J}^\top.
		\end{equation}
		\begin{equation} \label{eq:SolveQ}
		\mathcal{Q} = \mathcal{Q}_a - \mathcal{J}^\top \mathcal{Q}_a \mathcal{J}.
		\end{equation}
	\end{subequations}
	with $\mathcal{J}$ a constant matrix defined in \eqref{defn:PseudoGramians}.
\end{coro}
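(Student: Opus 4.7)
The plan is to verify that the formulas in \eqref{eq:SolveP} and \eqref{eq:SolveQ} yield matrices that satisfy both pairs of defining conditions in Theorem \ref{thm:GenGram}, and then invoke uniqueness to conclude. I focus on the controllability case; the observability case is analogous with the roles of $A$ and $A^\top$, and $B$ and $C^\top$, exchanged.

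First, let $\mathcal{P}_a$ be any symmetric solution of the Lyapunov equation \eqref{eq:LyapLike}, and define
\begin{equation*}
\mathcal{P} := \mathcal{P}_a - \mathcal{J} \mathcal{P}_a \mathcal{J}^\top.
\end{equation*}
I would first check that $\mathcal{P}$ still satisfies \eqref{eq:LyapLike}. Plugging in,
\begin{equation*}
A\mathcal{P} + \mathcal{P} A^\top = A\mathcal{P}_a + \mathcal{P}_a A^\top - A\mathcal{J}\mathcal{P}_a\mathcal{J}^\top - \mathcal{J}\mathcal{P}_a\mathcal{J}^\top A^\top,
\end{equation*}
and the last two correction terms vanish identically by \eqref{eq:J2=J}, since $A\mathcal{J}=0$ and therefore also $\mathcal{J}^\top A^\top = 0$. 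Hence $A\mathcal{P}+\mathcal{P}A^\top$ equals $A\mathcal{P}_a+\mathcal{P}_a A^\top = -(I-\mathcal{J})BB^\top(I-\mathcal{J}^\top)$, so \eqref{eq:LyapLike} holds. Symmetry of $\mathcal{P}$ is immediate from that of $\mathcal{P}_a$.

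Next I would verify the constraint \eqref{eq:VPV}. Using $\mathcal{J}^2 = \mathcal{J}$ (and correspondingly $(\mathcal{J}^\top)^2 = \mathcal{J}^\top$) from \eqref{eq:J2=J},
\begin{equation*}
\mathcal{J}\mathcal{P}\mathcal{J}^\top = \mathcal{J}\mathcal{P}_a\mathcal{J}^\top - \mathcal{J}^2 \mathcal{P}_a (\mathcal{J}^\top)^2 = \mathcal{J}\mathcal{P}_a\mathcal{J}^\top - \mathcal{J}\mathcal{P}_a\mathcal{J}^\top = 0,
\end{equation*}
so \eqref{eq:VPV} holds as well. By the uniqueness statement in Theorem \ref{thm:GenGram}, the matrix $\mathcal{P}$ just constructed must coincide with the pseudo controllability Gramian.

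The argument for \eqref{eq:SolveQ} mirrors the above: take any symmetric $\mathcal{Q}_a$ solving \eqref{eq:LyapLikeQ}, set $\mathcal{Q} := \mathcal{Q}_a - \mathcal{J}^\top \mathcal{Q}_a \mathcal{J}$, use $\mathcal{J}A=0$ to show the Lyapunov equation is preserved, use idempotency of $\mathcal{J}$ to verify \eqref{eq:UQU}, and invoke uniqueness again. Nothing in this corollary is really hard; the only subtlety worth watching is to use the correct identity from \eqref{eq:J2=J} on the correct side ($A\mathcal{J}=0$ on the left, $\mathcal{J}A=0$ on the right), which is what guarantees the correction term $\mathcal{J}\mathcal{P}_a\mathcal{J}^\top$ lies in the kernel of the Lyapunov operator while simultaneously cancelling the $\mathcal{J}$-sandwich of $\mathcal{P}_a$ itself.
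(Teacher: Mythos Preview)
Your proof is correct. It differs slightly in route from the paper's. The paper argues in the opposite direction: it starts from the pseudo Gramian $\mathcal{P}$ (already known to exist by Theorem~\ref{thm:GenGram}) and invokes the intermediate identity \eqref{eq:P1P2eq} from that theorem's proof, which says that any two symmetric solutions of \eqref{eq:LyapLike} satisfy $\mathcal{P}_a-\mathcal{P}=\mathcal{J}(\mathcal{P}_a-\mathcal{P})\mathcal{J}^\top$; combining this with $\mathcal{J}\mathcal{P}\mathcal{J}^\top=0$ immediately gives $\mathcal{P}_a-\mathcal{P}=\mathcal{J}\mathcal{P}_a\mathcal{J}^\top$. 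Your approach instead defines the candidate $\mathcal{P}_a-\mathcal{J}\mathcal{P}_a\mathcal{J}^\top$ and directly checks both \eqref{eq:LyapLike} and \eqref{eq:VPV} using only $A\mathcal{J}=0$ and $\mathcal{J}^2=\mathcal{J}$, then appeals to uniqueness. Both arguments are equally short; the paper's is marginally slicker because it recycles \eqref{eq:P1P2eq}, while yours is more self-contained since it does not reach back into the uniqueness proof for an auxiliary identity.
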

\begin{proof}
	Since both $\mathcal{P}_a$ and $\mathcal{P}$ are solutions of \eqref{eq:LyapLike}, it follows from \eqref{eq:P1P2eq} that 
	\begin{equation} 
	\begin{split}
	\mathcal{P}_a-\mathcal{P}  = \mathcal{J} (\mathcal{P}_a-\mathcal{P}) \mathcal{J}^\top  
	=  \mathcal{J} \mathcal{P}_a  \mathcal{J}^\top,
	\end{split}
	\end{equation}
	where the second equality holds due to \eqref{eq:VPV}. Thus, \eqref{eq:SolveP} is verified, and \eqref{eq:SolveQ} can be proven analogously. 
\end{proof}


Hereafter, we discuss the relation between the controllability and the observability of the semistable system $ \bm{\Sigma}$ and the pseudo Gramians. 
\begin{thm} \label{thm:controlrank}
	Consider a semistable system $\bm{\Sigma}$ with pseudo controllability and observability Gramians $\mathcal{P}$ and $\mathcal{Q}$, respectively. Let $m$ be the algebraic (or geometric) multiplicity of the zero eigenvalues of $A$. Then, 
	\begin{enumerate}
		\item $\bm{\Sigma}$ is controllable if and only if   $\mathtt{rank}(\mathcal{P})=n-m$ and $\xi^\top B \ne 0$, for any nonzero vector $\xi \in \mathcal{N}(A^\top)$;
		
		\item $\bm{\Sigma}$ is observable if and only if $\mathtt{rank}(\mathcal{Q})=n-m$ and $C\xi \ne 0$, for any nonzero vector $\xi \in \mathcal{N}(A)$.
	\end{enumerate}
\end{thm}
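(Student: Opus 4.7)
The plan is to reduce the characterization to the Popov--Belevitch--Hautus (PBH) eigenvector test by separating the zero-eigenvalue subspace from the asymptotically stable part of $A$, reusing the similarity decomposition in \eqref{eq:decompostion} from the proof of Theorem~\ref{thm:GenGram}. From \eqref{defn:convergence} we have $\mathcal{J} = UV^\top$, and combining this with $\mathcal{U}^{-1}\mathcal{U} = I$ (which yields the block identities $V^\top U = I_m$, $V^\top \bar{U} = 0$, $\bar{V}^\top U = 0$, $\bar{V}^\top \bar{U} = I_{n-m}$) gives the compact identity $e^{A\tau} - \mathcal{J} = \bar{U} e^{\bar{A}\tau} \bar{V}^\top$. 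Substituting this into \eqref{defn:P} collapses the pseudo Gramian into
\[ \mathcal{P} = \bar{U}\, \bar{\mathcal{P}}\, \bar{U}^\top, \qquad \bar{\mathcal{P}} := \int_0^\infty e^{\bar{A}\tau} (\bar{V}^\top B)(\bar{V}^\top B)^\top e^{\bar{A}^\top \tau}\, \mathrm{d}\tau, \]
where $\bar{\mathcal{P}}$ is the \emph{standard} controllability Gramian of the Hurwitz pair $(\bar{A}, \bar{V}^\top B)$. Since $\bar{U} \in \mathbb{R}^{n\times(n-m)}$ has full column rank, $\mathsf{rank}(\mathcal{P}) = \mathsf{rank}(\bar{\mathcal{P}})$, so $\mathsf{rank}(\mathcal{P}) = n-m$ is equivalent to controllability of $(\bar{A}, \bar{V}^\top B)$.

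Next I would invoke the PBH test for $(A,B)$ and split the left eigenvalues of $A$ into $\lambda = 0$ and $\lambda \ne 0$. The zero-eigenvalue left eigenvectors are precisely the nonzero elements of $\mathcal{N}(A^\top) = \mathsf{im}(V)$, so the PBH condition at $\lambda = 0$ is exactly $\xi^\top B \ne 0$ for every nonzero $\xi \in \mathcal{N}(A^\top)$. For $\lambda \ne 0$, the block structure of $A^\top = \mathcal{U}^{-\top} D^\top \mathcal{U}^\top$ shows that every such left eigenvector of $A$ has the form $\xi = \bar{V}\nu$ with $\nu$ a left eigenvector of $\bar{A}$, so $\xi^\top B = \nu^\top(\bar{V}^\top B)$; hence PBH on $(A,B)$ at nonzero eigenvalues coincides with PBH on $(\bar{A}, \bar{V}^\top B)$. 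Combining both cases with the rank characterization from the first paragraph yields part~(1).

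Part~(2) then follows by duality: applying the identical procedure to the pair $(A^\top, C^\top)$ produces $\mathcal{Q} = \bar{V}\bar{\mathcal{Q}}\bar{V}^\top$ with $\bar{\mathcal{Q}}$ the standard observability Gramian of $(\bar{A}, C\bar{U})$, and the PBH observability test splits into the condition $C\xi \ne 0$ on $\mathcal{N}(A)$ at the eigenvalue zero and into observability of $(\bar{A}, C\bar{U})$ at the remaining eigenvalues. The main obstacle I expect is establishing the bijection between left eigenvectors of $A$ at nonzero $\lambda$ and those of $\bar{A}$ when $\bar{A}$ has repeated or complex eigenvalues; however, the PBH test needs only the kernel of $\lambda I - A$ for each $\lambda$, and the identities $V^\top \bar{U} = 0$ and $\bar{V}^\top U = 0$ force that kernel to be exactly the image under $\bar{V}$ of $\ker(\lambda I - \bar{A})$, so the reduction is clean.
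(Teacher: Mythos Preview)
Your argument is correct and considerably cleaner than the paper's. The key move you make that the paper does not is the factorization $e^{A\tau}-\mathcal{J}=\bar U e^{\bar A\tau}\bar V^\top$, which immediately collapses $\mathcal{P}$ to $\bar U\bar{\mathcal P}\bar U^\top$ and turns the rank statement into the classical Gramian criterion for the Hurwitz pair $(\bar A,\bar V^\top B)$. You then finish with the PBH test, splitting the eigenvalues of $A$ into $\lambda=0$ (handled by the hypothesis $\xi^\top B\neq 0$ on $\mathcal{N}(A^\top)$) and $\lambda\neq 0$ (reduced to PBH on $(\bar A,\bar V^\top B)$ via the bijection $\xi=\bar V\nu$, which, as you note, follows from $\mathcal{U}^{-1}\mathcal{U}=I$).

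The paper instead works with the finite-time Gramian $P_s(0,t_f)$ and its pseudo analogue $\mathcal{P}(0,t_f)$, decomposes an arbitrary $\xi\in\mathbb{R}^n$ as $\alpha\xi_1+\beta\xi_2$ with $\xi_1\in\mathcal{R}(V)$ and $\xi_2\in\mathcal{R}(U)^\perp$, verifies directly that $\mathcal{N}(\mathcal{P}(0,t_f))=\mathcal{R}(V)$ when the system is controllable, and only at the end lets $t_f\to\infty$. This avoids invoking PBH and stays entirely in the time-domain integral, but at the cost of a longer case analysis. Your route is more structural: it makes explicit that $\mathcal{P}$ is nothing but the ordinary Gramian of the stable quotient embedded back into $\mathbb{R}^n$, so the ``pseudo'' rank condition is literally the standard one in disguise. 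Both approaches rely on the same decomposition \eqref{eq:decompostion}; yours simply exploits it more fully.
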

\begin{proof}
	We prove the first statement. Consider the \textit{finite-time controllability Gramian} of $ \bm{\Sigma}$: 
	\begin{equation} \label{def:fintimGramsP}
	{P}_s (0,t_f) =  \int_{0}^{t_f} 
	e^{A\tau}  BB^\top  e^{A^\top\tau}  \mathrm{d}\tau,
	\end{equation}
	which is bounded and positive semidefinite as $t_f$ is finite, and from \cite{antoulas2005approximation}, we have $\bm{\Sigma}$ is controllable on $\left[0, t_f\right]$ if and only if ${P}_s (0,t_f)$ in \eqref{def:fintimGramsP} is full rank. Analogously, \textit{finite-time pseudo controllability Gramian} of $\bm{\Sigma}$ is defined as
	\begin{equation} \label{eq:GenFintGramian}
	\mathcal{P}(0,t_f) =  \int_{0}^{t_f} 
	(e^{A\tau}-\mathcal{J}) BB^\top (e^{A^\top\tau}-\mathcal{J}^\top) \mathrm{d}\tau \succcurlyeq 0.
	\end{equation}
	We then prove the necessary and sufficient condition of the controllability of $\bm{\Sigma}$ using the rank of $\mathcal{P}(0,t_f)$.
	
	\textit{Necessity}: Let $\bm{\Sigma}$ be controllable. Thus,  ${P}_s (0,t_f) \succ 0$, i.e., for all nonzero vector $\xi$, 
	\begin{equation} \label{eq:xiPxi}
	\xi^\top {P}_s (0,t_f) \xi =  \int_{0}^{t_f} 
	\xi^\top e^{A\tau}  B B^\top  e^{A^\top\tau} \xi \mathrm{d}\tau > 0.
	\end{equation}
	Equivalently, there is
	no vector $\xi \neq 0$ such that $\xi^\top e^{A\tau}  B = 0$, $\forall~ \tau \in \left[0, t_f\right]$.  
	
	To determine the rank of $\mathcal{P}(0,t_f)$, we first show that 
	$
	\mathtt{dim}(\mathcal{N}(\mathcal{P}(0,t_f)))= m.
	$ 
	Consider the decomposition of $A$ in \eqref{eq:decompostion}, where 
	\begin{equation}
	\mathcal{R}(U) \cup \mathcal{R}(\bar{U}) = \mathcal{R}(U) \cup \mathcal{R}(V)^\perp = \mathbb{R}^n,
	\end{equation}  
	such that an arbitrary nonzero vector $\xi \in \mathbb{R}^n$ can be written as
	\begin{equation} \label{eq:thm:decom}
	\xi = \alpha \xi_1 + \beta \xi_2,
	\end{equation}
	where $\alpha, \beta$ are scalars, and $\xi_1 \in \mathcal{R}(V)$, $\xi_2 \in \mathcal{R}(U)^\perp$, which satisfy
	\begin{equation} \label{eq:thm:x1x2} 
	\xi_1^\top (e^{A\tau}-\mathcal{J}) B = 0, 
	\ \text{and} \
	\xi_2^\top \mathcal{J} = \xi_2^\top U V^\top = 0.
	\end{equation}
	The first equation in \eqref{eq:thm:x1x2} holds due to
	\begin{align}
		 V^\top \left(e^{A\tau}-\mathcal{J}\right) B 
		=  V^\top \left(I+\sum_{k=1}^{\infty} 
		\dfrac{A^k\tau^k}{k!} - \mathcal{J}\right) B   
		=  (V^\top  -  V^\top U V^\top)  B = 0,
		\end{align}
		where the equations $V^\top A = 0$ and $V^\top U = I_m$ are used.
	
	Any nonzero vector $\tilde{\xi} \in \mathcal{N}(\mathcal{P}(0,t_f))$ 
	is characterized by
	\begin{equation} \label{eq:halfP}
	\tilde\xi^\top (e^{A\tau}-\mathcal{J}) B = 0, \ \forall~ \tau \in \left[0, t_f\right].
	\end{equation} 
	With the decomposition of the vector $\tilde\xi$ as in \eqref{eq:thm:decom}, we rewrite \eqref{eq:halfP} as
	\begin{align} \label{eq:thm:mu2}
	\tilde\xi^\top (e^{A\tau}-\mathcal{J}) B 
	=  \alpha \tilde\xi_1^\top (e^{A\tau}-\mathcal{J}) B + \beta \tilde\xi_2^\top(e^{A\tau}-\mathcal{J}) B
	= \beta \tilde\xi_2^\top e^{A\tau} B.
	\end{align}
	Therefore, $\tilde \xi \in \mathcal{N}(\mathcal{P}(0,t_f))$ if and only if $\beta = 0$ and $\alpha \ne 0$ in \eqref{eq:thm:mu2}, namely, $\mathcal{N}(\mathcal{P}(0,t_f)) = \mathcal{R}(V)$, which yields   
	\begin{equation} \label{eq:rankn-m}
	\mathtt{rank} (\mathcal{P}(0,t_f)) = n - \mathtt{dim}(\mathcal{R}(V)) = n-m. 
	\end{equation}
	Furthermore, when $\bm{\Sigma}$ is controllable, we also obtain $\xi^\top B \ne 0$, for all nonzero vector $\xi \in \mathcal{N}(A^\top)$.  Otherwise, there will exist a nonzero vector $\xi \in \mathcal{R}(V)$ such that $\xi^\top \mathcal{J} = 0$, which implies that $\xi^\top e^{A\tau} B = \xi^\top (e^{A\tau}-\mathcal{J}) B = 0$. This contradicts \eqref{eq:xiPxi}. 
	
	\textit{Sufficiency}: Note that any nonzero vector $\xi \in \mathbb{R}^n$ can be decomposed as a linear combination of $\xi_1 \in \mathcal{R}(V)$ and $\xi_2 \in \mathcal{R}(U)^\perp$ as in \eqref{eq:thm:decom}.
	Since $\mathcal{R}(V)$ is in the nullspace of $\mathcal{P}(0,t_f)$, and $\mathtt{dim}(\mathcal{R}(V)) = m$, the rank of $\mathcal{P}(0,t_f)$ then implies that 
	\begin{equation}
	\xi_2^\top (e^{A\tau}-\mathcal{J}) B \ne 0, \ \forall~ \xi_2 \in \mathcal{R}(U)^\perp.
	\end{equation}
	It follows from \eqref{eq:thm:x1x2} that
	$
	\xi_2^\top  e^{A\tau} B \ne \xi_2^\top \mathcal{J} B = 0.
	$
	Moreover, 
	\begin{equation}\label{eq:thm:x1x22}
	\xi_1^\top  e^{A\tau} B = \xi_1^\top (e^{A\tau}-\mathcal{J} + \mathcal{J}) B = \xi_1^\top \mathcal{J}B.
	\end{equation}
	Observe that $\mathcal{J} B \ne 0$ is sufficient for $V^\top B \ne 0$. Thus, \eqref{eq:thm:x1x22} is nonzero for all $\xi_1 \in \mathcal{R}(V)$ since $V^\top \mathcal{J} B  = V^\top UV^\top B = V^\top B \ne 0$. Consequently, we obtain $\xi^\top e^{A\tau} B \ne 0$, for any nonzero vector $\xi$, i.e., ${P}_s (0,t_f)$ is positive definite. It means that $\bm{\Sigma}$ is controllable. 
	
	Finally, the first statement in the theorem is obtained as $t_f \rightarrow \infty$. The proof of the observability part follows a dual statement, Hence, the details are omitted here.  
\end{proof}

Moreover, the proposed pseudo Gramians are also relevant to the minimum input and output energy of a semistable system $\bm{\Sigma}$.
\begin{thm} \label{thm:energy}
	Consider the semistable system $\bm{\Sigma}$ and its pseudo controllability and observability Gramians $\mathcal{P}$ and $\mathcal{Q}$, respectively.
	\begin{enumerate}
		\item  If $(A, B)$ is controllable, then the least input energy required to steer the system state from $0$ to $x_0 \in \mathcal{N}(A)^\perp$ in infinite time is given by
		\begin{equation}  \label{eq:Lc}
		L_c(x_0) = \min\left\{\int_{-\infty}^{0} \| u(\tau)\|^2 \mathrm{d} \tau \right\}= x_0^\top \mathcal{P}^\dagger x_0,
		\end{equation}
		where $\mathcal{P}^\dagger$ is the pseudoinverse of $\mathcal{P}$, and $u(\tau)\in \mathcal{L}_2$, $x(-\infty)=0$, $x(0) = x_0 \in \mathcal{N}(A)^\perp$.
		
		\item If $(C, A)$ is observable, then the energy of the outputs produced by a given initial state $x_0 \in \mathcal{N}(A)^\perp$ and zero
		input is
		\begin{equation} \label{eq:Lo}
		L_o(x_0) = \int_{0}^{\infty} \| y(\tau) \|^2 \mathrm{d}\tau = x_0^\top \mathcal{Q} x_0, 
		\end{equation}
		with $x(0) = x_0 \in \mathcal{N}(A)^\perp$, $u(\tau) = 0$, $\forall~\tau \geq 0$.
	\end{enumerate} 
\end{thm}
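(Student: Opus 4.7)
The proof of both statements rests on the decomposition $e^{A\tau} = (e^{A\tau}-\mathcal{J}) + \mathcal{J}$ together with the identities $\mathcal{J}(e^{A\tau}-\mathcal{J}) = (e^{A\tau}-\mathcal{J})\mathcal{J} = 0$ and $\mathcal{J}e^{A\tau} = e^{A\tau}\mathcal{J} = \mathcal{J}$ already established in the proof of Theorem \ref{thm:GenGram}. My plan is to handle the observability part first since it is essentially a direct calculation, and then address the input-energy part by a Lagrange-multiplier argument.

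For the observability statement, I write $y(\tau) = C(e^{A\tau}-\mathcal{J})x_0 + C\mathcal{J}x_0$. The second summand is constant in $\tau$, so finiteness of $L_o(x_0)$ forces $C\mathcal{J}x_0=0$; since $\mathcal{J}x_0\in\mathcal{N}(A)$ and observability via Theorem \ref{thm:controlrank}(2) gives $C\xi\neq 0$ for all nonzero $\xi\in\mathcal{N}(A)$, we conclude $\mathcal{J}x_0=0$. Expanding $\|y(\tau)\|^2$ and integrating, the constant and cross terms vanish and one is left with $L_o(x_0) = \int_0^\infty x_0^\top(e^{A^\top\tau}-\mathcal{J}^\top)C^\top C(e^{A\tau}-\mathcal{J})x_0\,d\tau$, which is $x_0^\top \mathcal{Q}x_0$ by Definition \ref{defn:ConGram}.

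For the controllability statement, I would introduce a Lagrange multiplier $\lambda \in \mathbb{R}^n$ for the terminal constraint $\int_{-\infty}^0 e^{-A\tau}Bu(\tau)\,d\tau=x_0$. Stationarity of the augmented cost gives $u^*(\tau) = B^\top e^{-A^\top\tau}\lambda$. Because $e^{-A^\top\tau}\to\mathcal{J}^\top$ as $\tau\to-\infty$, membership $u^*\in\mathcal{L}_2(-\infty,0)$ forces $B^\top\mathcal{J}^\top\lambda=0$, and by the controllability criterion in Theorem \ref{thm:controlrank}(1) this further implies $\mathcal{J}^\top\lambda=0$. Rewriting $u^*(\tau)=B^\top(e^{-A^\top\tau}-\mathcal{J}^\top)\lambda$, the change of variables $s=-\tau$ gives $\|u^*\|_2^2=\lambda^\top\mathcal{P}\lambda$. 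Substituting $u^*$ back into the reachability constraint, again splitting $e^{-A\tau}=(e^{-A\tau}-\mathcal{J})+\mathcal{J}$, and using the hypothesis $x_0\in\mathcal{N}(A)^\perp$ to project out the $\mathcal{J}$-component (which lies in $\mathcal{N}(A)$), the constraint collapses to $\mathcal{P}\lambda=x_0$. The minimum-norm solution is $\lambda=\mathcal{P}^\dagger x_0$, producing $L_c(x_0)=x_0^\top\mathcal{P}^\dagger x_0$.

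The delicate step is Part 1: stationarity yields only a critical point, so one must argue that the non-decaying $\mathcal{J}$-contributions in the reachability equation really do cancel against the constraint $x_0\in\mathcal{N}(A)^\perp$, and that the candidate $u^*$ achieves the infimum over all admissible inputs. A cleaner alternative, which I would use as a fallback, is to pass through the finite-horizon problem: use the classical identity $\min\int_{-T}^0\|u\|^2=x_0^\top P(T)^{-1}x_0$ with $P(T)=\int_0^T e^{As}BB^\top e^{A^\top s}\,ds$, insert the similarity decomposition $A=\mathcal{U}\,\mathrm{blkdiag}(0_m,\bar A)\,\mathcal{U}^{-1}$ with $\bar A$ Hurwitz from \eqref{eq:decompostion}, and pass $T\to\infty$ via block Schur-complement inversion, showing that $x_0^\top P(T)^{-1}x_0 \to x_0^\top\mathcal{P}^\dagger x_0$ on the subspace determined by the hypothesis.
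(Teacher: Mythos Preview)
Your approach to Part 1 differs genuinely from the paper's. Rather than Lagrange multipliers, the paper applies the similarity transformation $z=\mathcal{U}^{-1}x$ from \eqref{eq:decompostion} to split the dynamics into a static block $z_1$ and an asymptotically stable block $\dot z_2=\bar A z_2+\bar V^\top B u$ with Hurwitz $\bar A$. Once $z_1(0)=0$, steering $x$ from $0$ to $x_0$ is exactly steering $z_2$ from $0$ to $\bar V^\top x_0$, so the classical stable-case formula gives $L_c=x_0^\top \bar V(\bar V^\top \mathcal{P}\bar V)^{-1}\bar V^\top x_0$; the paper then verifies the four Moore--Penrose conditions to identify $\bar V(\bar V^\top \mathcal{P}\bar V)^{-1}\bar V^\top$ with $\mathcal{P}^\dagger$. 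This route completely sidesteps the delicate cancellation you flag, because the non-decaying $\mathcal{J}$-directions are removed by the coordinate change before any variational argument is made. Your finite-horizon fallback would also succeed but is more laborious than this direct reduction.

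Your Lagrange-multiplier argument has a concrete gap at ``the constraint collapses to $\mathcal{P}\lambda=x_0$.'' Splitting the reachability identity yields $x_0=\mathcal{P}\lambda+w$ with $w\in\mathsf{im}(\mathcal{J})=\mathcal{N}(A)$, and you propose to annihilate $w$ by orthogonally projecting onto $\mathcal{N}(A)^\perp$. But $\mathcal{P}\lambda$ lies in $\ker(V^\top)=\mathcal{N}(A^\top)^\perp$, not in $\mathcal{N}(A)^\perp$, so that projection does not leave $\mathcal{P}\lambda$ fixed. The clean extraction is to left-multiply by $V^\top$: since $V^\top\mathcal{P}=0$ and $w=U\alpha$, one gets $\alpha=V^\top x_0$, hence $w=\mathcal{J}x_0$; so $w=0$ precisely when $x_0\in\mathcal{N}(A^\top)^\perp$. (The paper's own step ``$z_1(0)=0$'' tacitly relies on the same condition, so the subtlety is in the statement, not unique to your route.) For Part 2 your computation matches the paper's, but the phrasing ``finiteness of $L_o(x_0)$ forces $C\mathcal{J}x_0=0$'' is circular, since finiteness is the conclusion; derive $\mathcal{J}x_0=0$ directly from the hypothesis on $x_0$, as the paper does, and the identity $L_o(x_0)=x_0^\top\mathcal{Q}x_0$ follows immediately.
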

\begin{proof}
	First, the controllability energy function $L_c(x_0)$ in \eqref{eq:Lc} is proven. Consider a coordinate transformation $z(t): = \mathcal{U}^{-1} x(t)$, with $\mathcal{U}^{-1}$ in \eqref{eq:decompostion}. Then, we obtain  
	\begin{equation} \label{eq:transf}
	\begin{bmatrix}
	\dot{z}_1(t) \\ \dot{z}_2(t)
	\end{bmatrix}
	= \begin{bmatrix}
	0 & 0 \\ 0 & \bar{A}
	\end{bmatrix}
	\begin{bmatrix}
	\dot{z}_1(t) \\ \dot{z}_2(t)
	\end{bmatrix}
	+ 
	\begin{bmatrix}
	V^\top B \\ \bar{V}^\top B
	\end{bmatrix} u(t),
	\end{equation}
	with $z_1(t) = V x(t) \in \mathbb{R}^m$, and $z_2(t) = \bar{V} x(t) \in \mathbb{R}^{n-m}$. 
	Note that the subsystem with the state $z_2(t)$ is asymptotically stable due to the Hurwitz matrix $\bar{A}$, and its controllability Gramian is given as
	\begin{equation}
	\bar{\mathcal{P}}: = \int_{0}^{\infty} e^{\bar{A} \tau} \bar{V}^\top B B^\top \bar{V} e^{\bar{A}^\top\tau} \mathrm{d} \tau = \bar{V}^\top \mathcal{P} \bar{V},
	\end{equation}
	where the latter equation is obtained by multiply $\bar{V}^\top$ and $\bar{V}$ to the left and right sides of \eqref{defn:P}, respectively.
	
	For any $x_0 \in \mathcal{N}(A)^\perp$, we have $z_1(0) = 0$ and $z_2(0) = \bar{V}^\top x_0$. Thus, the input energy $L_c(x_0)$ that required to steer $x(t)$ from $0$ to $x_0 \in \mathcal{N}(A)^\perp$ is equivalent to the energy needed to steer $z_2(t)$ the state from $z_2(-\infty) = 0$ to $z_2(0)$.  Therefore, it follows from \cite{antoulas2005approximation} that
	\begin{align} \label{eq:Lc1}
	L_c(x_0) =  z_2(0)^\top \bar{\mathcal{P}}^{-1} z_2(0) 
	=  x_0^\top \bar{V} (\bar{V}^\top \mathcal{P}\bar{V})^{-1} \bar{V}^\top x_0.
	\end{align} 
	 We then show that $\mathcal{P}^\dagger: = \bar{V} (\bar{V}^\top \mathcal{P}\bar{V})^{-1} \bar{V}^\top$ is the pseudoinverse of $\mathcal{P}$. Consider the similarity transformation in \eqref{eq:decompostion}, where $U V^\top + \bar{U} \bar{V}^\top = \mathcal{U} \mathcal{U}^{-1} = I$, and $V^\top \mathcal{P} = 0$. The following Moore-Penrose conditions are verified.
		\begin{align}
		\mathcal{P} \mathcal{P}^\dagger \mathcal{P} & = (U V^\top + \bar{U} \bar{V}^\top) \mathcal{P}\bar{V} (\bar{V}^\top \mathcal{P}\bar{V})^{-1} \bar{V}^\top \mathcal{P}  
		 = \bar{U}\bar{V}^\top \mathcal{P} = (I - UV^\top) \mathcal{P} = \mathcal{P},
		\\
		\mathcal{P}^\dagger \mathcal{P}  \mathcal{P}^\dagger & = \bar{V} (\bar{V}^\top \mathcal{P}\bar{V})^{-1} \bar{V}^\top  \mathcal{P}  \bar{V} (\bar{V}^\top \mathcal{P}\bar{V})^{-1} \bar{V}^\top = \mathcal{P}^\dagger
		\\
		(\mathcal{P}^\dagger \mathcal{P})^\top & = \mathcal{P}\bar{V} (\bar{V}^\top \mathcal{P}\bar{V})^{-1} \bar{V}^\top = \mathcal{P} \mathcal{P}^\dagger 
		\\
		(\mathcal{P} \mathcal{P}^\dagger)^\top & =  \bar{V} (\bar{V}^\top \mathcal{P}\bar{V})^{-1} \bar{V}^\top \mathcal{P} = \mathcal{P}^\dagger \mathcal{P}  
		\end{align} 
		Thus, $\mathcal{P}^\dagger$ the Moore-Penrose inverse of $\mathcal{P}$, which leads to \eqref{eq:Lc} from \eqref{eq:Lc1}. 
	
	Next, we derive the observability energy function $L_o(x_0)$ as follows. With $y(\tau) = C e^{A \tau} x_0$, we obtain
	\begin{equation}
	L_o(x_0) = \int_{0}^{\infty} x_0^\top e^{A^\top \tau} C^\top C e^{A \tau} x_0 \mathrm{d} \tau,
	\end{equation}
	which is equal to $ x_0^\top \mathcal{Q} x_0$, since $\mathcal{J} x_0 = U V^\top x_0 = 0$, for all 
	 $x_0 \in \mathcal{N}(A)^\perp = \mathcal{R}(U)^\perp$.
\end{proof}

Next, we provide a sufficient and necessary condition for the semistable system \eqref{syss} being in the $\mathcal{H}_2$  and $\mathcal{H}_\infty$ space. 
\begin{thm} \label{thm:trace}
	Consider the semistable system \eqref{syss}, and  denote $  
	\eta(s) = C\left( sI_n - A \right)^{-1}  B
	$. We have $\bm{\Sigma} \in \mathcal{H}_2$ and $\bm{\Sigma} \in \mathcal{H}_\infty$ if and only if 
	\begin{equation} \label{eq:CJB=0}
	C \mathcal{J} B = 0
	\end{equation}
	Furthermore, let \eqref{eq:CJB=0} hold, then
	\begin{equation}
	\lVert \eta(s) \rVert_{\mathcal{H}_2}^2 = \mathtt{tr} (C \mathcal{P} C^\top) = \mathtt{tr} (B^\top \mathcal{Q} B),
	\end{equation}
	and $\lVert \eta(s) \rVert_{\mathcal{H}_\infty} \leq \gamma$ if there exist $\gamma>0$ and $\mathcal{K} \succcurlyeq 0$ satisfying
	\begin{align}
	\begin{bmatrix}
	A^\top \mathcal{K} + \mathcal{K} A  & \star & \star   \\
	B^\top \mathcal{K} & -\gamma I & \star \\
	C (I-\mathcal{J})   &  0 & -\gamma  I
	\end{bmatrix}\preccurlyeq 0, 
	\label{eq:LMI}
	\\ 
	\mathcal{J}^\top \mathcal{K}  \mathcal{J} = 0.
	\label{eq:JKJ} 
	\end{align}
\end{thm}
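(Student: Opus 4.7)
The plan is to reduce all three assertions to standard results on the Hurwitz block $\bar{A}$ appearing in the spectral decomposition \eqref{eq:decompostion}. Writing $A = U\cdot 0\cdot V^\top + \bar{U}\bar{A}\bar{V}^\top$, one obtains
\begin{equation*}
(sI-A)^{-1} \;=\; \tfrac{1}{s}UV^\top + \bar{U}(sI-\bar{A})^{-1}\bar{V}^\top \;=\; \tfrac{1}{s}\mathcal{J} + \bar{U}(sI-\bar{A})^{-1}\bar{V}^\top,
\end{equation*}
so $\eta(s)=\tfrac{1}{s}C\mathcal{J}B + C\bar{U}(sI-\bar{A})^{-1}\bar{V}^\top B$. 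The second summand is stable and strictly proper, whereas the first is an isolated pole at the origin. Hence both $\eta\in\mathcal{H}_2$ and $\eta\in\mathcal{H}_\infty$ are equivalent to $C\mathcal{J}B=0$, which establishes \eqref{eq:CJB=0}.

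Under \eqref{eq:CJB=0}, to prove the $\mathcal{H}_2$ identity I would first check that $\bar{\mathcal{P}}:=\bar{V}^\top\mathcal{P}\bar{V}$ and $\bar{\mathcal{Q}}:=\bar{U}^\top\mathcal{Q}\bar{U}$ are the standard controllability and observability Gramians of the Hurwitz realization $(\bar{A},\bar{V}^\top B, C\bar{U})$. This follows by pre/post-multiplying \eqref{eq:LyapLike} (resp. \eqref{eq:LyapLikeQ}) by $\bar{V}^\top,\bar{V}$ (resp. $\bar{U}^\top,\bar{U}$) and using the identities $\bar{V}^\top A=\bar{A}\bar{V}^\top$, $A\bar{U}=\bar{U}\bar{A}$, together with $\bar{V}^\top(I-\mathcal{J})=\bar{V}^\top$ and $(I-\mathcal{J})\bar{U}=\bar{U}$, which all come directly from \eqref{eq:UVcond}. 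The classical formula then yields $\lVert\eta\rVert_{\mathcal{H}_2}^2 = \mathtt{tr}(C\bar{U}\bar{\mathcal{P}}\bar{U}^\top C^\top)=\mathtt{tr}(B^\top\bar{V}\bar{\mathcal{Q}}\bar{V}^\top B)$. The last step is to show $\bar{U}\bar{\mathcal{P}}\bar{U}^\top=\mathcal{P}$ and $\bar{V}\bar{\mathcal{Q}}\bar{V}^\top=\mathcal{Q}$: combining $\bar{U}\bar{V}^\top=I-\mathcal{J}$ with the implication ``$X\succeq 0$ and $MXM^\top=0$ force $MX=0$'', applied to \eqref{eq:VPV} and \eqref{eq:UQU}, gives $\mathcal{J}\mathcal{P}=0$ and $\mathcal{Q}\mathcal{J}=0$, whence $(I-\mathcal{J})\mathcal{P}(I-\mathcal{J})^\top=\mathcal{P}$ and $(I-\mathcal{J}^\top)\mathcal{Q}(I-\mathcal{J})=\mathcal{Q}$.

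For the $\mathcal{H}_\infty$ bound I would apply the congruence $T=\mathtt{blkdiag}(\bar{U},I,I)$ to \eqref{eq:LMI}. The $(1,1)$ block becomes $\bar{A}^\top\bar{\mathcal{K}}+\bar{\mathcal{K}}\bar{A}$ with $\bar{\mathcal{K}}:=\bar{U}^\top\mathcal{K}\bar{U}\succeq 0$; the $(3,1)$ block collapses to $C\bar{U}$; and for the $(2,1)$ block, \eqref{eq:JKJ} combined with $\mathcal{K}\succeq 0$ gives $\mathcal{K}U=0$, whence $\bar{U}^\top\mathcal{K}B=\bar{U}^\top\mathcal{K}\bar{U}\bar{V}^\top B=\bar{\mathcal{K}}\bar{V}^\top B$. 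The transformed inequality is then precisely the classical bounded-real-lemma LMI for the Hurwitz realization $(\bar{A},\bar{V}^\top B, C\bar{U})$ at level $\gamma$, so the BRL yields $\lVert\eta\rVert_{\mathcal{H}_\infty}\leq\gamma$.

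The main technical obstacle is the repeated invocation of the implication ``$X\succeq 0$ and $MXM^\top=0$ force $MX=0$'', which is what converts the degenerate quadratic constraints \eqref{eq:VPV}, \eqref{eq:UQU}, and \eqref{eq:JKJ} into the linear identities $\mathcal{J}\mathcal{P}=0$, $\mathcal{Q}\mathcal{J}=0$, $\mathcal{K}U=0$ that make the spectral decomposition collapse every full-state statement onto the $(n-m)$-dimensional Hurwitz block; once those are in hand, the remainder is a routine application of the classical $\mathcal{H}_2$ trace formula and of the bounded real lemma.
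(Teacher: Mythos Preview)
Your proposal is correct and follows essentially the same route as the paper: split $\eta(s)=\tfrac{1}{s}C\mathcal{J}B + C\bar{U}(sI-\bar{A})^{-1}\bar{V}^\top B$ via \eqref{eq:decompostion}, then reduce both the $\mathcal{H}_2$ trace formula and the $\mathcal{H}_\infty$ LMI to their classical counterparts on the Hurwitz realization $(\bar{A},\bar{V}^\top B,C\bar{U})$. The only variations are mechanical---the paper obtains $\mathtt{tr}(C\mathcal{P}C^\top)=\lVert\eta\rVert_{\mathcal{H}_2}^2$ directly from the impulse response (noting $Ce^{A\tau}B=C(e^{A\tau}-\mathcal{J})B$ under \eqref{eq:CJB=0}) rather than via the intermediate $\bar{\mathcal{P}}=\bar{V}^\top\mathcal{P}\bar{V}$, and it applies the invertible congruence $\mathtt{blkdiag}(\mathcal{U}^\top,I,I)$ instead of your tall $\mathtt{blkdiag}(\bar{U},I,I)$---but both arguments rest on the same identities ($\mathcal{K}U=0$ from \eqref{eq:JKJ}, $A\bar{U}=\bar{U}\bar{A}$, $(I-\mathcal{J})\bar{U}=\bar{U}$) and land on the same bounded-real-lemma inequality.
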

\begin{proof}
	Consider a coordinate transformation $z(t): = \mathcal{U}^{-1} x(t)$, where $\mathcal{U}^{-1}$ is defined in \eqref{eq:decompostion}. We then obtain \eqref{eq:transf} and $y(t) = \begin{bmatrix}
	C U & C \bar{U}
	\end{bmatrix} z(t)$. 
	Thereby, the transfer function of \eqref{syss} is written as
	\begin{equation}
	\eta(s) = \bar{C}(sI - \bar{A})^{-1} \bar{B} + \dfrac{1}{s} C \mathcal{J} B,
	\end{equation}
	with $\bar C = C \bar{U}$ and $\bar B = \bar{V}^\top B$.
	Note that $C \bar{U}(sI - \bar{A})^{-1} \bar{V}^\top B$ is asymptotically stable. Thus, $\eta(s) \in \mathcal{H}_2$ (or $ \mathcal{H}_\infty$) if and only if \eqref{eq:CJB=0} holds.

	Let $g(\tau): = C e^{A\tau}  B$ be the impulse response of \eqref{syss}. It follows from \cite{antoulas2005approximation} that
	\begin{equation} \label{eq:H2int}
	\begin{split}
	\lVert \eta(s) \rVert_{\mathcal{H}_2}^2 &= \mathtt{tr} \left(\int_{0}^{\infty} g(\tau)^\top g(\tau) \mathrm{d}\tau\right),
	\end{split}
	\end{equation}
	which is well-defined if and only if $g(\tau)$ is absolutely integrable, namely, in this case,
	\begin{equation}
	\lim\limits_{ \tau \rightarrow \infty}
	g(\tau) =  C \left(\lim\limits_{ \tau \rightarrow \infty}e^{A\tau}\right)  B = C \mathcal{J} B = 0,
	\end{equation}
	which then immediately yields
	\begin{equation*}
	\mathtt{tr} (C \mathcal{P} C^\top) =  \mathtt{tr} (B^\top \mathcal{Q} B) =
	\mathtt{tr} \left(\int_{0}^{\infty} g(\tau)^\top g(\tau) \mathrm{d}\tau\right).
	\end{equation*}
	
	Next, we derive the $\mathcal{H}_\infty$ norm of \eqref{syss}. If \eqref{eq:CJB=0} is satisfied, $\| \eta(s) \|_{\mathcal{H}_\infty} = \| \bar{C}(sI - \bar{A})^{-1} \bar{B} \|_{\mathcal{H}_\infty}$. By the well-known bounded real lemma \cite{scherer2005linear}, $\| \eta(s) \|_{\mathcal{H}_\infty} \leq \gamma$, if there exists $K \succ 0$ satisfies
	\begin{equation} \label{eq:Riccati1}
	\bar{A}^\top K + K \bar A + \bar C^\top \bar C + \gamma^2 K \bar B\bar B^\top K \preccurlyeq 0.
	\end{equation}
	Note that due to \eqref{eq:JKJ}, 
	\begin{equation}
	\mathcal{U}^\top \mathcal{K} \mathcal{U} = \begin{bmatrix}
	U^\top \mathcal{K} U & U^\top \mathcal{K} \bar U \\
	\bar U^\top \mathcal{K} U & \bar U^\top \mathcal{K} \bar U
	\end{bmatrix} = 
	\begin{bmatrix}
	0 & 0 \\
	0 & \bar K
	\end{bmatrix},
	\end{equation}
	with $\bar K = \bar U^\top \mathcal{K} \bar U$.
	Moreover, we have
	\begin{equation}
	\mathcal U^{-1} (I - \mathcal{J}) \mathcal{U} =   \begin{bmatrix}
	0 & 0  \\ 0 & I_{n-m}
	\end{bmatrix}.
	\end{equation}
	Thus, the following equations hold.
	\begin{align}
	&\mathcal{U}^\top A^\top \mathcal{K}\mathcal{U} =  (\mathcal{U}^{-1}A\mathcal{U})^\top \mathcal{U}^\top\mathcal{K} \mathcal{U} = \begin{bmatrix}
	0 & 0 \\ 0 & \bar{A}^\top  \bar{K} 
	\end{bmatrix}, 
	\\
	&\mathcal{U}^\top \mathcal{K} B =  \mathcal{U}^\top \mathcal{K} \mathcal{U}  \mathcal{U}^{-1} B 
	=  
	\begin{bmatrix}
	0 \\
	\bar{K} \bar{B} 
	\end{bmatrix},	
	\\
	&	C (I - \mathcal{J}) \mathcal{U} =  C \mathcal{U}^{-\top} \mathcal{U}^{\top} (I - \mathcal{J}) \mathcal{U} 
	= \begin{bmatrix}
	0 & \bar{C}
	\end{bmatrix}.
	\end{align}
	It leads to \eqref{eq:Riccati1} if we multiply  \eqref{eq:LMI} by $\blkdiag(\mathcal{U}^\top, I, I)$ and its transpose from the left and right simultaneously.  
	Thus, $\| \eta(s) \|_{\mathcal{H}_\infty} \leq \gamma$.
\end{proof}
\begin{coro} \label{coro:LMI}
	Consider the semistable transfer function $g(s) = C (sI - A)^{-1} B + D$. If $A$ is dissipative, i.e., $A + A^\top \preccurlyeq 0$, and $(I - \bar{V}\bar{V}^\top) B = 0$ with $\bar{V}$ defined in \eqref{eq:decompostion}, then $\| g(s) \|_{\mathcal{H}_\infty} \leq \kappa$ with $\kappa$ satisfying 
	\begin{align} \label{eq:LMI2}
	\begin{bmatrix}
	A^\top + A  & \star & \star   \\
	B^\top  & -\kappa I & \star \\
	C (I-\mathcal{J})   &  D & -\kappa  I
	\end{bmatrix}\preccurlyeq 0,
	\end{align}
\end{coro}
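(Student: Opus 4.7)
The plan is to reduce the LMI \eqref{eq:LMI2} to the standard bounded real lemma applied to the Hurwitz sub-realization exposed by the decomposition \eqref{eq:decompostion}.

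\textbf{Step 1 (basis alignment).} First, I would exploit the dissipativity of $A$ to show that $\mathcal{N}(A) = \mathcal{N}(A^\top)$: for any $x \in \mathcal{N}(A)$, $x^\top(A+A^\top)x = 0$, and since $-(A+A^\top)\succeq 0$ this forces $(A+A^\top)x = 0$ and hence $A^\top x = 0$; the reverse inclusion is symmetric. Consequently, in \eqref{eq:decompostion} we may take $U = V$ to be an orthonormal basis of $\mathcal{N}(A)$ and $\bar{U} = \bar{V}$ an orthonormal basis of its orthogonal complement, so that $\mathcal{U}=[U,\bar{U}]$ is orthogonal, $\mathcal{J} = UU^\top$ is the symmetric orthogonal projector onto $\mathcal{N}(A)$, and $\bar{V}\bar{V}^\top = I - \mathcal{J}$. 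The hypothesis $(I-\bar{V}\bar{V}^\top)B = 0$ therefore reads $\mathcal{J} B = 0$, which in particular gives $C\mathcal{J}B = 0$. By Theorem \ref{thm:trace}, $g(s)\in\mathcal{H}_\infty$, and the decomposition used in the proof of Theorem \ref{thm:trace} collapses to $g(s)=\bar{C}(sI-\bar{A})^{-1}\bar{B} + D$ with $\bar{A}$ Hurwitz, $\bar{B}=\bar{V}^\top B$, and $\bar{C} = C\bar{U}$.

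\textbf{Step 2 (congruence).} Next, I would pre- and post-multiply \eqref{eq:LMI2} by $\blkdiag(\mathcal{U}^\top, I_p, I_q)$ and its transpose, mirroring the congruence at the end of the proof of Theorem \ref{thm:trace}. Using $AU = A^\top U = 0$ and $U^\top B = 0$ (which follows from $\mathcal{J}B = 0$), together with the orthogonality of $\mathcal{U}$, one finds
\begin{equation*}
\mathcal{U}^\top(A+A^\top)\mathcal{U} = \blkdiag(0,\ \bar{A}+\bar{A}^\top), \quad \mathcal{U}^\top B = \begin{bmatrix}0 \\ \bar{B}\end{bmatrix}, \quad C(I-\mathcal{J})\mathcal{U} = \begin{bmatrix}0 & \bar{C}\end{bmatrix},
\end{equation*}
so the transformed matrix is block-diagonal with a zero block of size $m$ and nontrivial part
\begin{equation*}
\begin{bmatrix}\bar{A}^\top + \bar{A} & \bar{B} & \bar{C}^\top\\ \bar{B}^\top & -\kappa I & D^\top\\ \bar{C} & D & -\kappa I\end{bmatrix}\preccurlyeq 0.
\end{equation*}
This is exactly the bounded real lemma LMI for the Hurwitz realization $(\bar{A},\bar{B},\bar{C},D)$ with Lyapunov certificate $P = I_{n-m}\succ 0$, and the standard bounded real lemma \cite{scherer2005linear} then delivers $\|g(s)\|_{\mathcal{H}_\infty}\le\kappa$.

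\textbf{Main obstacle.} The key technical point is Step 1: securing the orthogonal decomposition. Without dissipativity the left and right null spaces of $A$ generally differ, $\mathcal{J}$ need not be symmetric, and $\bar{V}\bar{V}^\top$ need not equal $I-\mathcal{J}$; the hypothesis $(I-\bar{V}\bar{V}^\top)B = 0$ then fails to yield the crucial simplification $C\mathcal{J}B = 0$, and the $\mathcal{U}^\top$-congruence would no longer produce the clean block-diagonal form on which the whole reduction rests. Once the orthogonal basis is in hand, the remainder is essentially book-keeping plus a single invocation of the bounded real lemma.
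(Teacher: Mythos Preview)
Your proposal is correct and follows essentially the same route as the paper: the paper simply sets $\mathcal{K}=\bar V\bar V^{\top}$ in the (feedthrough-extended) LMI of Theorem~\ref{thm:trace} and observes that $A\mathcal{K}=A$ and $\mathcal{K}B=B$ collapse it to \eqref{eq:LMI2}, which is precisely your congruence with Lyapunov certificate $P=I_{n-m}$ read backwards. Your Step~1, making explicit that dissipativity forces $\mathcal{N}(A)=\mathcal{N}(A^{\top})$ and hence permits an orthogonal $\mathcal{U}$ with $\bar U=\bar V$, is a detail the paper uses tacitly when it writes $A\mathcal{K}=\bar U\bar A\bar V^{\top}\bar V\bar V^{\top}=A$.
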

\begin{proof}
	To characterize the $\mathcal{H}_\infty$ norm of $g(s)$ with the feedthrough term $D$, \eqref{eq:LMI} is modified, where the second entry on the bottom is replaced by $D$.
	Now, let $\mathcal{K}: = \bar{V} \bar{V}^\top$, which satisfies \eqref{eq:JKJ} due to $\bar{V}^\top U = 0$. Furthermore, we have 
	\begin{equation}
	A \mathcal{K} = \bar{U} \bar{A} \bar{V}^\top  \bar{V} \bar{V}^\top = A, \ \mathcal{K} B = \bar{V}\bar{V}^\top B = B,
	\end{equation}
	which then leads to \eqref{eq:LMI2}.
\end{proof}

Note that if $A$ is Hurwitz in \eqref{syss}, i.e., $\mathcal{J} = 0$, the $\mathcal{H}_2$ norm and $\mathcal{H}_\infty$ norm are well-defined and can be characterized by the standard Gramians and a Riccati inequality, respectively \cite{antoulas2005approximation}. However, when $A$ contains semistable eigenvalues, both characterizations are not feasible any more. In contrast, Theorem~\ref{thm:trace} can be used.

\section{Directed Network Systems \& graph clustering} \label{sec:networksystem}
In this paper, the interactions among the vertices are described by a directed graph. Thereby, this section first provides necessary preliminaries on graph theory, and then introduces the model of a directed network in the form of Laplacian dynamics. Based on graph clustering, a projection framework for network structure-preserving model reduction is proposed.

\subsection{Graph Theory}

We briefly recapitulate the definitions and fundamental results from graph theory that will be used throughout this paper. For more details, we refer to e.g., \cite{Wu2007Synchronization,Wu2005Connectivity,Agaev2005Spectra}.

A digraph graph $\mathcal{G} = \left(\mathbb{V}, \mathbb{E}\right)$ is composed by a set of vertices $\mathbb{V}: = \{1,2, \cdots, n\}$ and a set of directed edges $\mathbb{E} \subseteq \mathbb{V} \times \mathbb{V}$. Each directed edge $a_{ij} = (i,j) \in \mathbb{E}$ indicates that information flows from vertex $j$ to vertex $i$. Based on the connectedness of a digraph, the following categories are defined. 
 
\begin{defn}
	A digraph $\mathcal{G}$ is {\bfseries weakly  connected} ($\mathcal{G} \in \mathbb{G}_w$) if there exists an undirected path between any $i,j \in \mathbb{V}$. Particularly, if there exists a directed path in each direction between any $i,j \in \mathbb{V}$, $\mathcal{G}$ is {\bfseries strongly connected} ($\mathcal{G} \in \mathbb{G}_s$). Furthermore, for every pair of vertices $i,j \in \mathbb{V}$, if there exists a vertex $k \in \mathbb{V}$ that can reach $i,j$ by a directed path, $\mathcal{G}$ is {\bfseries quasi strongly connected} ($\mathcal{G} \in \mathbb{G}_q$). 
\end{defn}

\begin{defn}
	A {\bfseries strongly connected component} (SCC) of a digraph $\mathcal{G}$ is a maximal strongly connected subgraph. Any digraph $\mathcal{G}$ can be partitioned into several SCCs. If a SCC only has outflows, it is then called a {\bfseries leading strongly connected component} (LSCC) \cite{Wu2007Synchronization}. 
\end{defn}
 

A digraph $\mathcal{G} \in \mathbb{G}_w$ may contains multiple LSCCs, while  $\mathcal{G} \in \mathbb{G}_q$ only has a single LSCC. 
Generally, we have 
\begin{equation}
	 \mathbb{G}_w \supset \mathbb{G}_q \supset \mathbb{G}_s.
\end{equation}
\begin{exm}
	\begin{figure}[!tp]\centering
		\includegraphics[scale=.6]{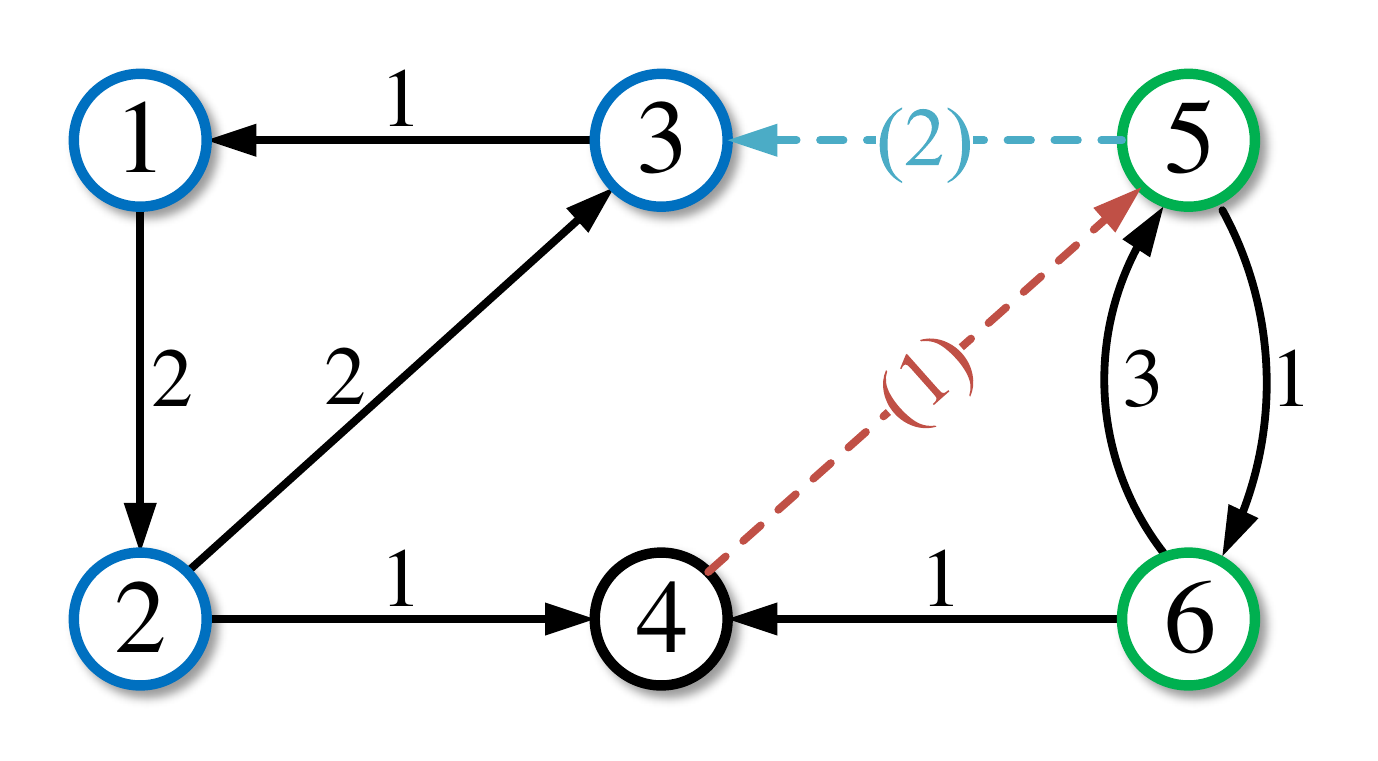}	
		\caption{Illustration of different categories of digraphs.}
		\label{fig:Ex1}
	\end{figure}
	Fig. \ref{fig:Ex1} demonstrates different types of digraphs. When only considering the edges indicated by solid arrows, $\mathcal{G} \in \mathbb{G}_w$, and there exist three SCCs in $\mathcal{G}$:
	$\{1,2,3\}$, $\{5,6\}$ and $\{4\}$, where the first two SCCs are LSCCs. Whereas, after an extra edge$a_{45}$ (dashed arrow (1)) is added, this digraph becomes quasi strongly connected, i.e., $\mathcal{G} \in \mathbb{G}_q$, which contains only two SCCs: $\{1,2,3\}$, $\{4,5,6\}$, and the first one is the LSCC. Moreover, $\mathcal{G}$ will be strongly connected, when the vertices $2$ and $4$ are also connected by $a_{24}$ represented by the dashed arrow (2).
\end{exm}
In this paper, we consider the most general case that a network is described by a weakly connected graph, and we treat the strongly connected and quasi strongly connected digraphs as special cases. 
The scenario that the digraph is disconnected, i.e., not even weakly connected, is the least interesting, since it corresponds to a set of isolated subnetworks, which can be analyzed independently.

The topology of $\mathcal{G}$ can be represented by a so-called \textit{Laplacian matrix}, which is defined as follows. Suppose $\mathcal{G}$ is edge-weighted, and denote $\mathcal{W} \in \mathbb{R}^{n \times n}$ as the \textit{weighted adjacency matrix}, whose $(i,j)$ entry, denoted by $w_{ij}$, is positive if the directed edge $a_{ij} \in \mathbb{E}$, and $w_{ij}=0$ otherwise.  
Then, the \textit{Laplacian matrix} of $\mathcal{G}$ is defined by 
\begin{equation} \label{defn:Laplacian}
	\mathcal{L} = \mathsf{diag}(\mathcal{W}\mathds{1}) - \mathcal{W},
\end{equation}
such that the $(i,j)$ entry of $\mathcal{L}$ is read as
\begin{equation} \label{defn:Laplacian1}
\mathcal{L}_{ij} = \left\{ \begin{array}{ll} 
\sum_{j=1,j\ne i}^{n} w_{ij}, & i = j,\\
-w_{ij}, & \text{otherwise.}
\end{array}
\right.
\end{equation}

The definition (\ref{defn:Laplacian}) implies that $\mathcal{L}$ at least has one zero eigenvalue, since $\mathds{1}_n \in \mathsf{ker}(\mathcal{L})$. Specifically, the algebraic (or geometric) multiplicity of the zero eigenvalues is related to the number of LSCCs in $\mathcal{G}$. 
\begin{lem} \cite{Godsil2013AlgebraicGraph,Agaev2005Spectra} \label{lem:LapSemi}
	Consider a weakly connected weighted digraph $\mathcal{G}$. The Laplacian matrix $\mathcal{L}$ has 
	semisimple eigenvalues at the origin with the multiplicity $m$, which coincides with the number of LSCCs in $\mathcal{G}$. Particularly, if $\mathcal{G} \in \mathbb{G}_s$ or $\mathcal{G} \in \mathbb{G}_q$, then $\mathcal{L}$ only has a single zero eigenvalue.
\end{lem}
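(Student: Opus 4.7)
The plan is to exploit the block structure induced by the strongly connected component (SCC) decomposition of $\mathcal{G}$, together with Perron-Frobenius-type arguments for irreducible nonnegative matrices. The goal is twofold: count the zero eigenvalues, and show that the corresponding Jordan blocks are trivial.

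First I would establish the basic spectral location. By construction $\mathcal{L}\mathds{1} = 0$, so $0$ is always an eigenvalue. Applying Gershgorin's disk theorem to $\mathcal{L}$, every eigenvalue lies in the union of discs centered at $\mathcal{L}_{ii} = \sum_{j\neq i}w_{ij}$ with radius $\sum_{j\neq i} w_{ij}$, which all lie in the closed right half plane. Hence $\mathrm{Re}(\lambda) \geq 0$ for every eigenvalue of $\mathcal{L}$, and $0$ is the only eigenvalue on the imaginary axis that can appear with multiplicity.

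Next I would put $\mathcal{L}$ into a useful block form. Decompose $\mathcal{G}$ into its SCCs $S_1,\dots,S_m,S_{m+1},\dots,S_r$ where $S_1,\dots,S_m$ are the LSCCs. Because an LSCC has no incoming edges from outside, $w_{ij}=0$ whenever $i\in S_k$ ($k\le m$) and $j\notin S_k$. Reordering the vertices so that the LSCCs come first, this yields
\begin{equation}
  \mathcal{L} \;=\; \begin{bmatrix} \mathsf{blkdiag}(\mathcal{L}_{S_1},\dots,\mathcal{L}_{S_m}) & 0 \\ \star & M \end{bmatrix},
\end{equation}
where each $\mathcal{L}_{S_k}$ is the Laplacian of the strongly connected subgraph induced by $S_k$, and $M$ collects the remaining non-LSCC rows and columns. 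Each $\mathcal{L}_{S_k}$ is irreducible with nonnegative off-diagonal sign convention after sign flip; applying Perron--Frobenius to $\mathsf{diag}(\mathcal{W}_{S_k}\mathds{1})^{-1}\mathcal{W}_{S_k}$ shows that $\mathcal{L}_{S_k}$ has a simple eigenvalue at $0$ with eigenvector $\mathds{1}_{|S_k|}$, and all other eigenvalues in the open right half plane. This already contributes exactly $m$ linearly independent eigenvectors (one per LSCC block, embedded and padded with zeros in the non-LSCC rows after solving for those coordinates; see the argument below).

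The main obstacle is showing $M$ is nonsingular, since then $\mathcal{L}$ has exactly $m$ zero eigenvalues and the semisimplicity follows. Decompose $M = L_{\mathrm{nL}} + D$, where $L_{\mathrm{nL}}$ is the Laplacian of the subgraph induced by the non-LSCC vertices and $D$ is a nonnegative diagonal whose $i$-th entry equals the total weight of edges coming into vertex $i$ from LSCC vertices. Because $\mathcal{G}$ is weakly connected and the LSCCs are precisely the sources of the condensation DAG, every non-LSCC vertex has a directed path from some LSCC, which forces at least one strictly positive diagonal entry of $D$ on every ``sink'' SCC inside the non-LSCC block, and iteratively on every other non-LSCC SCC by the path-to-LSCC property. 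Thus $M$ is an irreducibility-chained Z-matrix whose row sums are nonnegative and strictly positive on at least one vertex of every SCC of the non-LSCC subgraph. A standard weak-diagonal-dominance/M-matrix argument (or equivalently, a maximum-modulus argument applied to a hypothetical $v\in\ker M$ starting from the vertex maximizing $|v_i|$ and propagating along directed paths to a vertex with positive excess row sum) then yields $v=0$, so $M$ is invertible with $\sigma(M)\subset\{\mathrm{Re}(\lambda)>0\}$.

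Finally, to conclude semisimplicity of the zero eigenvalue of $\mathcal{L}$, observe that $\ker\mathcal{L}$ has dimension at least $m$: for each LSCC $S_k$, the vector that is $1$ on $S_k$, $0$ on the other LSCCs, and equal to $-M^{-1}\star_k$ on the non-LSCC block (where $\star_k$ is the $k$-th column-group of the $\star$ block applied to $\mathds{1}_{|S_k|}$) lies in $\ker\mathcal{L}$, and these $m$ vectors are linearly independent by their disjoint LSCC supports. The block-triangular structure together with $\det M \neq 0$ also shows algebraic multiplicity of $0$ is exactly $m$ (it is the total algebraic multiplicity coming from the $m$ simple zero eigenvalues of the $\mathcal{L}_{S_k}$). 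Geometric and algebraic multiplicities therefore coincide, proving semisimplicity. The specialization to $\mathcal{G}\in\mathbb{G}_s$ or $\mathcal{G}\in\mathbb{G}_q$ is immediate: both have exactly one LSCC, so $m=1$.
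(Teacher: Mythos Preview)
The paper does not prove this lemma; it is quoted from \cite{Godsil2013AlgebraicGraph,Agaev2005Spectra} without argument. Your approach is the standard one and is essentially correct: block-triangularise $\mathcal{L}$ by SCCs, use Perron--Frobenius on each LSCC block to get $m$ simple zeros, and show the remaining block $M$ is nonsingular so that the algebraic and geometric multiplicities of $0$ both equal $m$.

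One small correction in the $M$-nonsingularity step: your decomposition $M=L_{\mathrm{nL}}+D$ with $D_{ii}=\sum_{j\in\text{LSCCs}}w_{ij}$ does \emph{not} in general have a strictly positive entry on every SCC of the non-LSCC subgraph, because a non-LSCC SCC may receive all its inflow from other non-LSCC SCCs rather than directly from an LSCC (e.g.\ a chain $S_1\to T_1\to T_2$). The cleanest fix is to further block-triangularise $M$ along the non-LSCC SCCs $T_1,\dots,T_q$: each diagonal block $M_{T_k}$ is irreducible with row sums $\sum_{l\notin T_k}w_{il}\ge 0$, and since $T_k$ is not an LSCC at least one such row sum is strictly positive, so $M_{T_k}$ is irreducibly diagonally dominant and hence nonsingular. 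Your parenthetical maximum-modulus argument also works and is equivalent, since from any non-LSCC vertex one can walk backward along in-edges to reach a vertex with a direct in-edge from an LSCC. Also, for the LSCC blocks, the Perron--Frobenius step is cleaner if applied to $cI-\mathcal{L}_{S_k}$ for large $c$ (irreducible nonnegative with spectral radius $c$) rather than to the row-stochastic normalisation, which is not a similarity of $\mathcal{L}_{S_k}$.
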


\subsection{Directed Network Systems}

A directed network system describes the dynamics evolving over a digraph $\mathcal{G}$. Let $x_i(t) \in \mathbb{R}$ be the state of a vertex $i$, which is diffusively coupled with the other vertices as
\begin{equation}
\begin{split}
	\dot{x}_i(t) = - \sum_{j=1,j\ne i}^{n} w_{ij} \left[ {x}_i(t) - {x}_j(t)\right]  + \sum_{k=1}^{p} f_{ik} u_k(t), \\
\end{split}
\end{equation}
where $w_{ij} \in \mathbb{R}_+$ and $f_{ij} \in \mathbb{R}$ represent the weight of the edge $a_{ij}$ and the amplification of the input $u_k \in \mathbb{R}$ acting on the vertex $i$, respectively. 
Then, we describe the dynamics of all the vertices on a directed consensus network by the following linear time-invariant system
\begin{equation} \label{sys}
	\bm{\Sigma} : \left \{
	\begin{array}{l}
	\dot{x}(t) = -\mathcal{L} x(t) +  {F}u(t), \\
	y(t) = H x(t),
	\end{array}
	\right.
\end{equation}
where $x(t) \in \mathbb{R}^n$ is the collection of the vertex states. The vectors $u \in \mathbb{R}^p$ and $y \in \mathbb{R}^q$ are the external control flows and measurements. The input and output matrices $ {F}$ and $H$ then represent the distributions of the external inflows and outflows, respectively. The Laplacian matrix $\mathcal{L}$, following the definition in (\ref{defn:Laplacian}),  is associated with a \textit{weakly connected weighted} digraph $\mathcal{G}$, which reflects the diffusive coupling among the vertices of $\mathcal{G}$.
Throughout the paper, we
suppose the digraph $\mathcal{G}$ contains $n_d$ SCCs, $\{\mathcal{S}_1, \dots, \mathcal{S}_{n_d}\}$, such that $\mathcal{S}_i \cap \mathcal{S}_j = \varnothing$, for any $i,j$, and
$
 \cup_{j=1,\cdots,n_d} \mathcal{S}_i = \mathbb{V}
$. Moreover, the set $\mathbb{S}_L \subseteq \mathbb{V}$ collects all the vertices in the LSCCs of $\mathcal{G}$, and $\mathcal{S}_i \subseteq \mathbb{S}_L$ if $\mathcal{S}_i$ is a LSCC of $\mathcal{G}$.

A typical example of (\ref{sys}) is a chemical reaction network, (see e.g.,  \cite{Mirzaev2013LaplacianDynamics,Ahsendorf2014GeneRegulation,Gunawardena2012Model}), where chemical species are the vertex states $x_{i}(t)$. The directed edges represent a series of chemical reactions converting source species to target species, and the edge weights are the rate constants of the corresponding reactions. 

\begin{rem}
	The network system $\bm{\Sigma}$ is semistable due to the the Laplacian matrix $\mathcal{L}$, which is singular and reducible (i.e., $\mathcal{L}$ is similar
	via a permutation to a block upper triangular matrix). Furthermore, by Ger{\v{s}}gorin's circle theorem (see,
	e.g., \cite{Johnson1990Matrix}), the real part of each nonzero eigenvalue of $\mathcal L$ is strictly positive.
\end{rem}
 
A strongly connected digraph is called \textit{balanced} if the indegree and outdegree of each node are equal \cite{godsil2013algebraic,Wu2007Synchronization}. In e.g. the mass action kinetics chemical reaction networks \cite{Rao2015balancing,Horn1972action}, the balancing of a directed network is necessary to preclude sustained oscillations, multi-stability or other types of exotic dynamic behavior. This paper extends the definition of balanced digraphs to the weakly connected case.
\begin{defn} \label{defn:balancedGraph}
	A weakly connected digraph $\mathcal{G}$ is \textbf{generalized balanced} if each LSCC is balanced, i.e.,
	$
	\sum_{j \in \mathcal{S}_k} w_{ij} = \sum_{j \in \mathcal{S}_k} w_{ji}, 
	$
	$\forall i \in \mathcal{S}_k \subseteq \mathbb{S}_L$.
\end{defn}

The following lemma then shows that any directed network system in (\ref{sys}) can be converted to its generalized balanced form.

\begin{lem} 
	Consider a directed network system $\bm{\Sigma}$ in (\ref{sys}). There always exists an equivalent representation: 
	\begin{equation} \label{sysbal}
	\bm{\Sigma} : \left \{
	\begin{split} 
	M \dot{x}(t) &= - {L} x(t) + MFu(t), \\
	y(t) &= H x(t),
	\end{split}
	\right.
	\end{equation} 
	where $M \in \mathbb{R}^{n \times n}$ is positive diagonal such that $L: = M \mathcal{L}$ is a Laplacian matrix associating with a generalized balanced digraph.
\end{lem}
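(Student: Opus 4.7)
The plan is to construct the positive diagonal $M$ by piecing together Perron left eigenvectors on the individual LSCCs. After a permutation bringing the LSCCs first, $\mathcal{L}$ becomes block lower triangular, with the Laplacians $\mathcal{L}_1,\dots,\mathcal{L}_m$ of the LSCCs $\mathcal{S}_1,\dots,\mathcal{S}_m$ as its leading diagonal blocks: by definition an LSCC receives no inflow from outside, so for $i\in\mathcal{S}_k$ and $j\notin\mathcal{S}_k$ we have $w_{ij}=0$, whence $\mathcal{L}_{ij}=0$. Each $\mathcal{L}_k$ is itself the Laplacian of a strongly connected digraph, and applying Perron--Frobenius to the shifted nonnegative irreducible matrix $\alpha I-\mathcal{L}_k$ (any $\alpha$ larger than its spectral radius) yields a strictly positive left eigenvector $v_k>0$ with $v_k^\top\mathcal{L}_k=0$. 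I then set $\mu_i=(v_k)_i$ when $i\in\mathcal{S}_k\subseteq\mathbb{S}_L$ and $\mu_i=1$ for $i\in\mathbb{V}\setminus\mathbb{S}_L$, and take $M=\mathsf{diag}(\mu_1,\dots,\mu_n)\succ 0$.

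The verification is short. First, $L:=M\mathcal{L}$ is a bona fide Laplacian: $L\mathds{1}=M\mathcal{L}\mathds{1}=0$ and $L_{ij}=-\mu_i w_{ij}\le 0$ for $i\ne j$, so $L$ is the Laplacian of a digraph with the same edge set as $\mathcal{G}$ and rescaled weights $\tilde w_{ij}=\mu_i w_{ij}$. Second, generalized balancing in the sense of Definition~\ref{defn:balancedGraph} is equivalent, for each LSCC $\mathcal{S}_k$, to the column-sum condition $\mathds{1}^\top L_k=0$ on the principal submatrix $L_k$ indexed by $\mathcal{S}_k$; since $L_k=M_k\mathcal{L}_k$ with $M_k\mathds{1}=v_k$, this reduces to $v_k^\top\mathcal{L}_k=0$, which holds by construction. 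Third, because $M$ is invertible, left-multiplying $\dot x=-\mathcal{L}x+Fu$ by $M$ produces $M\dot x=-Lx+MFu$ with identical trajectories and outputs, giving the claimed equivalent representation \eqref{sysbal}.

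The main obstacle is justifying the strict positivity of each $v_k$. Because $\mathcal{L}_k$ has negative off-diagonal entries, Perron--Frobenius cannot be invoked directly; one instead applies it to the nonnegative shift $\alpha I-\mathcal{L}_k$, exploiting that the shift is irreducible exactly when $\mathcal{S}_k$ is strongly connected, and that the zero eigenvalue of $\mathcal{L}_k$ corresponds to the Perron root of the shift, at which the left eigenvector is strictly positive. A secondary technical point is pinning down the block-triangular form of $\mathcal{L}$: this rests on the "only outflows" property of an LSCC, which forces the off-block entries in each LSCC row of $\mathcal{W}$ to vanish and thereby makes $\mathcal{L}_k$ coincide with the Laplacian of the isolated subgraph on $\mathcal{S}_k$. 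Once these two structural facts are in hand, every remaining step is a routine computation.
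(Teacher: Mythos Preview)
Your proposal is correct and follows essentially the same route as the paper: permute so that the LSCC blocks come first, use the strictly positive left null vector $v_k$ of each LSCC Laplacian $\mathcal{L}_k$ to populate the corresponding diagonal entries of $M$, take arbitrary positive entries on $\mathbb{V}\setminus\mathbb{S}_L$, and then check that $L=M\mathcal{L}$ is a Laplacian with balanced LSCCs. The only difference is that the paper simply cites the positivity of $v_k$ from the literature, whereas you supply the Perron--Frobenius argument on the shift $\alpha I-\mathcal{L}_k$; note, however, that the right condition on $\alpha$ is $\alpha\ge\max_i(\mathcal{L}_k)_{ii}$ (to make the shift entrywise nonnegative), not ``larger than the spectral radius,'' after which $\alpha$ is automatically the Perron root because $\mathds{1}$ is a positive right eigenvector.
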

\begin{proof}
	A reducible Laplacian matrix $\mathcal{L}$ is permutation-similar to a block upper triangular matrix. Thus, there exists a permutation matrix $\mathcal{T}_\mu$ such that
	\begin{equation} \label{eq:Lblkdiag}
		\mathcal{L} = \mathcal{T}_\mu \cdot \begin{bmatrix}
		\mathcal{L}_{l1} & \cdots  & 0 & 0\\
		\vdots & \ddots & \vdots & \vdots\\
		0 & \cdots & \mathcal{L}_{lm} & 0\\
		* & \cdots &* & \mathcal{L}_{r}\\
		\end{bmatrix},
	\end{equation}
	where the first $m$ diagonal blocks, $\mathcal{L}_{li}$ ($i = 1, \cdots, m$), are the Laplacian matrices associated with the $m$ LSCCs of $\mathcal{G}$, while $\mathcal{L}_{r}$ relates to the remaining vertices in $\mathcal{G}$. It is indicated by \cite{Wu2007Synchronization,XiaodongCDC2017Digraph} that the Laplacian associated to a strongly connected digraph has a simple zero eigenvalue, whose associated left
	eigenvector has all positive entries. Therefore, there exists a positive vector $\nu_i$ such that $\mathcal{L}_{li}^T \nu_i = 0$ for each $i = 1,\cdots, m$. Then, it is verified that
	$\mathsf{diag}(\nu_i) \mathcal{L}_{li}$ represents a balanced directed subgraph, since 
	\begin{equation}
		\mathds{1}^T \mathsf{diag}(\nu_i) \mathcal{L}_{li} = 0 \ \text{and} \  \mathsf{diag}(\nu_i) \mathcal{L}_{li} \mathds{1}  = 0.
	\end{equation}
	Let 
	\begin{equation} \label{eq:M}
		M : = \mathsf{diag}\left([\nu_1^T, \cdots, \nu_m^T, \nu_r^T]\right) \cdot \mathcal{T}_\mu^T
	\end{equation}
	with $\nu_r$ an arbitrary positive vector. Then, $L: = M \mathcal{L}$ represents a generalized balanced digraph by Definition \ref{defn:balancedGraph}. 
\end{proof}

\begin{exm} \label{ex2}
	Consider the weakly connected graph in Fig. \ref{fig:Ex1}, where the weights the edges are labeled. The weighted Laplacian matrix is written as 	
	\begin{equation} \label{ex:L}
	\mathcal{L} = 
	\left[
		\begin{array}{ccc;{2pt/2pt}c;{2pt/2pt}cc}
		1  &   0  &  -1  &   0  &   0  &   0\\
		-2 &    2 &    0 &    0 &    0 &    0\\
		0  &  -2  &   2  &   0  &   0  &   0\\ \hdashline[2pt/2pt]
		0  &  -1  &   0  &   2  &   0  &   -1\\ \hdashline[2pt/2pt]
		0  &   0  &   0  &   0  &   3  & -3\\
		0  &   0  &   0  &   0   & -1  &   1
		\end{array} 
	\right].
	\end{equation}
	By (\ref{eq:M}), we choose 
	$M = \mathsf{diag}\left([2, 1, 1, \alpha, 1, 3]^T\right)$ with $\alpha$ an arbitrary positive scalar such that
	\begin{equation}
		L = M \mathcal{L} = 
		\left[	\begin{array}{ccc;{2pt/2pt}c;{2pt/2pt}cc}
			2  &   0  &  -2  &   0  &   0  &   0\\
			-2 &    2 &    0 &    0 &    0 &    0\\
			0  &  -2  &   2  &   0  &   0  &   0\\ \hdashline[2pt/2pt]
			0  &  -\alpha  &   0  &   2\alpha  &   0  &   -\alpha\\ \hdashline[2pt/2pt]
			0  &   0  &   0  &   0  &   3  & -3\\
			0  &   0  &   0  &   0   & -3  &   3
		\end{array} 
		\right]
	\end{equation}
	is a Laplacian matrix associating with a generalized balanced digraph.
\end{exm}

The introduction of the generalized balanced form (\ref{sysbal}) of directed networks is meaningful, as in the following sections, it will be employed to define the vertex clusterability and dissimilarity. Moreover, it can be seen as an extension of undirected networks in \cite{XiaodongECC2016}, where $M$ is the vertex weights and $L$ corresponds to an undirected graph.

\subsection{Projection by graph clustering} \label{sec:projection}

This subsection constructs the reduced network system in the Petrov-Galerkin framework in which the projection matrix are chosen as the characteristic matrix of graph clustering. Before proceeding, we provide some notions regarding to graph clustering \cite{XiaodongECC2016,Monshizadeh2014}.

Consider a weakly connected digraph $\mathcal{G} = \left( \mathbb{V}, \mathbb{E}\right)$. Then, a \textit{graph clustering} is to divide the vertex set $\mathbb{V}$ into $r$ nonempty and disjoint subsets, i.e.,  $\{  \mathcal{C}_1,\mathcal{C}_2,\cdots,\mathcal{C}_r\}$, where $\mathcal{C}_i$ is called a \textit{cell} (or a \textit{cluster}) of $\mathcal{G}$. 
\begin{defn}	
	The \textbf{characteristic matrix} of the clustering $\mathcal{S}$ is denoted by a binary matrix $\Pi \in \mathbb{R}^{n \times r}$, whose $(i,j)$-entry is defined by
	\begin{equation} \label{partition}
	\Pi_{ij} := \left\{ \begin{array}{ll}
	1, & \text{vertex $i \in \mathcal{C}_j$,}\\ 0, & \text{otherwise.}
	\end{array}
	\right.
	\end{equation}
\end{defn}
Clearly, the entries in each row of $\Pi$ has a single $1$ entry while all the others are $0$, which means that each vertex is included in a unique cell. Moreover, $\Pi$ satisfies
\begin{equation} \label{eq:Pi11}
	\Pi \mathds{1}_n = \mathds{1}_r. 
\end{equation}

Now, we consider the system $\bm{\Sigma}$ on digraph $\mathcal{G}$ of $n$ vertices. To formulate a reduced model of dimension  $r$, we first find a graph clustering that partitions the vertices of $\mathcal{G}$ into $r$ cells. Then, we use the characteristic matrix of the clustering to construct the projection in the Petrov-Galerkin framework, which yields a reduced network system. Specifically, the following left and right projection matrices are utilized:
\begin{equation} \label{projection}
\Pi^\dagger : = (\Pi^T N \Pi)^{-1} \Pi^T N, \text{and} \ \Pi,
\end{equation}
where $N$ is a diagonal matrix such that $\Pi^T N \Pi$ is invertible, and $\Pi^\dagger \in \mathbb{R}^{r \times n}$ is the \textit{reflexive generalized inverse} of $\Pi$ (see \cite{rao1971generalized} for the definition). Both $\Pi$ and $N$ are to be determined in the latter
sections. Wherein, $\Pi$ is formed as the clustering of $\mathcal{G}$ is selected. 

The reason that we select the pair of projection matrices in (\ref{projection}) is due to its potential to preserve a network structure. With this projection, the $r$-dimensional projected model is given as
\begin{equation} \label{sysr}
\bm{\hat{\Sigma}} : \left \{
\begin{array}{l}
\dot{z}(t) = - \hat{\mathcal{L}} z(t) + \hat{F} u(t), \\
\hat{y}(t) = \hat{H} z(t),
\end{array}
\right.
\end{equation}
where $z(t) \in \mathbb{R}^r$, and $$\hat{\mathcal{L}} := \Pi^\dagger \mathcal{L} \Pi, \ \hat{F}=\Pi^\dagger F \ \text{and} \ \hat{H} = H \Pi.$$ 
It can be verified that $\hat{\mathcal{L}} \mathds{1}_r = 0$, and $\hat{\mathcal{L}}$ has nonnegative diagonal entries and nonpositive off-diagonal elements. Thus, the reduced matrix $\hat{\mathcal{L}} \in \mathbb{R}^{r \times r}$ is a lower-dimensional Laplacian matrix representing a digraph with fewer vertices, and the reduced-order system $\bm{\hat{\Sigma}}$ models a smaller-sized weakly connected directed network. In other words, the network structure is guaranteed to be preserved. 
 
Under the projection framework (\ref{projection}), the rest of this paper investigate the structure-preserving model reduction problem of the system $\bm{\Sigma}$, formulated as follows.
\begin{prob} \label{prob:appx}
	Given a directed network system $\bm{\Sigma}$ in (\ref{sys}), find matrices $\Pi$ and $N$ such that the obtained reduced-order model $\bm{\hat{\Sigma}}$ in (\ref{sysr}) approximates the original system $\bm{\Sigma}$ in a way that $\lVert \bm{\Sigma}-\bm{\hat{\Sigma}} \rVert_{\mathcal{H}_2}$ is bounded and small.
\end{prob}

\section{Model Reduction}
\label{sec:Reduction}

The strategy of constructing a suitable reduced-order network system is discussed with two parts in this section. The first part shows the conditions to guarantee the boundedness of the approximation error $\lVert \bm{\Sigma}-\bm{\hat{\Sigma}} \rVert_{\mathcal{H}_2}$, and the second part develops an effective scheme to find an appropriate clustering such that the reduction error $\lVert \bm{\Sigma}-\bm{\hat{\Sigma}} \rVert_{\mathcal{H}_2}$ is small. 

\subsection{Clusterability}

Denote the transfer matrices of $\bm{\Sigma}$ and $\bm{\hat{\Sigma}}$ by
\begin{equation} \label{eq:transfun}
\eta(s) = H\left( sI_n + \mathcal{L} \right)^{-1}  F, \ \text{and} \ \hat{\eta}(s) 
= \hat{H} (sI_r + \hat{\mathcal{L}} )^{-1}  \hat{F},
\end{equation}
respectively. Note that both $\bm{\Sigma}$ and $\bm{\hat{\Sigma}}$ are not asymptotically stable, which means $\lVert \eta(s) \rVert_{\mathcal{H}_2}$ or $\lVert \hat{\eta}(s) \rVert_{\mathcal{H}_\infty}$ may be unbounded potentially. 
The results in \cite{XiaodongCDC2017Digraph,ishizaki2015clustereddirected} shows that when $\mathcal{G} \in \mathbb{G}_s$, even a random partition of $\mathbb{V}$ can deliver an bounded reduction error $\lVert \eta(s) - \hat{\eta}(s) \rVert_{\mathcal{H}_2}$ with a properly chosen $N$. However, such a conclusion no longer holds for more general digraphs $\mathcal{G} \in \mathbb{G}_q$ or $\mathcal{G} \in \mathbb{G}_w$. Thereby, the following definition is introduced based on the generalized balanced representation of $\bm{\Sigma}$ in (\ref{sysbal}).

\begin{defn} \label{defn:clusterable}
	In the network (\ref{sys}), the vertices $i$ and $j$ are \textbf{clusterable} if $\mathbf{e}_{ij} \in \mathsf{ker}(L)^\perp$ and $\mathbf{e}_{ij} \in \mathsf{ker}(L^T)^\perp$ simultaneously, where $L$ is defined in (\ref{sysbal}).
	Furthermore, the graph clustering of $\mathcal{G}$ is \textbf{proper} if the vertices in each cell are clusterable.
\end{defn}
Furthermore, the physical meaning of the clusterability is explained in the following lemma.
\begin{lem} \label{lem:clusterability}
	The vertices $i$ and $j$ are clusterable if and only if the following conditions hold:
	\begin{itemize}
		\item the vertices $i$ and $j$ reach consensus, i.e., when $u=0$, 
		\begin{equation*}
		\lim\limits_{t \rightarrow \infty}\left[ x_i(t) - x_j(t)\right] = 0,
		\end{equation*}
		for all initial conditions.
		
		\item the vertices $i$ and $j$ are either contained in the same LSCC or $i,j \in \mathbb{V}  \backslash  \mathbb{S}_L$.
	\end{itemize}  
\end{lem}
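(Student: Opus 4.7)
The plan is to split the clusterability biconditional into two independent equivalences, one for each of the two algebraic conditions in Definition~\ref{defn:clusterable}: I will show that $\mathbf{e}_{ij}\in\mathsf{ker}(L)^\perp$ matches the consensus statement in item~1, while $\mathbf{e}_{ij}\in\mathsf{ker}(L^\top)^\perp$ matches the LSCC membership condition in item~2.

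First, for the consensus side, I would rewrite the zero-input dynamics of $\bm{\Sigma}$ as $\dot x = -\mathcal{L} x$ with $\mathcal{L} = M^{-1}L$ and invoke semistability (Lemma~\ref{lem:LapSemi}) to conclude that $x(t)\to \mathcal{J} x(0)$ with the projector $\mathcal{J} = UV^\top$ as in (\ref{defn:convergence}), where $U$ and $V$ span the right and left nullspaces of $\mathcal{L}$ respectively and satisfy $V^\top U = I_m$. The consensus condition $\mathbf{e}_{ij}^\top x(\infty) = 0$ for every $x(0)$ is then equivalent to $\mathbf{e}_{ij}^\top U V^\top = 0$, which, since $V$ has full column rank, reduces to $\mathbf{e}_{ij}^\top U = 0$, i.e., $\mathbf{e}_{ij} \perp \mathsf{im}(U) = \mathsf{ker}(\mathcal{L}) = \mathsf{ker}(L)$, the last equality holding by invertibility of $M$.

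Next, to characterize $\mathbf{e}_{ij}\in\mathsf{ker}(L^\top)^\perp$, I would compute $\mathsf{ker}(L^\top)$ explicitly from the reducible block form (\ref{eq:Lblkdiag}). Since the zero eigenvalue of $\mathcal{L}$ has multiplicity exactly $m$ (Lemma~\ref{lem:LapSemi}) and each LSCC block $\mathcal{L}_{lk}$ already contributes one zero eigenvalue, the transient block $\mathcal{L}_r$ must be nonsingular. Hence any $\xi\in\mathsf{ker}(\mathcal{L}^\top)$ vanishes on the transient vertices, and Perron--Frobenius applied to each strongly connected block $\mathcal{L}_{lk}$ forces $\xi|_{\mathcal{S}_k}$ to be a scalar multiple of the positive left Perron vector $\nu_k$. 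Therefore $\mathsf{ker}(\mathcal{L}^\top) = \mathrm{span}\{\mu_1,\dots,\mu_m\}$, where each $\mu_k$ is supported on $\mathcal{S}_k$ and equals $\nu_k$ there. Passing to $\mathsf{ker}(L^\top) = M^{-1}\mathsf{ker}(\mathcal{L}^\top)$ and using that $M$ restricted to $\mathcal{S}_k$ equals $\mathsf{diag}(\nu_k)$ by the construction (\ref{eq:M}), the rescaling collapses each $\mu_k$ into the indicator vector $\mathds{1}_{\mathcal{S}_k}$. Consequently $\mathbf{e}_{ij}\in\mathsf{ker}(L^\top)^\perp$ iff $\mathds{1}_{\mathcal{S}_k}(i) = \mathds{1}_{\mathcal{S}_k}(j)$ for every $k$, which is exactly the condition that $i$ and $j$ lie in the same LSCC or both belong to $\mathbb{V}\setminus\mathbb{S}_L$.

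The hard part will be the clean identification of $\mathsf{ker}(L^\top)$ with the span of LSCC indicator vectors. Three ingredients have to mesh: the block-triangular structure of the reducible Laplacian, Perron--Frobenius positivity applied to each LSCC block, and the specific rescaling in (\ref{eq:M}) that transforms the Perron vectors $\nu_k$ into the all-ones pattern $\mathds{1}_{\mathcal{S}_k}$ under $M^{-1}$. Once this is in place, combining the two equivalences yields the lemma.
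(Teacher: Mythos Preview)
Your proposal is correct and follows essentially the same route as the paper: both split the biconditional into two independent equivalences, identify $\mathsf{ker}(L)=\mathsf{im}(U)$ and $\mathsf{ker}(L^\top)=\mathsf{im}(M^{-1}V)$, and then translate $\mathbf{e}_{ij}^\top U=0$ into the consensus statement via the limiting projector $\mathcal{J}=UV^\top$ and $\mathbf{e}_{ij}^\top M^{-1}V=0$ into the LSCC membership condition. The only difference is one of exposition: the paper handles the second part tersely by citing that the rows of $V$ vanish on $\mathbb{V}\setminus\mathbb{S}_L$ and invoking the generalized balanced property, whereas you work out $\mathsf{ker}(L^\top)=\mathrm{span}\{\mathds{1}_{\mathcal{S}_1},\dots,\mathds{1}_{\mathcal{S}_m}\}$ explicitly from the block-triangular form, the nonsingularity of $\mathcal{L}_r$, and the rescaling in~(\ref{eq:M}). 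Your version is more self-contained but otherwise identical in substance.
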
 
\begin{proof}
	Consider the decomposition of $\mathcal{L}$ as in (\ref{eq:decompostion}), and we obtain 
	\begin{equation} \label{eq:UVdef}
	\mathcal{L} U = 0, \ V^T \mathcal{L} = 0, \ \text{and} \ V^T U = I_m,
	\end{equation}
	where $U, V \in \mathbb{R}^{n \times m}$ with $m$ the algebraic multiplicity of the zero eigenvalue of $\mathcal{L}$, i.e., the number of LSCCs in $\mathcal{G}$ (see Lemma \ref{lem:LapSemi}). 
	It follows from $L = M \mathcal{L}$ that 
	\begin{equation}
		\mathsf{im}(U) = \mathsf{ker} (L), \ \mathsf{im}(M^{-1}V) =  \mathsf{ker} (L^T)
	\end{equation}
	with $M$ given in (\ref{eq:M}). By Definition \ref{defn:clusterable}, the clusterability of vertices $i$ and $j$ is thus equivalent to 
	\begin{equation} \label{eq:eijUeijV}
		\mathbf{e}_{ij}^T U = 0, \ \text{and} \ \mathbf{e}_{ij}^T M^{-1} V = 0.
	\end{equation}
	Hereafter, we prove that (\ref{eq:eijUeijV}) holds if and only if the two conditions in this lemma are satisfied.
	
	Note that for any initial condition $x_0 \in \mathbb{R}^n$, the zero input response of $\bm{\Sigma}$ converge to 
	\begin{equation} \label{eq:conver}
	\lim\limits_{t \rightarrow \infty} e^{-\mathcal{L}t} x_0 = \mathcal{J} x_0 = U V^T x_0.
	\end{equation}
	Thus, vertices $i$ and $j$ reach consensus $\forall x_0$ equivalently means that the $i$-th and $j$-th rows of $U$ coincide, i.e., $\mathbf{e}_{ij}^T U = 0$. 
	Furthermore, it follows from e.g., \cite{Wu2007Synchronization} that 
	\begin{equation} \label{eq:eiV=0}
		\mathbf{e}_{i}^T V = 0, \ \forall i \in \mathbb{V}  \backslash  \mathbb{S}_L.
	\end{equation}
	Thus, by the definition of generalized balanced graph, $\mathbf{e}_{ij}^T M^{-1} V = 0$ holds if and only if vertices $i$ and $j$ belongs to the same LSCC, or  $i,j \in \mathbb{V}  \backslash  \mathbb{S}_L$ (in the latter case, $\mathbf{e}_{i}^T M^{-1} V = \mathbf{e}_{j}^T M^{-1} V = 0$ due to (\ref{eq:eiV=0})).  
\end{proof}

\begin{rem}
	The clusterability of different types of digraphs are discussed.
	If $\mathcal{G} \in \mathbb{G}_s$, we have $U = \frac{1}{\sqrt{n}}\mathds{1}_n$ meaning that all the vertices achieve a global consensus, i.e., $\forall i,j \in \mathbb{V}$, $x_i(t) \rightarrow x_j(t)$ as $t\rightarrow \infty$. Moreover, $V \in \mathbb{R}^n$ has all positive entries \cite{XiaodongCDC2017Digraph,ishizaki2015clustereddirected} such that $\mathds{1}^T L = \mathds{1}^T M \mathcal{L} =0$. Thus, all the vertices are clusterable. When $\mathcal{G} \in \mathbb{G}_q$, i.e., $\mathcal{G}$ contains a single LSCC, the directed network can still reach a global consensus due to $\mathsf{ker} (\mathcal{L}) = \mathsf{im} (\mathds{1}_n)$, whereas there will be two sets of clusterable vertices, which are the vertices inside $\mathbb{S}_L$ and all the other vertices outside $\mathbb{S}_L$. 
	In a more general case that $\mathcal{G} \in \mathbb{G}_w$, the system $\bm{\Sigma}$ in (\ref{sys}) may \textit{not} achieve a global consensus. Instead, local consensus is achievable among the vertices that are able to influence each other. Namely, $\bm{\Sigma}$ forms cells of consensus with different consensus values in each cell. It is guaranteed that the vertices in the same LSCC are clusterable.
\end{rem}

The definition of clusterability is nontrivial as it determines the feasibility of a graph clustering. More specifically, we find that the boundedness of the approximation error between the original and reduced systems is guaranteed only if clusterable vertices are classified in the same cell. 

Consider the permutation transformation in (\ref{eq:Lblkdiag}), and denote the following set of diagonal matrices: 
\begin{equation} \label{eq:N}
\begin{split}
	\mathbb{N} := 
	&\left\{ N \in \mathbb{R}^{n \times n}:
	\Pi^T N \Pi \ \text{is invertible}, \right.\\
	& \hspace{20pt} \left. N\mathds{1} \in \mathsf{im}
	\left(  \mathcal{T}_\mu \cdot \mathsf{blkdiag}(\nu_1, \cdots, \nu_m, I)\right) \right\},
	\end{split}
\end{equation}
where $\mathcal{T}_\mu$ is the permutation matrix reforming $\mathcal{L}$ into a block upper triangular form, and $\nu_i$ is the left eigenvector of the diagonal block matrix $\mathcal{L}_{li}$ in (\ref{eq:Lblkdiag}), i.e., $\mathcal{L}_{li}^T \nu_i = 0$. Then, the clustering-based projection matrices in (\ref{projection}), i.e., $N$ and $\Pi$, are selected according to the following theorem to guarantee the boundedness of the error between $\bm{\Sigma}$ and $\bm{\hat{\Sigma}}$.
\begin{thm} \label{thm:errbounded}
	Consider the directed network system $\bm{{\Sigma}}$ in (\ref{sys}) and its reduced-order model $\bm{\hat{\Sigma}}$ in (\ref{sysr}). For all input and output matrices $H$ and $F$, the error $\lVert \eta(s) - \hat{\eta}(s) \rVert_{\mathcal{H}_2}$ is bounded if and only if $\Pi$ in (\ref{projection}) characterizes a proper clustering of $\mathcal{G}$ and $N \in \mathbb{N}$. 
\end{thm}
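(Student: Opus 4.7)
The plan is to reformulate the boundedness of $\lVert \eta(s) - \hat\eta(s) \rVert_{\mathcal{H}_2}$ as an algebraic identity via Theorem~\ref{thm:trace}, and then translate that identity into structural requirements on $\Pi$ and $N$. First I would note that the error system $\eta(s) - \hat\eta(s)$ admits the semistable block-structured realization $(A_e, B_e, C_e)$ with $A_e = \blkdiag(-\mathcal{L}, -\hat{\mathcal{L}})$, $B_e^\top = [F^\top,\ \hat F^\top]$, and $C_e = [H,\ -\hat H]$, so its limit matrix is $\mathcal{J}_e = \blkdiag(\mathcal{J}, \hat{\mathcal{J}})$. Applying Theorem~\ref{thm:trace}, boundedness of the error's $\mathcal{H}_2$-norm is equivalent to $C_e \mathcal{J}_e B_e = H \mathcal{J} F - H \Pi \hat{\mathcal{J}} \Pi^\dagger F = 0$, where I have used $\hat H = H\Pi$ and $\hat F = \Pi^\dagger F$. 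Requiring this for every admissible $(H, F)$ collapses to the matrix identity $\mathcal{J} = \Pi \hat{\mathcal{J}} \Pi^\dagger$, and the remainder of the proof establishes that this identity holds iff $\Pi$ is proper and $N \in \mathbb{N}$.

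For the necessity, I would unpack this identity using $\mathcal{J} = U V^\top$ from~(\ref{eq:decompostion}), where the columns of $V$ have the block structure dictated by~(\ref{eq:Lblkdiag}): the $k$-th column is supported on the $k$-th LSCC $\mathcal{S}_k$ with entries equal to the strictly positive left eigenvector $\nu_k$ of $\mathcal{L}_{lk}$, and vanishes elsewhere. Comparing images gives $\mathsf{im}(U) \subseteq \mathsf{im}(\Pi)$, so each column of $U$ is constant on every cluster; by Lemma~\ref{lem:clusterability} this is exactly the consensus half of the clusterability condition. Comparing kernels gives $\mathsf{ker}(\Pi^\dagger) = \mathsf{im}(N\Pi)^\perp \subseteq \mathsf{ker}(V^\top)$, equivalently $N^{-1}V = \Pi \tilde V$ for some $\tilde V$. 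Because the $k$-th column of $N^{-1}V$ equals the positive quantity $(\nu_k)_i/N_{ii}$ on $\mathcal{S}_k$ and vanishes off $\mathcal{S}_k$, row-constancy on each cluster forces every cluster to lie entirely inside one $\mathcal{S}_k$ or entirely outside $\mathbb{S}_L$ (the second half of clusterability) and, within each cluster $\mathcal{C}_l \subseteq \mathcal{S}_k$, forces $N_{ii} \propto (\nu_k)_i$. Combined across clusters this is exactly $N \in \mathbb{N}$, modulo the cluster-wise rescaling of $N$ under which $\Pi^\dagger$ is invariant.

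Conversely, assuming properness and $N \in \mathbb{N}$, the same two relations $U = \Pi \tilde U$ and $N^{-1}V = \Pi \tilde V$ hold for some $\tilde U, \tilde V \in \mathbb{R}^{r \times m}$. A direct computation gives $\hat{\mathcal{L}} \tilde U = \Pi^\dagger \mathcal{L} U = 0$ and $\hat{\mathcal{L}}^\top (\Pi^\top N\Pi)\tilde V = \Pi^\top \mathcal{L}^\top V = 0$, so $\tilde U$ and $(\Pi^\top N\Pi)\tilde V$ belong to the right and left null spaces of $\hat{\mathcal{L}}$, respectively, and the biorthogonality $\tilde V^\top (\Pi^\top N\Pi) \tilde U = V^\top U = I_m$ is inherited from $V^\top U = I_m$. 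Consequently $\hat{\mathcal{J}} = \tilde U \tilde V^\top (\Pi^\top N\Pi)$, and a short calculation using $\Pi^\dagger = (\Pi^\top N\Pi)^{-1}\Pi^\top N$ yields $\Pi \hat{\mathcal{J}} \Pi^\dagger = U V^\top = \mathcal{J}$, so the error belongs to $\mathcal{H}_2$.

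The main obstacle I anticipate is the dimensional bookkeeping in the sufficiency step: one must verify that $\dim\mathsf{ker}(\hat{\mathcal{L}}) = m$, so that $\tilde U$ actually spans $\mathsf{ker}(\hat{\mathcal{L}})$ and the computed $\tilde U \tilde V^\top (\Pi^\top N \Pi)$ really equals $\hat{\mathcal{J}}$ (rather than being a strict sub-projection). By Lemma~\ref{lem:LapSemi} this reduces to showing that the clustered digraph has exactly $m$ LSCCs, which follows from properness: no LSCC is split across clusters, no two LSCCs are merged, and no LSCC vertex is grouped with a non-LSCC vertex. A careful block-triangular analysis of $\Pi^\dagger \mathcal{L} \Pi$ induced by~(\ref{eq:Lblkdiag}) then settles the count.
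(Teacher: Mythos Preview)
Your proposal is correct and follows the same overall route as the paper: reduce boundedness to the matrix identity $\mathcal{J}=\Pi\hat{\mathcal{J}}\Pi^\dagger$ (you via Theorem~\ref{thm:trace}, the paper via the impulse-response integral directly), then characterize that identity structurally in terms of $\Pi$ and $N$.

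The differences are in presentation rather than substance. Your necessity argument through image/kernel comparisons---$\mathsf{im}(U)\subseteq\mathsf{im}(\Pi)$ and $\mathsf{im}(V)\subseteq\mathsf{im}(N\Pi)$, read off from $\mathcal{J}=UV^\top=\Pi\hat{\mathcal{J}}\Pi^\dagger$---is more compact than the paper's explicit block-by-block calculation in~(\ref{eq:thmkey1})--(\ref{eq:thmkey2}), but it proves the same facts. One point in your favor: you correctly flag the dimensional obstruction $\dim\mathsf{ker}(\hat{\mathcal{L}})=m$ in the sufficiency step and outline its resolution via LSCC-counting in the clustered graph; the paper simply asserts $\hat{\mathcal{J}}=\hat U\hat V^\top$ at~(\ref{eq:Jhat}) without addressing this. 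Your closing remark that the derived condition on $N$ is only ``$N\in\mathbb{N}$ up to cluster-wise rescaling'' is also accurate---the paper's own necessity argument carries the same slack, and the statement should be read modulo the invariance of $\Pi^\dagger$ under such rescaling.
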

\begin{proof}
	The $\mathcal{H}_2$-norm of the approximation error is given by 
	\begin{equation} \label{eq:H2integral}
	\lVert \eta(s)- \hat{\eta}(s) \rVert_{\mathcal{H}_2}^2
	= \int_{0}^{\infty} \lVert \xi(t) - \hat{\xi}(t)\rVert_2^2 dt,
	\end{equation}
	where $\xi(t): = H e^{-\mathcal{L}t} F$ and $\hat{\xi}(t): = H\Pi e^{-\hat{\mathcal{L}}t} \Pi^\dagger F$ are the impulse responses of $\bm{\Sigma}$ and $\bm{\hat{\Sigma}}$, respectively.
	Since both $\xi(t)$ and $\hat{\xi}(t)$ are smooth functions over $t \in \mathbb{R}_+$, the integral in (\ref{eq:H2integral}) is finite if and only if the error $\xi(t) - \hat{\xi}(t)$ exponentially converges to zero. Hence, for general $H$ and $F$ matrices, the boundedness of $\lVert \eta(s)- \hat{\eta}(s) \rVert_{\mathcal{H}_2}$ is equivalent to
	\begin{equation} \label{eq:thmkey}
	\mathcal{J} = \Pi \hat{\mathcal{J}} \Pi^\dagger.
	\end{equation}
	with $\mathcal{J}: = \lim\limits_{\tau \rightarrow \infty} e^{-\mathcal{L}\tau}$ and $\hat{\mathcal{J}}: = \lim\limits_{\tau \rightarrow \infty} e^{-\hat{\mathcal{L}}\tau}$ .
	
	To prove the ``if'' part, we assume $\{  \mathcal{C}_1,\mathcal{C}_2,\cdots,\mathcal{C}_r\}$ to be a proper clustering of $\mathcal{G}$. With $N \in \mathbb{N}$, we verify 
	\begin{equation} \label{eq:UVprop}
	U = \Pi \Pi^\dagger U, \ \text{and} \ V^T = V^T \Pi \Pi^\dagger 
	\end{equation}
	as follows.
	Without loss of generality, assume that 
	\begin{equation} \label{eq:Piblock}
	\Pi = \mathsf{blkdiag} \left( \mathds{1}_{|\mathcal{C}_1|}, \mathds{1}_{|\mathcal{C}_2|},\cdots,\mathds{1}_{|\mathcal{C}_r|} \right).
	\end{equation}
	Accordingly, the matrices $U$ and $V$ in (\ref{eq:UVdef}) are partitioned  as 
	$U^T = [{U}_1^T, \cdots, {U}_r^T]$ and $V^T = [{V}_1^T, \cdots, {V}_r^T]$. Meanwhile, 
	the projection $\Gamma = \Pi \Pi^\dagger$ is written in a block diagonal form with the $i$-th diagonal entry as  
	\begin{equation} \label{eq:blockGamma}
	\Gamma_i =  \mathds{1}_{|\mathcal{C}_i|}(\mathds{1}_{|\mathcal{C}_i|}^T N_i \mathds{1}_{|\mathcal{C}_i|})^{-1}\mathds{1}_{|\mathcal{C}_i|}^T N_i.
	\end{equation}
	Since $N_i$ is the corresponding principal submatrix in $N$, i.e., it is diagonal and nonsingular, the equations in (\ref{eq:UVprop}) hold if and only if 
	\begin{equation} \label{eq:UViprop}
		{U}_i = \Gamma_i {U}_i, \ \text{and} \ {V}_i^T = {V}_i^T \Gamma_i.
	\end{equation}
	It follows from Lemma \ref{lem:clusterability} that ${U}_i = \mathds{1}_{|\mathcal{C}_i|}$, which verifies the first equation in (\ref{eq:UViprop}). 
	Moreover, as the vertices in $\mathcal{C}_i$ are clusterable, Lemma \ref{lem:clusterability} implies that these vertices are either contained in the same LSCC or in the set $\mathbb{V}  \backslash  \mathbb{S}_L$. In the first case, ${V}_i = N_i\mathds{1}_{|\mathcal{C}_i|}$ owing to $\delta(N) \in \mathbb{N}$, and the second case indicates that ${V}_i = 0$ from (\ref{eq:eiV=0}).
 	It is verified that the second equation in (\ref{eq:UViprop}) are satisfied in both cases. Thus, the equations in (\ref{eq:UVprop}) hold.
	Based on this, we compute $\hat{\mathcal{J}}$ in (\ref{eq:thmkey}) for the reduced-order system $\bm{\hat{\Sigma}}$. Let $\hat{U}: = \Pi^\dagger U$ and $\hat{V}^T: = V^T \Pi$, which leads to  
	\begin{equation}
	\begin{split}
	\hat{\mathcal{L}} \hat{U} &= \Pi^\dagger \mathcal{L} \Pi \Pi^\dagger U = \Pi^\dagger \mathcal{L} U = 0, \\
	\hat{V}^T \hat{\mathcal{L}} &= V^T \Pi  \Pi^\dagger \mathcal{L} \Pi = V^T \mathcal{L} \Pi = 0.
	\end{split}
	\end{equation}
	Furthermore, due to 
	\begin{equation}
	\hat{V}^T \hat{U} = V^T \Pi \Pi^\dagger U = V^T U = I_m,
	\end{equation} 
	we obtain
	\begin{equation} \label{eq:Jhat}
		\hat{\mathcal{J}}: =  \hat{U} \hat{V}^T = \Pi^{\dagger} U V^T \Pi,
	\end{equation}
	and thus,
	\begin{equation}
	\Pi \hat{\mathcal{J}} \Pi^\dagger = \Pi\Pi^{\dagger} U V^T \Pi\Pi^\dagger = UV^T = \mathcal{J}.
	\end{equation}
	Consequently, the error $\lVert \eta(s) - \hat{\eta}(s) \rVert_{\mathcal{H}_2}$ is bounded.
	
	For the ``only if'' part, $\lVert \eta(s) - \hat{\eta}(s) \rVert_{\mathcal{H}_2}$ is assumed to be bounded for all $H$ and $F$ matrices, equivalently, (\ref{eq:thmkey}) holds. Similarly, the block diagonal structure of $\Pi$ in (\ref{eq:Piblock}) is assumed without loss of generality such that (\ref{eq:thmkey}) is presented as
	\begin{equation} \label{eq:thmkey1} 
	\begin{bmatrix}
	{U}_1 \\
	\vdots \\
	{U}_r \\
	\end{bmatrix}
	\left[ 
	{V}_1^T,  
	\cdots,  
	{V}_r^T
	\right]
	=
	\begin{bmatrix}
	\mathds{1}_{|\mathcal{C}_1|}\tilde{U}_1 \\
	\vdots \\
	\mathds{1}_{|\mathcal{C}_r|}\tilde{U}_r \\
	\end{bmatrix}
	\left[ 
	\tilde{V}_1^T \Pi^\dagger_1,  
	\cdots,  
	\tilde{V}_r^T \Pi^\dagger_r
	\right],
	\end{equation}
	where $\Pi^\dagger_i : = (\mathds{1}_{|\mathcal{C}_i|}^T N_i \mathds{1}_{|\mathcal{C}_i|})^{-1}\mathds{1}_{|\mathcal{C}_i|}^T N_i$,  $\tilde{U}_i: = \mathbf{e}^T_i \tilde{U}$ and $\tilde{V}_i: = \mathbf{e}^T_i \tilde{V}$, with $\tilde{U}$ and $\tilde{V}$ fulfilling
	\begin{equation}
		\mathsf{im}(\tilde U) = \mathsf{ker}(\hat{\mathcal L}), \ \mathsf{im}(\tilde V) = \mathsf{ker}(\hat{\mathcal L}^T),
		\ \text{and} \
		{\tilde V}^T {\tilde U} = I.
	\end{equation}
	The matrices $U_i,V_i \in \mathbb{R}^{|\mathcal{C}_i| \times m}$ are the corresponding submatrices of $U$ and $V$, respectively. Then, (\ref{eq:thmkey1}) yields
	\begin{equation} \label{eq:thmkey2} 
	\begin{split}
	U_i V_j^T &= \mathds{1}_{|\mathcal{C}_i|} \tilde{U}_i \tilde{V}_j^T \Pi^\dagger_j \\
	& = \alpha_{ij} \cdot \mathds{1}_{|\mathcal{C}_i|} \mathds{1}_{|\mathcal{C}_j|}^T N_j,
	\ \ \forall i,j = 1,2,\cdots,r.
	\end{split}
	\end{equation}
	with a scalar $\alpha_{ij}: = \tilde{U}_i \tilde{V}_j^T \cdot (\mathds{1}_{|\mathcal{C}_j|}^T N_j \mathds{1}_{|\mathcal{C}_j|})^{-1}$. It follows that 
	\begin{equation}
		\mathbf{e}^T_{ij} {U}_k = 0, \ \text{and} \ V_k^T N_k^{-1} \mathbf{e}_{ij} = 0, \ \forall i,j \in \mathcal{C}_k.
	\end{equation}
	Note that $L U = 0$ and $V^T N^{-1} L = 0$ with $L$ defined in (\ref{sysbal}). Thus, we obtain $\forall i,j \in \mathcal{C}_k$,
	$\mathbf{e}_{ij} \in \mathsf{ker}(L)^\perp$ and $\mathbf{e}_{ij} \in \mathsf{ker}(L^T)^\perp$. As the result holds for all cells $\mathcal{C}_1,\mathcal{C}_2,\cdots,\mathcal{C}_r$, the graph clustering is proper by Definition \ref{defn:clusterable}. 
	
	Next, we prove that $N \in \mathbb{N}$ is also necessary for a bounded approximation error. Clearly, $\Pi^T N \Pi$ has to be invertible in (\ref{projection}). 
	A proper clustering $\{  \mathcal{C}_1,\mathcal{C}_2,\cdots,\mathcal{C}_r\}$ means that each cell $\mathcal{C}_k$ ($k = 1, \cdots, r$) is included in either $\mathbb{V}  \backslash  \mathbb{S}_L$ or a LSCC, denoted by $\mathcal{S}_\mu$. If $\mathcal{C}_k \subseteq \mathbb{V}  \backslash  \mathbb{S}_L$, we have $V_k = 0$ owing to (\ref{eq:eiV=0}) such that (\ref{eq:thmkey2}) is equal to zero, and thus $N_k$ can be an arbitrary nonsingular diagonal matrix, that is $N_k \in \mathsf{im}(I)$.
	
	For the second case that $\mathcal{C}_k \subseteq \mathcal{S}_\mu$, we can assume, without loss of generality, that the set $\mathcal{S}_\mu$ is the union of $\mu$ disjoint cells $\mathcal{C}_1, \cdots, \mathcal{C}_{k}, \cdots, \mathcal{C}_{\mu}$. 
	Let $\mathcal{L}_\mu$ be the Laplacian matrix associated with $\mathcal{S}_\mu$ and $U_s, V_s \in \mathbb{R}^{|\mathcal{S}_\mu| \times m}$ be the rows of $U$, $V$ that correspond to $\mathcal{S}_\mu$. Thus, we obtain from \cite{Wu2007Synchronization} that
	\begin{equation}
	\begin{split}
		\mathsf{im}(U_s) &= \mathsf{ker}({\mathcal L}_\mu) =  \mathsf{im}(\mathds{1}_{|\mathcal{S}_\mu|}), \\ \mathsf{im}(V_s) &= \mathsf{ker}({\mathcal L}_\mu^T) = \mathsf{im}(\nu_s),
	\end{split}
	\end{equation}
	where $\nu_s$ has strictly positive entries, and
	$\nu_s \in \{\nu_1, \cdots, \nu_m \}$. Then, from (\ref{eq:thmkey2}), we can find a permutation matrix $\mathcal{T}_s$ such that
	\begin{equation*}
	\begin{split}
		\mathcal{T}_s U_s V_s^T & = \beta  \mathds{1}_{|\mathcal{S}_\mu|} \nu_s 
		\\
		&
		 = \begin{bmatrix}
			\alpha_{11}  \mathds{1}_{|\mathcal{C}_1|} \mathds{1}_{|\mathcal{C}_1|}^T N_1 & \cdots &		\alpha_{1\mu}  \mathds{1}_{|\mathcal{C}_1|} \mathds{1}_{|\mathcal{C}_\mu|}^T N_\mu
			\\
			\vdots & \ddots & \vdots
			\\
			\alpha_{\mu 1}  \mathds{1}_{|\mathcal{C}_\mu|} \mathds{1}_{|\mathcal{C}_1|}^T N_1 & \cdots &		\alpha_{\mu \mu}  \mathds{1}_{|\mathcal{C}_\mu|} \mathds{1}_{|\mathcal{C}_\mu|}^T N_\mu^T
		\end{bmatrix},
	\end{split}
	\end{equation*}
	with $\beta$ and $\alpha_{ij}$ scalars. It follows that
	$\alpha_{1 i} = \alpha_{2 i} = \cdots = \alpha_{\mu i}$, $\forall i = 1, \cdots, \mu$, and 
	\begin{equation}
		\mathsf{blkdiag} (N_1, \cdots, N_\mu) \mathds{1}_{|\mathcal{S}_\mu|} \in \mathsf{im}(\nu_s).
	\end{equation}
 	The above reasoning can be applied to all the LSCCs such that $N \in \mathbb{N}$ is obtained.
	
	That completes the proof.	
\end{proof}

Let $n_c$ be the number of the maximal clusterable cells, then Theorem \ref{thm:errbounded} implies that the reduction order $r$ should not be less than $n_c$. Otherwise, the approximation error will be unbounded.
Besides, we use $\Pi^\dagger = (\Pi^T M \Pi)^{-1}\Pi^T M $ as the left projection matrix with $M$ in (\ref{eq:M}), where we choose $\nu_r = \mathds{1}$ in this paper such that $M \in \mathbb{N}$. Thereby, the following section will focus on finding an appropriate clustering such that the approximation error $\lVert \bm{\Sigma} - \bm{\hat{\Sigma}}\rVert_{\mathcal{H}_2}$ is as small as possible.

\begin{exm}
	Consider the weakly connected graph in Fig. \ref{fig:Ex1}, whose Laplacian matrix is given in (\ref{ex:L}). Then, the right and left nullspaces can be characterized by
	\begin{equation*}
	\begin{split}
	U^T &= \begin{bmatrix}
		   0.25  & 0.25 & 0.25 & 0.125 & 0 & 0 \\
	       0 & 0 & 0 & 0.125 & 0.25 & 0.25
	\end{bmatrix},
    \\
	V^T &= \begin{bmatrix}
		 2 & 1 & 1 & 0 & 0 & 0\\
		 0 &0 &0 &0& 1& 3
	\end{bmatrix}.
	\end{split}
	\end{equation*}
	Thus, we can choose
	$
		M = \mathsf{diag}(2,1,1,1,1,3)
	$
	such that a Laplacian matrix in the generalized balanced form (\ref{sysbal}) is obtained. By Definition \ref{defn:clusterable}, a proper clustering of the digraph is given by $\mathcal{C}_1 = \{1,2,3\}$, $\mathcal{C}_2 = \{4\}$, and $\mathcal{C}_3 = \{5,6\}$, which yields
	\begin{equation}
	\Pi = \begin{bmatrix}
	1 & 1 & 1 & 0 & 0 & 0\\
	0 & 0 & 0 & 1 & 0 & 0\\
	0 & 0 & 0 & 0 & 1 & 1
	\end{bmatrix}.
	\end{equation}
	As a result, a 3-dimensional network system is obtained in form of (\ref{sysr}) with the reduced Laplacian matrix as
	\begin{equation}
	\hat{\mathcal{L}} = \Pi^\dagger \mathcal{L} \Pi
	= \begin{bmatrix}
	0   &   0  &        0\\
	-1  &   2 &   -1\\
	0  &  0 &    0
	\end{bmatrix},
	\end{equation}
	which represents a simpler weakly connected digraph as shown in Fig. \ref{fig:Ex3red1}.
	Next, we suppose $H = F = I_6$ in (\ref{sys}) and compute the approximation error: $\lVert \bm{\Sigma} - \bm{\hat{\Sigma}} \rVert_{\mathcal{H}_2} = 0.7852$, which is bounded. 
	
	For comparison reasons, we consider an alternative clustering, namely, $\mathcal{C}_1 = \{1,2 \}$, $\mathcal{C}_2 = \{3,4\}$, and $\mathcal{C}_3 = \{5,6\}$, while keeping the same $M$ matrix. Then, a different reduced-order system $\bm{\hat{\Sigma}}$ is obtained with the Laplacian matrix
	\begin{equation}
		\hat{\mathcal{L}} = \Pi^\dagger \mathcal{L} \Pi
		= \begin{bmatrix}
		\frac{2}{3}   &   -\frac{2}{3}  &        0\\
		-\frac{3}{2}  &  2 &   -\frac{1}{2}\\
		0  &  0 &    0
		\end{bmatrix}. 
	\end{equation}
	It represents a reduced digraph as in Fig. \ref{fig:Ex3red2}, which is quasi strongly connected, and the approximation error is shown to be unbounded. Next, we use the proper clustering as before but a different $M$ matrix, e.g., $M = I$ and $M = \mathsf{diag}(1,2,2,1,1,2)$. We find that both yield unbounded reduction errors.

 	\begin{figure}[!tp]\centering
 	\begin{minipage}[t]{0.5\linewidth}
 		\centering
 		\includegraphics[width=0.8\textwidth]{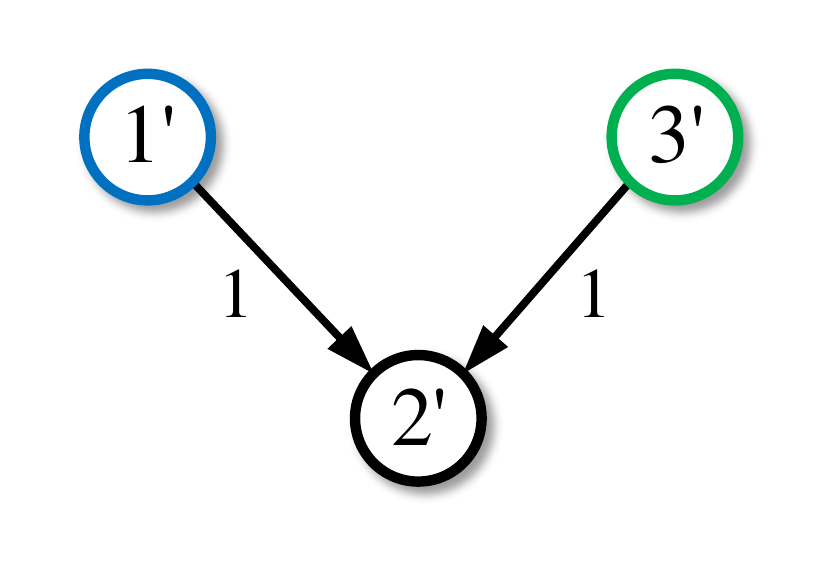}
 		\subcaption{}
 		\label{fig:Ex3red1}
 	\end{minipage}%
 	\begin{minipage}[t]{0.5\linewidth}
 		\centering
 		\includegraphics[width=0.8\textwidth]{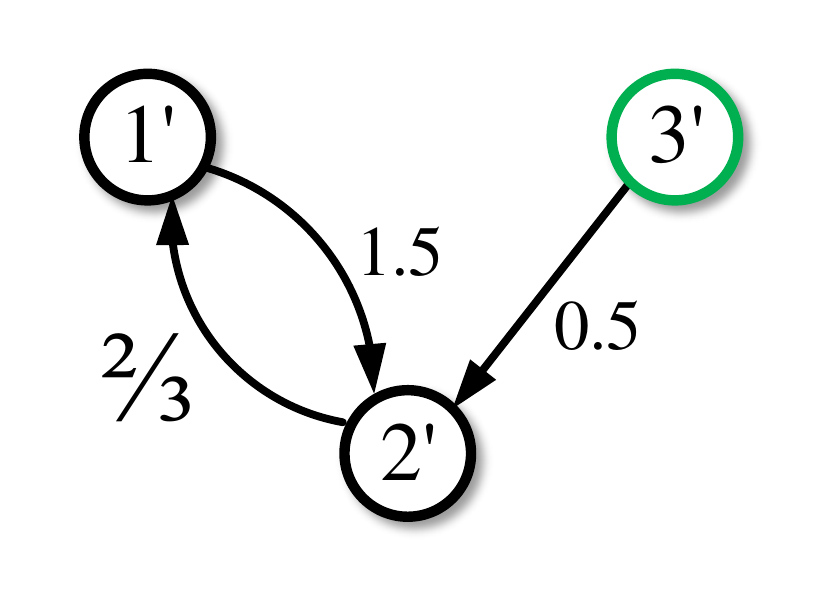}
 		\subcaption{}
 		\label{fig:Ex3red2}
 	\end{minipage}
 	\caption{(a) A reduced digraph obtained by a proper graph clustering. (b) A reduced digraph generated from an alternative graph clustering. }
 	\end{figure}
\end{exm}

%

\subsection{Vertex Dissimilarity}

The concept of clusterability determines the boundedness of the approximation error and is only dependent on the topology of the underlying network, i.e., the directed Laplacian matrix, whereas this section
investigates how to find a reduced-order model such that the magnitude of the approximation error is small. To this end, the structures of the inputs and outputs are considered as well to define the concept of vertex dissimilarity that will be regarded as a criteria for selecting appropriate graph clusterings. 
\begin{defn} 
	Consider the directed network system $\bm{\Sigma}$ in (\ref{sys}) and its generalized balanced form (\ref{sysbal}). The \textbf{dissimilarity} of a pair of clusterable vertices is defined by
	\begin{equation}
		\mathcal{D}_{ij} = \mathcal{D}^I_{ij} \cdot \mathcal{D}^O_{ij},
	\end{equation}	
	where $\mathcal{D}^I_{ij}$ and $\mathcal{D}^O_{ij}$ are input and output dissimilarities of vertices $i,j$:
	\begin{equation}
	\begin{split}
	\mathcal{D}^I_{ij}: &
	 = \lVert (\mathbf{e}_i-\mathbf{e}_j)^T (sM+L)^{-1} MF \rVert_{\mathcal{H}_2}, \\
	 \mathcal{D}^O_{ij} :&
	 = \lVert H (sM+L)^{-1} (\mathbf{e}_i-\mathbf{e}_j) \rVert_{\mathcal{H}_2}.
	\end{split}
	\end{equation}
\end{defn}

The physical meanings of the input and output dissimilarities are explained.
First, to quantify the {input dissimilarity} between a pair of vertices $i$ and $j$, we steer the network system $\bm{\Sigma}$ by injecting a series of impulse signals, i.e., let $u(t)$ in (\ref{sys}) be a vector of delta functions. Then, the responses of the vertices $i$ and $j$ are compared, i.e., 
\begin{equation}
	\Xi_i(t) - \Xi_j(t) = \left[
	\xi_{i1}(t)-\xi_{j1}(t), \cdots, \xi_{ip}(t)-\xi_{jp}(t)
	\right],
\end{equation}
with $\xi_{ij}(t) \in \mathbb{R}$ the trajectory of the vertex $i$ to the impulse in the $j$-th input channel. Thus, a proper measurement of the input dissimilarity between the vertices $i$ and $j$ is given by
\begin{equation}
	\mathcal{D}^I_{ij}= \sqrt{\int_{0}^{\infty} \left[\Xi_i(t) - \Xi_j(t)\right] \left[\Xi_i(t) - \Xi_j(t)\right]^T dt}.
\end{equation}
Essentially, the value of $\mathcal{D}^I_{ij}$ indicates how controllable the error between vertices $i$ and $j$ is. More precisely, the smaller $\mathcal{D}^I_{ij}$ means a smaller amount of input energy required to steer the vertices $i$ and $j$ to  consensus. Similarly, the {output dissimilarity} of the clusterable vertices $i$ and $j$ is characterized by
\begin{equation}
	\mathcal{D}^O_{ij}= \sqrt{\int_{0}^{\infty} \left[\Psi_i(t) - \Psi_j(t)\right]^T \left[\Psi_i(t) - \Psi_j(t)\right] dt},
\end{equation}
where $\Psi_i(t) - \Psi_j(t) \in \mathbb{R}^q$ is the output of $\bm{\Sigma}$ when only vertices $i,j$ are perturbed by an impulse signal. 
Thus, $\mathcal{D}^O_{ij}$ indicates how observable the error between vertices $i$ and $j$ is, i.e., the larger $\mathcal{D}^O_{ij}$, the less energy generated by the natural response of the perturbation on vertices $i$ and $j$, and thus more difficult the error between vertices $i$ and $j$ can be measured. 

\begin{rem}
	For vertices that are not clusterable, their dissimilarities are not properly defined, since Theorem \ref{thm:errbounded} implies that merging unclusterable vertices may cause an unbounded reduction error. Therefore, we can simply assign $\mathcal{D}_{ij}$ to be a sufficiently large positive value when $i$ and $j$ are unclusterable.
\end{rem}

Unlike the previous works in \cite{XiaodongECC2016,ishizaki2015clustereddirected} etc., the outputs of the system $\bm{\Sigma}$ are also taken into account when defining the dissimilarity. As we find that the output structures also have an impact on the approximation error, see the numerical examples in Section \ref{sec:example}, which motivates us to combine the input and output dissimilarities in order to obtain more accurate reduced networks.




Notice that the computation of $\mathcal{D}_{ij}$ for each pair of clusterable vertices $i,j$ using the definition of norms may be a formidable task when the scale of the directed network is large. Thereby, the notions of pseudo controllability and observability Gramians in Section \ref{sec:Gramian} are applied to facilitate the computation of vertex dissimilarities. 
\begin{thm} \label{thm:normcompute}
	Consider a connected directed network $\bm{\Sigma}$. The dissimilarity between two clusterable vertices $i$ and $j$ is computed as
	\begin{equation} \label{eq:Dij}
	\mathcal{D}_{ij} = 
	\sqrt{\mathbf{e}_{ij}^T\mathcal{P}\mathbf{e}_{ij}\mathbf{e}_{ij}^T M^{-1} \mathcal{Q} M^{-1}\mathbf{e}_{ij}},
	\end{equation}
	where $\mathbf{e}_{ij}: = \mathbf{e}_{i} - \mathbf{e}_{j}$, and $\mathcal{P}$, $\mathcal{Q}$ are the pseudo controllability and observability Gramians of $\bm{\Sigma}$, respectively.
\end{thm}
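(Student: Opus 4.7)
The plan is to recognize both $\mathcal{D}^I_{ij}$ and $\mathcal{D}^O_{ij}$ as $\mathcal{H}_2$-norms of auxiliary transfer functions obtained from $\bm{\Sigma}$ by appropriately modifying the input or output matrix, and then to evaluate each norm using the pseudo Gramian trace formula of Theorem~\ref{thm:trace}.

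First I would rewrite the generalized balanced form in standard state-space form. Since $L = M\mathcal{L}$ and $M$ is positive diagonal, we have $(sM + L)^{-1} = (sI + \mathcal{L})^{-1} M^{-1}$, so
\begin{align*}
\mathbf{e}_{ij}^T(sM+L)^{-1}MF &= \mathbf{e}_{ij}^T(sI+\mathcal{L})^{-1} F, \\
H(sM+L)^{-1}\mathbf{e}_{ij} &= H(sI+\mathcal{L})^{-1} M^{-1}\mathbf{e}_{ij}.
\end{align*}
Hence $\mathcal{D}^I_{ij}$ is the $\mathcal{H}_2$-norm of the semistable system $(A,B,C) = (-\mathcal{L}, F, \mathbf{e}_{ij}^T)$ and $\mathcal{D}^O_{ij}$ is the $\mathcal{H}_2$-norm of $(-\mathcal{L}, M^{-1}\mathbf{e}_{ij}, H)$. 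Crucially, both of these systems share the same underlying $A$-matrix as $\bm{\Sigma}$, so the associated matrix $\mathcal{J} = UV^T$ is the same in both cases, and consequently the pseudo Gramians $\mathcal{P}$ and $\mathcal{Q}$ defined for $\bm{\Sigma}$ serve directly as the Gramians for the two auxiliary systems (with $B=F$ for $\mathcal{P}$ and $C=H$ for $\mathcal{Q}$).

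Next I would verify the admissibility condition $C\mathcal{J}B=0$ of Theorem~\ref{thm:trace} for each auxiliary system, and this is precisely where the clusterability hypothesis enters. For the input-side system the condition reads $\mathbf{e}_{ij}^T U V^T F = 0$, which follows from $\mathbf{e}_{ij}^T U = 0$, i.e., from the consensus characterization of clusterability in Lemma~\ref{lem:clusterability}. For the output-side system the condition reads $HU V^T M^{-1}\mathbf{e}_{ij} = 0$, which follows from $V^T M^{-1}\mathbf{e}_{ij} = 0$, the second equivalent condition in Lemma~\ref{lem:clusterability}. Thus clusterability is exactly what makes both $\mathcal{H}_2$-norms finite and well-defined.

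Applying Theorem~\ref{thm:trace}, I can then evaluate
\begin{equation*}
(\mathcal{D}^I_{ij})^2 = \mathtt{tr}\!\left(\mathbf{e}_{ij}^T \mathcal{P}\, \mathbf{e}_{ij}\right) = \mathbf{e}_{ij}^T \mathcal{P}\mathbf{e}_{ij},
\end{equation*}
using the controllability-Gramian form of the trace formula, and
\begin{equation*}
(\mathcal{D}^O_{ij})^2 = \mathtt{tr}\!\left((M^{-1}\mathbf{e}_{ij})^T \mathcal{Q}\, (M^{-1}\mathbf{e}_{ij})\right) = \mathbf{e}_{ij}^T M^{-1}\mathcal{Q} M^{-1}\mathbf{e}_{ij},
\end{equation*}
using the observability-Gramian form. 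Multiplying the two scalar expressions and taking a square root yields the claimed formula for $\mathcal{D}_{ij}$. There is no substantial obstacle once the two norms are recast in standard $(A,B,C)$ form; the only subtlety is the bookkeeping that $(sM+L)^{-1}$ absorbs the $M^{-1}$ factor into the state equation so that the same pair $(\mathcal{P},\mathcal{Q})$ appears on both sides, and the verification that clusterability yields the trace-formula admissibility condition for both auxiliary systems.
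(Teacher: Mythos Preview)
Your proposal is correct and follows essentially the same approach as the paper: verify the admissibility condition $C\mathcal{J}B=0$ for each auxiliary system via the clusterability hypothesis (Lemma~\ref{lem:clusterability}), then apply the trace formula of Theorem~\ref{thm:trace}. Your version is somewhat more explicit in carrying out the conversion $(sM+L)^{-1} = (sI+\mathcal{L})^{-1}M^{-1}$ to identify the standard state-space realizations, whereas the paper states the two conditions $\mathbf{e}_{ij}^T\mathcal{J}F=0$ and $H\mathcal{J}M^{-1}\mathbf{e}_{ij}=0$ directly and invokes the trace formula in one line.
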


\begin{proof}
	Using the clusterability of  vertices $i$ and $j$, we have
	\begin{equation}
		\mathbf{e}_{ij}^T \mathcal{J} F = 0, \ \text{and} \	H \mathcal{J} M^{-1} \mathbf{e}_{ij} = 0
	\end{equation}
	Therefore, from Lemma \ref{lem:trace}, $\mathcal{D}_{ij}$ is bounded for any clusterable vertices $i,j$, and
	\begin{equation} 
	 \mathcal{D}^I_{ij}  = \sqrt{\mathbf{e}_{ij}^T\mathcal{P}\mathbf{e}_{ij}},
	\ \text{and} \
	 \mathcal{D}^O_{ij}  = \sqrt{\mathbf{e}_{ij}^T M^{-1} \mathcal{Q}M^{-1}\mathbf{e}_{ij}},
	\end{equation} 
	which gives (\ref{eq:Dij}).
\end{proof}


\subsection{Minimal Network Realization}

We discuss a novel concept of minimal realization for network systems. Before proceeding, the following definitions are introduced.

Let $\mathbb{V}_\mathcal{I}$ and $\mathbb{V}_\mathcal{O}$ be the subsets of $\mathbb{V}$ such that $i \in \mathbb{V}_\mathcal{I}$ and $i \in \mathbb{V}_\mathcal{O}$ if the vertex $i$ is steered directly by the input $u$ and directly measured by the output $y$, respectively.
\begin{defn} \label{defn:reachable}
	A vertex $j$ is \textbf{reachable} if there exists a directed path from any vertex $i \in \mathbb{V}_\mathcal{I}$ to $j$, and \textbf{detectable} if there is a directed path from $j$ to all vertex $i \in \mathbb{V}_\mathcal{O}$. Two vertices $i$ and $j$ are \textbf{0-dissimilar} if $\mathcal{D}_{ij} = 0$.
\end{defn}

Physically, unreachable vertices cannot receive information from the inputs, and thus their states is not controllable. Similarly, vertices that are undetectable cannot pass their information to the outputs, namely, their states are unobservable.  
The 0-dissimilar condition relies on the specific structures of a network topology. For some networks with special topologies, e.g., complete graphs, lattices, trees, rings, star graphs, etc., we can obtain them directly from the underlying graphs.
Consider a digraph graph $\mathcal{G} = \left(\mathbb{V}, \mathbb{E}\right)$. 
An \textit{in-neighbor} (resp. \textit{out-neighbor} ) of a vertex $i$ of $\mathcal{G}$
is a vertex $j$ with $a_{ji} \in \mathbb{E}$ (resp. $a_{ij} \in \mathbb{E}$).
The set of all the in-neighbors and out-neighbors of $i$ in $\mathcal{G}$, denoted by $\mathcal{N}_I(i)$ and $\mathcal{N}_O(i)$, are called the \textit{in-neighborhood}
and \textit{out-neighborhood} of $i$, respectively. 
Generally, if two vertices $i,j \notin \mathbb{V}_\mathcal{I}$ has a zero input dissimilarity, i.e., $\mathcal{D}^I_{ij} = 0$ if  they share the same in-neighborhood, and
\begin{equation*}
	w_{ki} = w_{kj}, \ \forall k \in \mathcal{N}_I(i) = \mathcal{N}_I(j). 
\end{equation*}
Analogously, $\mathcal{D}^O_{ij} = 0$ for two vertices $i,j \notin \mathbb{V}_\mathcal{O}$, when 
\begin{equation*} 
	w_{ik} = w_{jk}, \ \forall k \in \mathcal{N}_O(i) = \mathcal{N}_O(j).
\end{equation*}
More precisely, the condition for checking the 0-dissimilarity between a pair $i,j$ is given as follows. 
\begin{pro} \label{pro:0dis}
	Consider the network system $\bm{\Sigma}$ with $p$ inputs, clusterable vertices $i$ and $j$ are 0-dissimilar if there exists a scalar $\beta$ such that
	\begin{equation}\label{eq:0dis}
		\mathbf{e}_{ij}^T \begin{bmatrix}
			F & \mathcal{L} - \beta I 
		\end{bmatrix} = 0, 
		\ \text{or} \ 
		\begin{bmatrix}
			H M^{-1} & \mathcal{L} - \beta I  
		\end{bmatrix}\mathbf{e}_{ij} = 0.
	\end{equation} 
\end{pro}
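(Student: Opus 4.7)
The plan is to show that each of the two sufficient conditions forces one of the two factors in $\mathcal{D}_{ij}=\mathcal{D}^I_{ij}\cdot\mathcal{D}^O_{ij}$ to vanish identically as a transfer function, so that $\mathcal{D}_{ij}=0$ follows automatically. Both arguments are PBH-style pole/zero cancellations on the resolvent of $\mathcal{L}$.

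The single algebraic preparation I would do is to exploit $L=M\mathcal{L}$ to rewrite the resolvent,
$$(sM+L)^{-1}=(sI+\mathcal{L})^{-1}M^{-1},$$
which reduces the transfer matrices appearing in $\mathcal{D}^I_{ij}$ and $\mathcal{D}^O_{ij}$ to the cleaner forms $\mathbf{e}_{ij}^{\top}(sI+\mathcal{L})^{-1}F$ and $H(sI+\mathcal{L})^{-1}M^{-1}\mathbf{e}_{ij}$ respectively. This puts both expressions into the ``output matrix $\times$ resolvent of $\mathcal{L}$ $\times$ fixed vector'' pattern that a one-shot eigenvector calculation can collapse.

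For the first condition, $\mathbf{e}_{ij}^{\top}(\mathcal{L}-\beta I)=0$ says $\mathbf{e}_{ij}^{\top}$ is a left eigenvector of $\mathcal{L}$ at $\beta$, so $\mathbf{e}_{ij}^{\top}(sI+\mathcal{L})^{-1}=(s+\beta)^{-1}\mathbf{e}_{ij}^{\top}$; post-multiplying by $F$ and invoking $\mathbf{e}_{ij}^{\top}F=0$ gives the zero transfer matrix, hence $\mathcal{D}^I_{ij}\equiv 0$. For the second condition I would run the dual right-eigenvector argument: reading the eigen-condition in the sense of the generalized pencil $(L,M)$ — i.e.\ $M^{-1}\mathbf{e}_{ij}$ lies in the $\beta$-eigenspace of $\mathcal{L}$, equivalently $Lv=\beta Mv$ — gives $(sI+\mathcal{L})^{-1}M^{-1}\mathbf{e}_{ij}=(s+\beta)^{-1}M^{-1}\mathbf{e}_{ij}$, and premultiplying by $H$ together with $HM^{-1}\mathbf{e}_{ij}=0$ kills the expression, so $\mathcal{D}^O_{ij}\equiv 0$.

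The only step requiring any thought, and the main potential obstacle, is correctly tracking where the weighting $M$ enters in the second half of the argument: the resolvent in the expression for $\mathcal{D}^O_{ij}$ acts on $M^{-1}\mathbf{e}_{ij}$ rather than on $\mathbf{e}_{ij}$ itself, so the PBH eigenvector test must be phrased on the weighted vector (or equivalently on the pencil $(L,M)$) in order to line up cleanly with the zero furnished by $HM^{-1}\mathbf{e}_{ij}=0$. Once that identification is made, each half is a one-line cancellation of a simple pole against a scalar zero, precisely in the spirit of the classical PBH input/output invariance tests.
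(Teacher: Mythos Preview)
Your first half is correct and is exactly the paper's argument transposed from the time domain to the Laplace domain: the paper expands $\mathbf{e}_{ij}^{\top}e^{-\mathcal{L}t}F$ as a power series, uses $\mathbf{e}_{ij}^{\top}\mathcal{L}^k=\beta^k\mathbf{e}_{ij}^{\top}$, and then kills everything with $\mathbf{e}_{ij}^{\top}F=0$; you do the same thing with the resolvent identity $\mathbf{e}_{ij}^{\top}(sI+\mathcal{L})^{-1}=(s+\beta)^{-1}\mathbf{e}_{ij}^{\top}$. Either route is fine.

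The second half has a genuine gap, and it is precisely the point you yourself flag as the obstacle. From $(\mathcal{L}-\beta I)\mathbf{e}_{ij}=0$ you correctly obtain the generalized eigen-relation $L\mathbf{e}_{ij}=\beta M\mathbf{e}_{ij}$ (just multiply by $M$). But this is \emph{not} the same as ``$M^{-1}\mathbf{e}_{ij}$ lies in the $\beta$-eigenspace of $\mathcal{L}$'': the hypothesis gives $(sI+\mathcal{L})^{-1}\mathbf{e}_{ij}=(s+\beta)^{-1}\mathbf{e}_{ij}$, whereas the identity you actually invoke, $(sI+\mathcal{L})^{-1}M^{-1}\mathbf{e}_{ij}=(s+\beta)^{-1}M^{-1}\mathbf{e}_{ij}$, would require $\mathcal{L}M^{-1}\mathbf{e}_{ij}=\beta M^{-1}\mathbf{e}_{ij}$. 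That does not follow in general; concretely, for $k\neq i,j$ the $k$-th entry of $\mathcal{L}M^{-1}\mathbf{e}_{ij}$ equals $\mathcal{L}_{ki}(m_i^{-1}-m_j^{-1})$, which need not vanish unless $m_i=m_j$. So the pole/zero cancellation you set up acts on the wrong vector, and the conclusion $H(sI+\mathcal{L})^{-1}M^{-1}\mathbf{e}_{ij}=0$ is not justified by your argument.

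For context, the paper's own proof of this second case is just the word ``analogously'' and, if one tries to unpack it via the Taylor-series route, runs into the identical $M^{-1}$ mismatch. You have not overlooked an idea that the paper provides; rather, you have correctly identified the delicate step and then resolved it with an equivalence that does not hold.
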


\begin{proof}
	By definition, vertices $i$ and $j$ are 0-dissimilar if $\mathcal{D}^I_{ij} = 0$ or $\mathcal{D}^O_{ij} = 0$. Thus, we first derive the sufficient condition for $\mathcal{D}^I_{ij} = 0$, i.e.,
	\begin{equation}  \label{eq:int:dissim}
		\int_{0}^{\infty} \mathbf{e}_{ij}^T  (e^{-\mathcal{L}t}-\mathcal{J})FF^T(e^{-\mathcal{L}^Tt} - \mathcal{J}^T) \mathbf{e}_{ij} dt = 0,
	\end{equation}
	which is equivalent to $\mathbf{e}_{ij}^T  (e^{-\mathcal{L}t}-\mathcal{J})F = \mathbf{e}_{ij}^T  e^{-\mathcal{L}t}F= 0$, $\forall t \in \mathbb{R}_+$. 
	It follows from the first equation in (\ref{eq:0dis}) that $\forall k\geq 1$,
	\begin{equation}
	\begin{split}
		\mathbf{e}_{ij}^T \mathcal{L}^k &= \mathbf{e}_{ij}^T (\mathcal{L} - \beta I + \beta I) \mathcal{L}^{k-1}
		\\
		&= \beta \mathbf{e}_{ij}^T \mathcal{L}^{k-1} = \cdots = \beta^{k-1} \mathbf{e}_{ij}^T \mathcal{L} = \beta^k \mathbf{e}_{ij}^T.
	\end{split}
	\end{equation}	
	As a result, the Taylor expansion of $\mathbf{e}_{ij}^T  e^{-\mathcal{L}t}F$  yields
	\begin{equation}
	\begin{split}
		\mathbf{e}_{ij}^T e^{-\mathcal{L}t}F & = \sum_{k=0}^{\infty} \dfrac{(-t)^k}{k!} \mathbf{e}_{ij}^T \mathcal{L}^k F 
		\\
		& = \mathbf{e}_{ij}^T F + \sum_{k=1}^{\infty} \dfrac{(-t)^k \beta^k}{k!} \mathbf{e}_{ij}^T F = 0.
	\end{split}
	\end{equation}
	Therefore, (\ref{eq:int:dissim}) holds.
	Analogously, the second equation in (\ref{eq:0dis}) implies that 
	$H(e^{-\mathcal{L}t}-\mathcal{J})M^{-1} \mathbf{e}_{ij} = H e^{-\mathcal{L}t} M^{-1} \mathbf{e}_{ij} = 0$,
	$\forall t \in \mathbb{R}_+$. Thus, $\mathcal{D}^O_{ij} = 0$.	
\end{proof}

Thereby, the following concept is proposed.
\begin{defn}
	The network system $\bm{\Sigma}$ in (\ref{sys}) is called a \textbf{minimal network realization} if all the vertices are reachable and detectable, and there does not exist 0-dissimilar vertices in the underlying network. 
\end{defn}
Parallel to the minimal realization of general linear systems obtained by Kalman decomposition, a minimal network realization is acquired by removing unreachable and undetectable vertices and aggregating 0-dissimilar vertices.  The following result shows that no approximation error is generated in the realization of the network minimality.

\begin{thm} \label{thm:FullyCluterable}
	If the reduced-order model $\bm{\hat{\Sigma}}$ is obtained by 
	\begin{itemize}
		\item removing all unreachable or undetectable vertices
		
		\item or merging all 0-dissimilarity vertices (i.e., $\mathcal{D}_{ij} = 0$),
	\end{itemize}
	then $\lVert \bm{{\Sigma}} - \bm{\hat{\Sigma}} \rVert_{\mathcal{H}_2} = 0$.
\end{thm}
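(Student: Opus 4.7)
My plan is to prove the two operations separately, showing each preserves the transfer function of $\bm{\Sigma}$ exactly, so that $\|\bm{\Sigma} - \bm{\hat\Sigma}\|_{\mathcal{H}_2} = 0$ follows immediately. For the removal of unreachable or undetectable vertices I would set up a Kalman-type decomposition: order vertices so that reachable ones come first. If $j$ is unreachable, then no input acts directly on $j$ (so $\mathbf{e}_j^T F = 0$), and no edge from a reachable vertex can terminate at $j$, otherwise $j$ would inherit reachability through that edge. Hence the unreachable-to-reachable block of $\mathcal{L}$ must vanish and $\mathcal{L}$ becomes block-triangular with $F_u = 0$. The impulse response $He^{-\mathcal{L}t}F$ then reduces to $H_r e^{-\mathcal{L}_{rr}t}F_r$, which is precisely the response of the system restricted to reachable vertices. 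Applying the dual of this argument to $\mathcal{L}^T$ with $H^T$ handles the undetectable case, and composing the two removals preserves the transfer function exactly.

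For the merging operation I would reduce to a single 0-dissimilar pair $\{i,j\}$ and iterate (each merge leaves the transfer function intact, hence any remaining original 0-dissimilar pair still satisfies $\mathcal{D}_{\cdot\cdot}=0$ in the reduced system). Since $i,j$ are clusterable, the arguments in Lemma \ref{lem:clusterability} and Theorem \ref{thm:normcompute} give $\mathbf{e}_{ij}^T \mathcal{J} = 0$ and $\mathcal{J} M^{-1}\mathbf{e}_{ij}=0$, so
\[
\mathcal{D}_{ij}^2 \;=\; (\mathbf{e}_{ij}^T\mathcal{P}\mathbf{e}_{ij})(\mathbf{e}_{ij}^T M^{-1}\mathcal{Q}M^{-1}\mathbf{e}_{ij}),
\]
and $\mathcal{D}_{ij}=0$ forces $\mathcal{D}^I_{ij}=0$ or $\mathcal{D}^O_{ij}=0$. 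In the input case, the $\mathcal{H}_2$ integrand must be identically zero, which together with $\mathbf{e}_{ij}^T\mathcal{J}=0$ gives $\mathbf{e}_{ij}^T e^{-\mathcal{L}t}F=0$ for all $t\geq 0$. Thus $x(t):=e^{-\mathcal{L}t}F$ has equal $i$- and $j$-th rows, so $x(t)\in\mathsf{im}(\Pi)$ and $\Gamma x(t) = x(t)$ for $\Gamma := \Pi\Pi^\dagger$; differentiation preserves the subspace, hence $\dot x(t)\in\mathsf{im}(\Pi)$ and $\Gamma\mathcal{L}x(t)=\mathcal{L}x(t)$. Consequently $x$ satisfies the lifted reduced ODE $\dot{\bar x} = -\Gamma\mathcal{L}\bar x$ with $\bar x(0) = F = \Gamma F$ (the latter because the $t=0$ evaluation of $\mathbf{e}_{ij}^T e^{-\mathcal{L}t}F = 0$ forces $F_i = F_j$), so uniqueness against $\Pi e^{-\hat{\mathcal{L}}t}\Pi^\dagger F$ yields $Hx(t) = H\Pi e^{-\hat{\mathcal{L}}t}\Pi^\dagger F$ and the transfer functions coincide.

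The output-side case $\mathcal{D}^O_{ij}=0$ I would treat by duality. The condition reads $He^{-\mathcal{L}t}M^{-1}\mathbf{e}_{ij}=0$, which transposes to the adjoint impulse response $\zeta(t):=e^{-\mathcal{L}^T t}H^T$ satisfying $\mathbf{e}_{ij}^T M^{-1}\zeta(t)=0$, that is, $\zeta(t)\in\mathsf{im}(M\Pi) = \mathsf{im}((\Pi^\dagger)^T)$. The dual projector $\Gamma^T = M\Pi(\Pi^T M\Pi)^{-1}\Pi^T$ acts as the identity on this subspace, and $H^T\in\mathsf{im}((\Pi^\dagger)^T)$ follows from the $t=0$ evaluation $HM^{-1}\mathbf{e}_{ij}=0$. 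The invariance argument of the input case, applied now to the adjoint with projector $\Gamma^T$, yields $\zeta(t) = (\Pi^\dagger)^T e^{-\hat{\mathcal{L}}^T t}\Pi^T H^T$, and pairing with $F^T$ returns the primal transfer-function equality after transposing. The main obstacle is this dual bookkeeping: one has to verify that the $M$-weighting embedded in $\mathcal{D}^O_{ij}$ is exactly aligned with the Petrov--Galerkin projectors $\Pi^\dagger$ and $\Pi$, so that $\mathsf{im}((\Pi^\dagger)^T)$ is invariant under the adjoint flow starting from $H^T$; once this alignment is secured, uniqueness of the ODE closes the argument.
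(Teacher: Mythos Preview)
Your proof is correct in substance and takes a genuinely different route from the paper. For the merging step, the paper works in the Laplace domain: it writes the error $\eta(s)-\hat\eta(s)=H\eta_e F$ with $\eta_e=\chi(s)-\Pi\hat\chi(s)\Pi^\dagger$, then applies block similarity transforms $T_1,T_2$ to factor $\eta_e$ into an expression flanked by $\tilde\Gamma=I-\Pi\Pi^\dagger$ on both sides, so that the error vanishes whenever $\tilde\Gamma\chi(s)F=0$ (the 0-input-dissimilar case) or $H\chi(s)\tilde\Gamma=0$ (the 0-output-dissimilar case). Your argument is instead a time-domain invariance/uniqueness argument: from $\mathbf{e}_{ij}^T e^{-\mathcal{L}t}F=0$ you conclude $x(t)=e^{-\mathcal{L}t}F\in\mathsf{im}(\Pi)$ for all $t$, hence $\Pi^\dagger x$ solves the reduced ODE with matching initial data, and uniqueness forces $x(t)=\Pi e^{-\hat{\mathcal{L}}t}\Pi^\dagger F$. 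The dual version with $\zeta(t)=e^{-\mathcal{L}^T t}H^T\in\mathsf{im}(M\Pi)$ and projector $\Gamma^T$ is handled correctly; your alignment check $\mathsf{im}((\Pi^\dagger)^T)=\mathsf{im}(M\Pi)$ is exactly what is needed. Your approach is more elementary and geometric, avoiding resolvent algebra; the paper's factorization, on the other hand, yields a structural identity for the error that could in principle be reused for bounds, and it treats an entire cluster at once rather than pairwise.

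One point to tighten: your iteration justification (``each merge leaves the transfer function intact, hence any remaining original 0-dissimilar pair still satisfies $\mathcal{D}_{\cdot\cdot}=0$'') is not quite right as stated, because dissimilarity is a state-level quantity (rows of $e^{-\mathcal{L}t}F$ or columns of $He^{-\mathcal{L}t}M^{-1}$), not determined by the input--output map alone. What actually propagates input 0-dissimilarity is the stronger identity $e^{-\mathcal{L}t}F=\Pi e^{-\hat{\mathcal{L}}t}\Pi^\dagger F$ that you do establish, and likewise $He^{-\mathcal{L}t}=H\Pi e^{-\hat{\mathcal{L}}t}\Pi^\dagger$ on the output side. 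Mixing the two types across successive merges would require extra care. The clean fix is to drop the iteration altogether: your ODE argument applies verbatim to the full clustering $\Pi$ as soon as all vertices in each nontrivial cell are mutually 0-input-dissimilar (so $e^{-\mathcal{L}t}F\in\mathsf{im}(\Pi)$) or all mutually 0-output-dissimilar (so $e^{-\mathcal{L}^T t}H^T\in\mathsf{im}(M\Pi)$), which is precisely the setting the paper treats.
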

\begin{proof}
	Neglecting the vertices that are not reachable or detectable is equivalent to remove uncontrollable or observable states of $\bm{\Sigma}$. Therefore, the reduction will not change the transfer function of the system, namely, $\lVert \bm{{\Sigma}} - \bm{\hat{\Sigma}} \rVert_{\mathcal{H}_2} = 0$.
	
	Next, we show that clustering all 0-dissimilarity vertices does not generate an approximation error neither. 
	To this end, we denote
	\begin{equation}
		\chi(s): = (sI_n + \mathcal{L})^{-1}, \hat{\chi}(s): = (sI_r +\hat{\mathcal{L}})^{-1}.
	\end{equation}
	and let $\tilde{\Gamma} : = I - \Pi \Pi^\dagger$, which satisfies
	\begin{equation} \label{eq:proGamma}
		\tilde{\Gamma}^T = \tilde{\Gamma}, \ \text{and} \ \tilde{\Gamma}^2 = \tilde{\Gamma}.
	\end{equation}
 
	Consider the error system $\bm{\Sigma}_e = \bm{\Sigma} - \bm{\hat{\Sigma}}$, whose transfer function is $\eta(s) - \hat{\eta}(s) : = H \eta_e F $ with 
	%
	\begin{equation}
	\eta_e
	 = \begin{bmatrix}
	I & -\Pi
	\end{bmatrix} 
	\begin{bmatrix}
	\chi(s) & 0\\
	0 & \hat{\chi}(s)
	\end{bmatrix} 
	 \begin{bmatrix}
	I\\
	\Pi^\dagger
	\end{bmatrix},
	\end{equation}
	where $\Pi^\dagger$ is the reflexive generalized inverse of $\Pi$ in (\ref{projection}).
 	Then, using the following nonsingular matrices 
	\begin{equation}
	\begin{split}
	T_1=\left[\begin{matrix}
	I & \Pi^\dagger\\
	0 & I_r  
	\end{matrix}\right], \  
	T_1^{-1}=\left[\begin{matrix}
	I_n  & -\Pi^\dagger\\
	0 & I_r
	\end{matrix}\right], 
	\\
	T_2=\left[\begin{matrix}
	I_n & 0\\
	\Pi^\dagger & I_r  
	\end{matrix}\right], \  
	T_2^{-1}=\left[\begin{matrix}
	I_n & 0\\
	-\Pi^\dagger  & I_r
	\end{matrix}\right], \\
	\end{split}
	\end{equation}	
	we further obtain
	\begin{equation*}
	\begin{split}
	\eta_e =&
	\begin{bmatrix}
	I & -\Pi
	\end{bmatrix}T_1 \left(T^{-1}
	\begin{bmatrix}
		\chi(s) & 0\\
		0 & \hat{\chi}(s)
	\end{bmatrix} T_1\right)
	 \cdot T_1^{-1}
	\begin{bmatrix}
	\chi(s)^{-1} & 0\\
	0 & \hat{\chi}(s)^{-1}
	\end{bmatrix}
	T_2
	\\
	& \cdot \left(T_2^{-1} 	
	\begin{bmatrix}
		\chi(s) & 0\\
		0 & \hat{\chi}(s)
	\end{bmatrix}
	T_2 \right)T_2^{-1}
	 \begin{bmatrix}
	I\\ \Pi^\dagger
	\end{bmatrix}  
	\\
	= & \chi(s) \begin{bmatrix}
	I &  -\tilde{\Gamma} \mathcal{L} \hat{\chi}(s)
	\end{bmatrix} 
	\cdot
	\begin{bmatrix}
	\chi(s)^{-1} - \Pi \hat{\chi}(s)\Pi^\dagger & -\Pi \hat{\chi}(s)\\
	\hat{\chi}(s) \Pi^\dagger & \hat{\chi}(s)
	\end{bmatrix}
	  \cdot \begin{bmatrix}
	I &  \hat{\chi}(s)\Pi^\dagger \mathcal{L} \tilde{\Gamma} 
	\end{bmatrix} \chi(s) \\
	= &  \tilde{\Gamma}\left[  \chi(s)^{-1} - \mathcal{L} \Pi \hat{\chi}(s) \Pi^\dagger \mathcal{L} \right] \tilde{\Gamma}.
	\end{split}
	\end{equation*}
	where the property in (\ref{eq:proGamma}) is used to obtain the above equation. Clearly, $\lVert \eta_e \rVert_{\mathcal{H}_2} = 0$ if 
	\begin{equation} \label{eq:GammaChi}
		\tilde{\Gamma} \chi(s) F = 0 \ \text{or} \ H \chi(s) \tilde{\Gamma} = 0.
	\end{equation}
	
	To prove (\ref{eq:GammaChi}), we assume, without loss of generality,
	that vertices $1,2,\cdots,k$ are 0-dissimilar, namely, $\mathbf{e}_1^T\chi(s)F = \mathbf{e}_2^T\chi(s)F = \cdots = \mathbf{e}_k^T\chi(s)F$, or $H\chi(s)M^{-1}\mathbf{e}_1 = H\chi(s)M^{-1}\mathbf{e}_2 = \cdots = H\chi(s)M^{-1}\mathbf{e}_k$, which means
	\begin{equation}
	\begin{split}
		 \chi(s)F: = \begin{bmatrix}
		\mathds{1}_k \mathbf{e}_1^T\chi(s)F  \\  *
		\end{bmatrix}  
		\ \text{or} \
		 H\chi(s): = \begin{bmatrix}
		H\chi(s)M^{-1}\mathbf{e}_1 \mathds{1}_k^T & *
		\end{bmatrix} M.
	\end{split}
	\end{equation}
	When all the 0-dissimilar vertices are assigned into a single cell, the following block matrices are obtained. 
	\begin{equation}
	\Pi = \begin{bmatrix}
	\mathds{1}_k & \\ & I_{n-k}
	\end{bmatrix},
	\
	M = \begin{bmatrix}
	M_a & \\ & M_b
	\end{bmatrix}, M_a \in \mathbb{R}^{k \times k}
	\end{equation}  
	where $\Pi$ is the characteristic matrix of the partition. When the vertices $1,2,\cdots,k$ are 0-input-dissimilar, we obtain
	\begin{equation*}
	\begin{split}
		\tilde{\Gamma} \chi(s) F & = 
		\begin{bmatrix}
		I_k - \dfrac{\mathds{1}_k \mathds{1}_k^T M_a} {\mathds{1}_k^T M_a \mathds{1}_k} & \\ & 0
		\end{bmatrix}
		\begin{bmatrix}
		\mathds{1}_k \mathbf{e}_1^T\chi(s)F  \\  *
		\end{bmatrix} 
		\\
		& = \begin{bmatrix}
		\left(\mathds{1}_k - \dfrac{\mathds{1}_k \mathds{1}_k^T M_a \mathds{1}_k} {\mathds{1}_k^T M_a \mathds{1}_k}\right) \mathbf{e}_1^T\chi(s)F & \\ & 0
		\end{bmatrix} = 0.
	\end{split}
	\end{equation*}
	When the vertices $1,2,\cdots,k$ are 0-output-dissimilar, we also have
	\begin{equation*}
	\begin{split}
		H \chi(s) \tilde{\Gamma} & = 
		\begin{bmatrix}
		H\chi(s)M^{-1}\mathbf{e}_1 \mathds{1}_k^T M_a & *
		\end{bmatrix}
		 \begin{bmatrix}
		I_k - \dfrac{\mathds{1}_k \mathds{1}_k^T M_a} {\mathds{1}_k^T M_a \mathds{1}_k} & \\ & 0
		\end{bmatrix}
		\\
		& = \begin{bmatrix}
		H\chi(s)M^{-1}\mathbf{e}_1\left(\mathds{1}_k^T M_a - \dfrac{\mathds{1}_k^T M_a \mathds{1}_k \mathds{1}_k^T M_a} {\mathds{1}_k^T M_a \mathds{1}_k}\right) & \\ & 0
		\end{bmatrix} 
		 = 0.
	\end{split}
	\end{equation*}
	Therefore, when clustering 0-dissimilar vertices, the equation (\ref{eq:GammaChi}) holds, which yields that $\lVert \eta_e \rVert_{\mathcal{H}_2} = \lVert \bm{{\Sigma}} - \bm{\hat{\Sigma}} \rVert_{\mathcal{H}_2} = 0$.
\end{proof}
Introducing the concept of the minimal network realization is to facilitate the computation of a reduced-order model. As finding the minimal network takes much less effort than solving the pseudo Gramians and calculating vertex dissimilarities, see the simulation results in Section \ref{sec:example}.
\begin{rem}
	It is worth mentioning that the minimal network realization is different from the concept of minimality in the sense of the controllability and observability of a state-space model. In a minimal network realization, all the vertices should not only be reachable and  delectable but also have a certain dissimilarity. The minimal network is defined in the viewpoint of graph topologies. It, however, does not necessarily mean that the vertex states have to be controllable and observable. 
\end{rem}
The following proposition concludes the relation between the two types of minimal realizations, namely, the classical minimality is sufficient for network minimality, but not vice versa. 
\begin{pro}
	If the network system $\bm{\Sigma}$ is minimal, then $\bm{\Sigma}$ is also a minimal network realization.
\end{pro}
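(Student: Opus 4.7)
The plan is to prove the contrapositive: assuming $\bm{\Sigma}$ fails to be a minimal network realization, I will show it also fails to be controllable or observable in the classical state-space sense. The three ways in which network minimality can fail are (i) some vertex is unreachable, (ii) some vertex is undetectable, (iii) some clusterable pair of vertices is 0-dissimilar. Cases (i) and (ii) are duals of one another, as are the two sub-cases of (iii), so the work essentially reduces to two arguments.

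For case (i), let $\mathcal{R}\subset\mathbb{V}$ be the set of reachable vertices, assumed to be a strict subset. By Definition \ref{defn:reachable}, every $j\notin\mathcal{R}$ has $f_{jk}=0$ for all inputs $k$; moreover, for any $k'\in\mathcal{R}$ we must have $w_{jk'}=0$, since otherwise the directed edge $k'\to j$ would extend a path from $\mathbb{V}_\mathcal{I}$ to $j$, contradicting $j\notin\mathcal{R}$. After a permutation placing $\mathcal{R}$ first, the pair $(\mathcal{L},F)$ therefore takes the block upper-triangular form
\begin{equation*}
\mathcal{L}=\begin{bmatrix} \mathcal{L}_{RR} & \mathcal{L}_{R\bar R}\\ 0 & \mathcal{L}_{\bar R\bar R}\end{bmatrix},\qquad F=\begin{bmatrix} F_R\\ 0\end{bmatrix},
\end{equation*}
which immediately yields zero rows of the controllability matrix in the $\bar R$-positions, so $\bm{\Sigma}$ is not controllable. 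Case (ii) is proven by the dual argument: undetectability produces an analogous block structure for $(H,\mathcal{L})$ with zero columns of the observability matrix, contradicting observability.

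For case (iii), suppose $i$ and $j$ are clusterable with $\mathcal{D}_{ij}=\mathcal{D}^I_{ij}\mathcal{D}^O_{ij}=0$. Rewriting the input dissimilarity via $(sM+L)^{-1}MF=(sI+\mathcal{L})^{-1}F$, the vanishing of $\mathcal{D}^I_{ij}$ is equivalent to $\mathbf{e}_{ij}^\top e^{-\mathcal{L}t}F=0$ for all $t\geq 0$. Expanding the matrix exponential as a power series in $t$ and matching coefficients gives $\mathbf{e}_{ij}^\top\mathcal{L}^k F=0$ for every $k\geq 0$; since $\mathbf{e}_{ij}\neq 0$, the controllability matrix loses row rank and $(\mathcal{L},F)$ fails to be controllable. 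If instead $\mathcal{D}^O_{ij}=0$, the dual computation yields $H\mathcal{L}^k M^{-1}\mathbf{e}_{ij}=0$ for all $k\geq 0$; with $M$ invertible, the nonzero vector $M^{-1}\mathbf{e}_{ij}$ lies in the unobservable subspace of $(H,\mathcal{L})$. In every sub-case the hypothesis that $\bm{\Sigma}$ is a classical minimal realization is violated, completing the proof by contrapositive. The only point requiring a little care is that, although $\bm\Sigma$ is only semistable (so the $\mathcal{H}_2$-norm is not automatically finite), the analyticity of $t\mapsto e^{-\mathcal{L}t}$ still lets one pass from the vanishing impulse response to the Markov-parameter identities without any stability assumption, which is what makes cases (iii) go through cleanly.
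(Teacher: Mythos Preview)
Your contrapositive argument is correct and, in fact, cleaner and more complete than the paper's own proof. The paper argues only the 0-dissimilar case (iii): assuming minimality, it invokes Theorem~\ref{thm:controlrank} to identify $\mathsf{ker}(\mathcal{P})=\mathsf{im}(V)$ and $\mathsf{ker}(\mathcal{Q})=\mathsf{im}(U)$, and then derives a contradiction between $\mathbf{e}_{ij}\in\mathsf{ker}(\mathcal{P})$ (from $\mathcal{D}^I_{ij}=0$) and the clusterability conditions $\mathbf{e}_{ij}^\top U=0$, $\mathbf{e}_{ij}^\top M^{-1}V=0$. Cases (i) and (ii) are not treated explicitly there, though they are implicitly covered by the same block-triangular reasoning you give.

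The main methodological difference is that you bypass the pseudo-Gramian machinery entirely. For case (iii) you go straight from the vanishing $\mathcal{H}_2$-norm to vanishing Markov parameters $\mathbf{e}_{ij}^\top\mathcal{L}^kF=0$ (resp.\ $H\mathcal{L}^kM^{-1}\mathbf{e}_{ij}=0$) via analyticity of $e^{-\mathcal{L}t}$, and then read off a rank deficiency in the Kalman controllability (resp.\ observability) matrix. This is more elementary and self-contained: it needs neither Theorem~\ref{thm:controlrank} nor the identification of the kernels of $\mathcal{P}$ and $\mathcal{Q}$. The paper's route, by contrast, leans on the Gramian framework developed in Section~\ref{sec:Gramian}, which ties the proposition back to the paper's central tools but at the cost of a less direct argument. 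Your closing remark that semistability does not obstruct the passage from a vanishing impulse response to vanishing Markov parameters is exactly the point needed to make (iii) rigorous, and it is handled correctly.
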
 
\begin{proof}
	The system $\bm{\Sigma}$ in (\ref{sys}) is minimal means that it is both controllable and observable. Consider the pseudo Gramians of $\bm{\Sigma}$, $\mathcal{P}$ and $\mathcal{Q}$, which are defined in (\ref{defn:PseudoGramians}).
	Thereby, we obtain from Theorem \ref{thm:controlrank},  (\ref{eq:VPV}) and (\ref{eq:UQU}) that  
	\begin{equation} \label{eq:nullP}
		\mathsf{rank}(\mathcal{P}) = n-m, \ 
		\text{and} \
		\mathsf{ker}(\mathcal{P}) = \mathsf{im}(V),
	\end{equation}
	and 
	\begin{equation} \label{eq:nullQ}
		\mathsf{rank}(\mathcal{Q}) = n-m, \ 
		\text{and} \
		\mathsf{ker}(\mathcal{Q}) = \mathsf{im}(U),
	\end{equation}
	where $m$ is the number of LSCCs in $\mathcal{G}$, and $U, V \in \mathbb{R}^{n \times m}$ are defined in (\ref{eq:UVdef}). 
	
	Now, assume that (\ref{sys}) is not a minimal network realization, i.e., it contains at least a pair of vertices that are clusterable and 0-dissimilar. Thus, 
	\begin{equation*}
		\mathcal{D}^I_{ij} = \sqrt{\mathbf{e}_{ij}^T\mathcal{P}\mathbf{e}_{ij}} = 0 \ \text{or} \  \mathcal{D}^O_{ij}  = \sqrt{\mathbf{e}_{ij}^T M^{-1} \mathcal{Q}M^{-1}\mathbf{e}_{ij}} = 0,
	\end{equation*} 
 	which means that $\mathbf{e}_{ij} \in \mathsf{ker}(\mathcal{P})$ or $\mathbf{e}_{ij} M^{-1} \in \mathsf{ker}(\mathcal{Q})$. 
 	However, Lemma \ref{lem:clusterability} implies that for clusterable vertices $i,j$, $\mathbf{e}_{ij}^T U = 0$ and $\mathbf{e}_{ij}^T M^{-1} V = 0$. They mean that
 	$\mathbf{e}_{ij} \notin \mathsf{im}(U)$ and $\mathbf{e}_{ij} \notin \mathsf{im}(M^{-1} V) $, which
	 cause contradictions to (\ref{eq:nullP}). 
	
	Therefore, 0-dissimilar vertices cannot exist in a directed network system that is controllable and observable.
\end{proof}
A simple counterexample for the converse statement is given by a network example with only two vertices:
\begin{equation}
\mathcal{L} = \begin{bmatrix}
	1 & -1 \\ -1 & 1
\end{bmatrix}, \ F = H^T = \begin{bmatrix}
1 & -1
\end{bmatrix}.
\end{equation}
Both vertices are reachable and detachable by Definition \ref{defn:reachable}, and the dissimilarity of the two vertices is $\mathcal{D}_{12} = 1$. Thus, this system is a minimal network realization. However, $\mathsf{rank} \left[ F, -LF \right] = \mathsf{rank} \left[H^T, - L^T H^T  \right] = 1$ implies that the system is uncontrollable and unobservable.

\subsection{Clustering Algorithm \& Error Computation}

This subsection provides the algorithm for selecting an appropriate clustering for the network system $\bm{\Sigma}$. To this end, a \textit{distance graph} of the network system $\bm{\Sigma}$ is defined. 
Let $\mathcal{X}$ be a matrix whose $(i,j)$-entry is given by
\begin{equation} \label{eq:Xij}
	\mathcal{X}_{ij} = \left\{
	\begin{array}{ll}
		\mathcal{D}_{ij}^{-1}, & \text{if vertices $i,j$ are clusterable;} \\
		0, & \text{otherwise.}
	\end{array}
	\right.	
\end{equation}
A larger value of $\mathcal{X}_{ij} \in \mathbb{R}_+$ indicates a higher similarity between vertices $i,j$. Clearly, $\mathcal{X}$ is nonnegative, symmetric and has zero diagonal entries. Thus, it can be seen as a weighted adjacency matrix that characterizes an undirected disconnected graph, namely a distance graph, denoted by $\mathcal{G}_D$. Denote the Laplacian matrix of $\mathcal{G}_D$ by
\begin{equation} \label{eq:undirectedLaplacian}
	L_D = L_D^T = \mathsf{diag}(\mathcal{X} \mathds{1}) - \mathcal{X}.
\end{equation}
Note that $\mathcal{G}_D$ shares the same vertex set $\mathbb{V}$ with the original directed network $\mathcal{G}$. From e.g., \cite{Godsil2013AlgebraicGraph}, the \textit{rank} of the undirected graph $\mathcal{G}_D$ is defined by $n - c$ with $n = |\mathbb{V}|$ and $c$ the number of connected components of $\mathcal{G}_D$ which satisfies $c = \mathsf{rank}(L_D)$. 

The idea of clustering algorithm is to remove certain edges of $\mathcal{G}_D$ such that it leaves $r$ connected components, i.e., $\mathbb{V}$ is partitioned into $r$ cells. Then we aggregate vertices in each cell as one vertex. Denote $\Omega: = \mathsf{diag}(\omega_1, \omega_1, \cdots, \omega_{n_e})$, where $n_e$ is the number of the edges of $\mathcal{G}_D$, and $\omega_1 \geq \omega_2 \geq \cdots \geq \omega_{n_e} > 0$ are the descending sort of the edge weights $\mathcal{X}_{ij}$. Let $\mathcal{B} \in \mathbb{R}^{n \times n_e}$ be the incidence matrix of $L_D$, then the graph clustering problem becomes finding a matrix partition 
\begin{equation} \label{eq:partitionLD}
	 L_D = \mathcal{B} \Omega \mathcal{B}^T = 
	 \begin{bmatrix}
	 	\mathcal{B}_1 & \mathcal{B}_2
	 \end{bmatrix}
	 \begin{bmatrix}
	 	\Omega_1 & 0 \\ 0 & \Omega_2
	 \end{bmatrix}
	 \begin{bmatrix}
	 	\mathcal{B}_1^T \\ \mathcal{B}_2^T
	 \end{bmatrix}
\end{equation}
such that $\mathsf{rank}(\mathcal{B}_1 \Omega_1 \mathcal{B}_1^T) = n - r$, where $r$ is the desired reduction order. $\mathcal{B}_2$ corresponds the edges of $\mathcal{G}_D$ with lower weights, namely, the edges connecting higher dissimilar vertices, and thus these edges are supposed to be removed. Thereby, the partition of $\mathbb{V}$ is generated by assign the vertices that are connected by the edges indicated by $\mathcal{B}_1$ into the same cell. Specifically, we describe the proposed clustering-based model reduction method as follows.

%
%
%
%
%
%

\begin{algorithm} 
	\caption{Clustering-Based Reduction Algorithm}
	\begin{algorithmic}[1] 
		\Input $\mathcal{L}$, $F$ and $H$, desired order $r$
		\Output $\hat{\mathcal L}$, $\hat{F}$, and $\hat{H}$
		
		\State Remove all the unreachable and undetectable vertices. 
		
		\State Find and merge the 0-dissimilar vertices by Proposition \ref{pro:0dis}.
		
		\State Compute the pseudo Gramians $\mathcal{P}$ and $\mathcal{Q}$ using Theorem \ref{thm:GenGram} and Corollary \ref{coro:GenGram}.
		
		\State Calculate the dissimilarity between clusterable vertices by Theorem \ref{thm:normcompute}, and construct the Laplacian matrix $L_D$ in (\ref{eq:undirectedLaplacian}) for the distance graph.
  
		\State Find a matrix partition (\ref{eq:partitionLD}) such that $$\mathsf{rank}(\mathcal{B}_1 \Omega_1 \mathcal{B}_1^T) = n - r.$$
	 
		\State Compute $\Pi \in \mathbb{R}^{n \times r}$ according to $\mathcal{B}_1$.
		
		\State Generate $\hat{\mathcal{L}}$ and $\hat{F}$ using the projection $\Pi$ as in (\ref{sysr}).
	\end{algorithmic}
	\label{alg}
\end{algorithm}

%
%
%
%
%

By Algorithm \ref{alg}, a reduced-order network system $\bm{\hat{\Sigma}}$ is obtained, which achieves a bounded approximation error, see Theorem \ref{thm:errbounded}. Based on the proposed pseudo Gramians in Section \ref{sec:Gramian}, the following theorem then provides an efficient method to compute the $\mathcal{H}_2$ error between the original and the reduced-order network systems.  
\begin{thm} \label{thm:errcomp}
	Let $\mathcal{P}_o$ and $\mathcal{P}_r$ be the pseudo controllability Gramians of the full-order and reduced-order network systems, $\bm{\Sigma}$ and $\bm{\hat{\Sigma}}$, respectively. Then, the approximation error between $\bm{\Sigma}$ and $\bm{\hat{\Sigma}}$ is computed as
	\begin{equation} \label{eq:errcomp}
		 \lVert \bm{\Sigma} - \bm{\hat{\Sigma}} \rVert_{\mathcal{H}_2}^2 =
		 \mathsf{tr} \left[H(\mathcal{P}_o  + \Pi\mathcal{P}_r\Pi^T - 2 \Pi\mathcal{P}_x){H}^T\right],
	\end{equation} 
	where $\mathcal{P}_x: = \tilde{\mathcal{P}}_x - \Pi^\dagger \mathcal{J} \Pi \tilde{\mathcal{P}}_x \mathcal{J}^T \in \mathbb{R}^{r \times n}$ with $ \tilde{\mathcal{P}}_x$ an arbitrary symmetric solution of the following Sylvester equation:
	\begin{equation} \label{eq:SylvX}
		\hat{\mathcal{L}}^T \tilde{\mathcal{P}}_x + \tilde{\mathcal{P}}_x  \mathcal{L}^T  - \Pi^\dagger (I - \mathcal{J}) F F^T (I - \mathcal{J}^T) = 0.
	\end{equation}
\end{thm}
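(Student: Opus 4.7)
The strategy is to form the cascaded error system $\bm{\Sigma}_e := \bm{\Sigma}-\bm{\hat{\Sigma}}$ and apply the $\mathcal{H}_2$ identity of Theorem~\ref{thm:trace}. Stack the states $(x,z)$ into a single representation with $A_e = \mathsf{blkdiag}(-\mathcal{L},-\hat{\mathcal{L}})$, $B_e = \bigl[F^T,\ F^T \Pi^{\dagger T}\bigr]^T$ and $C_e = [H,\ -H\Pi]$, so that $\mathcal{J}_e = \mathsf{blkdiag}(\mathcal{J},\hat{\mathcal{J}})$ with $\hat{\mathcal{J}} = \Pi^\dagger \mathcal{J}\Pi$ by \eqref{eq:Jhat}. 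A direct computation gives $C_e\mathcal{J}_e B_e = H\mathcal{J}F - H\Pi\Pi^\dagger \mathcal{J}\Pi\Pi^\dagger F$, which vanishes because of \eqref{eq:thmkey} established in Theorem~\ref{thm:errbounded}. Hence $\bm{\Sigma}_e\in\mathcal{H}_2$ by Theorem~\ref{thm:trace}, and $\lVert\bm{\Sigma}-\bm{\hat{\Sigma}}\rVert_{\mathcal{H}_2}^2 = \mathsf{tr}(C_e \mathcal{P}_e C_e^T)$, where $\mathcal{P}_e$ denotes the pseudo controllability Gramian of the error system.

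Next, I partition $\mathcal{P}_e$ conformably into a $2\times 2$ block matrix with blocks $\mathcal{P}_{11},\mathcal{P}_{12},\mathcal{P}_{22}$ and read off the three block equations from \eqref{eq:thm2eqs} applied to $\bm{\Sigma}_e$. The $(1,1)$ block reproduces the defining equations of $\mathcal{P}_o$ verbatim. For the $(2,2)$ block, the identity $(I-\hat{\mathcal{J}})\Pi^\dagger F = \Pi^\dagger(I-\mathcal{J})F$ (obtained inside the proof of Theorem~\ref{thm:errbounded}) reduces the forcing term to $\Pi^\dagger(I-\mathcal{J})FF^T(I-\mathcal{J}^T)\Pi^{\dagger T}$, which matches the equation characterizing $\mathcal{P}_r$; uniqueness from Theorem~\ref{thm:GenGram} then forces $\mathcal{P}_{11}=\mathcal{P}_o$ and $\mathcal{P}_{22}=\mathcal{P}_r$. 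Expanding $\mathsf{tr}(C_e\mathcal{P}_e C_e^T)$ and collapsing the two off-diagonal contributions via cyclic trace invariance yields $\mathsf{tr}\bigl[H(\mathcal{P}_o + \Pi\mathcal{P}_r\Pi^T - 2\Pi\mathcal{P}_{12}^T)H^T\bigr]$, which matches \eqref{eq:errcomp} once I identify $\mathcal{P}_x := \mathcal{P}_{12}^T\in\mathbb{R}^{r\times n}$.

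It remains to characterize $\mathcal{P}_x$ by \eqref{eq:SylvX}. Transposing the $(1,2)$ block of the Lyapunov part of \eqref{eq:thm2eqs} applied to $\bm{\Sigma}_e$ yields an inhomogeneous Sylvester equation of the form displayed in \eqref{eq:SylvX}; likewise, the $(1,2)$ block of \eqref{eq:VPV}, together with $\hat{\mathcal{J}} = \Pi^\dagger \mathcal{J}\Pi$, produces the selection constraint $\Pi^\dagger\mathcal{J}\Pi\,\mathcal{P}_x\,\mathcal{J}^T = 0$. The main obstacle is then the uniqueness step, which parallels Corollary~\ref{coro:GenGram}: because both $-\mathcal{L}$ and $-\hat{\mathcal{L}}$ are singular the Sylvester equation admits infinitely many solutions, so I must show that the proposed correction $\mathcal{P}_x = \tilde{\mathcal{P}}_x - \Pi^\dagger\mathcal{J}\Pi\,\tilde{\mathcal{P}}_x\,\mathcal{J}^T$ simultaneously (i) still solves the Sylvester equation and (ii) enforces the constraint. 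Part (i) will follow from $\hat{\mathcal{L}}\hat{\mathcal{J}} = 0$ (equivalently $\hat{\mathcal{L}}\,\Pi^\dagger\mathcal{J}\Pi = 0$) and $\mathcal{J}^T\mathcal{L}^T = (\mathcal{L}\mathcal{J})^T = 0$ from \eqref{eq:J2=J}, which annihilate the two extra terms produced by substitution; part (ii) will follow from the idempotencies $\hat{\mathcal{J}}^2 = \hat{\mathcal{J}}$ and $\mathcal{J}^2 = \mathcal{J}$, which force $\Pi^\dagger\mathcal{J}\Pi\,\mathcal{P}_x\,\mathcal{J}^T = \Pi^\dagger\mathcal{J}\Pi\,\tilde{\mathcal{P}}_x\,\mathcal{J}^T - \Pi^\dagger\mathcal{J}\Pi\,\tilde{\mathcal{P}}_x\,\mathcal{J}^T = 0$. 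This identifies $\mathcal{P}_{12}^T$ with $\mathcal{P}_x$ as defined in the statement and closes the proof.
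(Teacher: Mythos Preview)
Your proposal is correct and follows essentially the same route as the paper: form the stacked error system, invoke Theorem~\ref{thm:trace} for the $\mathcal{H}_2$ identity, partition the pseudo controllability Gramian into $2\times 2$ blocks, identify the diagonal blocks with $\mathcal{P}_o,\mathcal{P}_r$, and recover $\mathcal{P}_x$ from the off-diagonal block via the Sylvester-plus-correction formula. The only organizational difference is that the paper applies Corollary~\ref{coro:GenGram} directly to the full error system (so the block formula $\mathcal{P}_x=\tilde{\mathcal{P}}_x-\hat{\mathcal{J}}\tilde{\mathcal{P}}_x\mathcal{J}^T$ drops out immediately from $\mathcal{P}_e=\tilde{\mathcal{P}}_e-\mathcal{J}_e\tilde{\mathcal{P}}_e\mathcal{J}_e^T$), whereas you verify (i) and (ii) for the off-diagonal block separately; your route is fine but requires one extra sentence noting that the correction is independent of the choice of $\tilde{\mathcal{P}}_x$ (equivalently, that the pair Sylvester\,+\,constraint determines $\mathcal{P}_x$ uniquely), which follows from the uniqueness in Theorem~\ref{thm:GenGram} applied to $\bm{\Sigma}_e$.
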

\begin{proof}
	To derive the approximation error, we consider the following system.
	\begin{equation} \label{syserr}
		\bm{\Sigma_\mathsf{e}} : \left \{
		\begin{array}{l}
		\dot{\omega}(t) = \mathcal{A}(t) \omega(t) + \mathcal{B} u(t), \\
		\delta(t) =  \mathcal{C} \omega(t),
		\end{array}
		\right.
	\end{equation}
	where the state vectors ${\omega}(t)^T: =  \left[x(t)^T, \hat{x}(t)^T\right]^T$, $\delta(t) : = y(t) - \hat{y}(t)$, and
	\begin{equation*} 
		\mathcal{A} = -\left[ \begin{matrix}
		\mathcal{L} & 0 \\ 0 & \hat{\mathcal L}
		\end{matrix}\right],\ 
		\mathcal{F} = \left[ \begin{matrix}
		F \\ \hat{F}
		\end{matrix}\right],\ 
		\mathcal{H} = \begin{bmatrix}
		H & -\hat{H}
		\end{bmatrix}.
	\end{equation*}
	Obviously, $\lVert \bm{\Sigma_\mathsf{e}} \rVert_{\mathcal{H}_2} = \lVert \bm{\Sigma} - \bm{\hat{\Sigma}} \rVert_{\mathcal{H}_2}$. Observe that $\bm{\Sigma_\mathsf{e}}$ is a semistable system
	due to $\mathcal{A}$ matrix in (\ref{syserr}), and 
	by Theorem \ref{thm:errbounded}, the $\mathcal{H}_2$ norm of $\bm{\Sigma_\mathsf{e}}$ is bounded. From Lemma \ref{lem:trace}, we obtain
	\begin{equation} 
	\lVert \bm{\Sigma_\mathsf{e}} \rVert_{\mathcal{H}_2}^2 = \mathsf{tr}(\mathcal{H} \mathcal{P}_e \mathcal{H}^T),
	\end{equation} 
	where $\mathcal{P}_e$ is the pseudo controllability Gramian of the error system $\bm{\Sigma_\mathsf{e}}$.

	To obtain $\mathcal{P}_e$, we refer to Theorem \ref{thm:GenGram} that $\mathcal{P}_e$ is a solution of the Lyapunov equation
	\begin{equation} \label{eq:LyapEqErrSys}
		\mathcal{A} \tilde{\mathcal{P}}_e +  \tilde{\mathcal{P}}_e \mathcal{A}^T + (I-\mathcal{J}_e) \mathcal{F} \mathcal{F}^T (I-\mathcal{J}_e^T) = 0.
	\end{equation}
	Here, $\mathcal{J}_e : = \text{blkdiag}(\mathcal{J}, \hat{\mathcal{J}})$ where $\hat{\mathcal{J}} = \Pi^\dagger \mathcal{J} \Pi$ is implied by (\ref{eq:thmkey}). 
	Let
	\begin{equation}
		\tilde{\mathcal{P}}_e = \begin{bmatrix}
		\tilde{\mathcal{P}}_o & \tilde{\mathcal{P}}_x^T \\\tilde{\mathcal{P}}_x & \tilde{\mathcal{P}}_r
		\end{bmatrix}, 
	\end{equation}
	with $\tilde{\mathcal{P}}_o \in \mathbb{R}^{n \times n}$ and $\tilde{\mathcal{P}}_r \in \mathbb{R}^{r \times r}$. Accordingly, (\ref{eq:LyapEqErrSys}) is partitioned into three equations:
	\begin{subequations} \label{eq:3LyapEqs}
	\begin{empheq}[left=\empheqlbrace]{align}
	\mathcal{L} \tilde{\mathcal{P}}_o + \tilde{\mathcal{P}}_o \mathcal{L}^T - (I_n-\mathcal{J})FF^T(I_n-\mathcal{J}^T) &= 0,  \label{eq:LyapO} 
	\\
	\hat{\mathcal{L}} \tilde{\mathcal{P}}_r + \tilde{\mathcal{P}}_r \hat{\mathcal{L}}^T - (I_r-\hat{\mathcal{J}})\hat{F}\hat{F}^T(I_r-\hat{\mathcal{J}}^T) &= 0,
	\label{eq:LyapR} 
	\\
	\hat{\mathcal{L}} \tilde{\mathcal{P}}_x + \tilde{\mathcal{P}}_x {\mathcal{L}}^T - (I_r-\hat{\mathcal{J}})\hat{F}{F}^T(I_n-{\mathcal{J}}^T) &= 0,
	\label{eq:LyapX} 
	\end{empheq}
	\end{subequations}
	where (\ref{eq:LyapX}) is equivalent to (\ref{eq:SylvX}) due to $\hat{F} = \Pi^\dagger F$ and $\hat{\mathcal{J}} \Pi^\dagger = \Pi^\dagger \mathcal{J} \Pi \Pi^\dagger = \Pi^\dagger \mathcal{J}$ (see (\ref{eq:UVprop})). Then, using Corollary \ref{coro:GenGram},
	the pseudo controllability Gramian of $\bm{\Sigma_\mathsf{e}}$ is computed.
	\begin{equation*}
	\begin{split}
		\mathcal{P}_e &= \tilde{\mathcal{P}}_e - \mathcal{J}_e \tilde{\mathcal{P}}_e \mathcal{J}_e^T 
		\\&= 
		\begin{bmatrix}
		\tilde{\mathcal{P}}_o - \mathcal{J} \tilde{\mathcal{P}}_o \mathcal{J}^T
		& 
		\tilde{\mathcal{P}}_x^T - {\mathcal{J}} \tilde{\mathcal{P}}_x^T \hat{\mathcal{J}}^T
		\\
		\tilde{\mathcal{P}}_x - \hat{\mathcal{J}} \tilde{\mathcal{P}}_x {\mathcal{J}}^T
		&
		\tilde{\mathcal{P}}_r - \hat{\mathcal{J}} \tilde{\mathcal{P}}_r \hat{\mathcal{J}}^T
		\end{bmatrix} 
		:= \begin{bmatrix}
		{\mathcal{P}}_o & {\mathcal{P}}_x^T \\{\mathcal{P}}_x & {\mathcal{P}}_r
		\end{bmatrix},
	\end{split}
	\end{equation*}
 	where $\mathcal{P}_o$ and $\mathcal{P}_r$ are the pseudo controllability Gramians of the systems $\bm{\Sigma}$ and $\hat{\bm{\Sigma}}$, respectively.
 	
 	Thereby, we evaluate the approximation error as follows. 
 	\begin{equation}
 	\begin{split}
  		\lVert \bm{\Sigma} - \bm{\hat{\Sigma}} \rVert_{\mathcal{H}_2}^2 & =
 		\mathsf{tr}\left(\begin{bmatrix}
 			H & -\hat{H}
 		\end{bmatrix}
 		\begin{bmatrix}
 		{\mathcal{P}}_o & {\mathcal{P}}_x^T \\{\mathcal{P}}_x & {\mathcal{P}}_r
 		\end{bmatrix}
 		\begin{bmatrix}
 			H^T \\ -\hat{H}^T
 		\end{bmatrix}\right)
 		\\
 		&=\mathsf{tr} (H {\mathcal{P}}_o H^T + \hat{H} {\mathcal{P}}_r \hat{H}^T - 2\hat{H}{\mathcal{P}}_x H^T),
 	\end{split}
 	\end{equation}
 	which leads to (\ref{eq:errcomp}).
\end{proof}
Notice that the error in (\ref{eq:errcomp}) can be also characterized by pseudo observability Gramians. Suppose $\mathcal{Q}_o$ and $\mathcal{Q}_r$ are the pseudo observability Gramians of $\bm{\Sigma}$ and $\bm{\hat{\Sigma}}$, respectively. Then, an alternative computation for (\ref{eq:errcomp}) is
\begin{equation}  
\lVert \bm{\Sigma} - \bm{\hat{\Sigma}} \rVert_{\mathcal{H}_2}^2 =
	\mathsf{tr} \left[F^T(\mathcal{Q}_o  + (\Pi^\dagger)^T \mathcal{Q}_r\Pi^\dagger - 2 \mathcal{Q}_x\Pi^\dagger){F}\right],
\end{equation} 
where $\mathcal{Q}_x: = \tilde{\mathcal{Q}}_x - \mathcal{J}^T \tilde{\mathcal{Q}}_x  \Pi^\dagger  \mathcal{J} \Pi \in \mathbb{R}^{n \times r}$ with $ \tilde{\mathcal{Q}}_x$ an symmetric solution of the Sylvester equation:
\begin{equation}  
	\mathcal{L}^T \tilde{\mathcal{Q}}_x +  \tilde{\mathcal{Q}}_x \hat{\mathcal{L}} - (I - \mathcal{J}^T) H^T H (I - \mathcal{J})\Pi = 0.
\end{equation}
The proof for the above statement is similar to Theorem \ref{thm:errcomp} and thus is omitted.

\section{Numerical Examples} \label{sec:example}

\subsection{Sensor Network}
A sensor network in \cite{XiaodongCDC2017Digraph} is considered and adapted to illustrate the proposed method in this paper. The topology of the studied directed network is depicted in Fig. \ref{fig:ExOrigin}, which is weakly connected and contains three LSCCs: $\{1,2,3,4,5\}$, $\{15,16\}$ and $\{20\}$. All the directional edges have identical weights that equal to $1$. The sets $\mathbb{V}_\mathcal{I}: = \{2, 7, 16\}$ and $\mathbb{V}_\mathcal{O}: = \{5, 9, 11\}$ are collections of controlled and measured vertices, respectively. The minimal network is realized by two steps. First, the unreachable vertex $20$ and  undetectable vertices $17$, $18$ and $19$. Then, three pairs of 0-dissimilar vertices, namely $\{1,5\}$, $\{8,9\}$, and $\{13,14\}$ are found by Proposition \ref{pro:0dis} and clustered to obtain Fig. \ref{fig:ExMin}, which is still weakly connected as there are two LSCCs.  

By considering the effects of both inputs and outputs, we compute the vertex dissimilarities of the minimal network using Theorem \ref{thm:normcompute}.
Then, to yield a reduced-order network system of 7 vertices, we apply Algorithm \ref{alg}, 
that leads to the following graph clustering
\begin{equation*}
	\begin{split}
		\mathcal{C}_1 &= \{1,2,3, 4\}, \
		\mathcal{C}_2 = \{5, 6\}, \ \mathcal{C}_3 = \{7\}, \\
		\mathcal{C}_4 &= \{8\}, \ \mathcal{C}_5=\{9\}, \
		\mathcal{C}_6=\{10,11\}, \
		\mathcal{C}_7=\{12,13\}.
	\end{split} 
\end{equation*}
Thus, a simplified directed network is constructed as in Fig. \ref{fig:ExRedNet1}, which indicates that the network structure is preserved in the new model. Also, the approximation error is computed: $\lVert \bm{\Sigma} - \bm{\hat{\Sigma}} \rVert_{\mathcal{H}_2} = 0.1969$. 
Next, we only use the input information for selecting clusters, namely, only the input dissimilarities are considered as the criteria for the partition of the vertices. As a result, we obtain a different clustering of the minimal network, where $\mathcal{C}_2 = \{5, 7\}$, and  $\mathcal{C}_3 = \{6\}$. This produces a simplified directed network with a different topology as shown in Fig. \ref{fig:ExRedNet2}.
In this case, the approximation error is evaluated as
$\lVert \bm{\Sigma} - \bm{\hat{\Sigma}} \rVert_{\mathcal{H}_2} = 0.2514$, which is almost $30\%$ larger than the one obtained in the former case. Thus, to better approximate the input-output behavior of a network system, the output distributions should also be considered in order to construct a more accurate reduced-order network model. 

\begin{figure}[!tp] 
	\begin{minipage}[t]{0.5\linewidth}
		\centering
		\includegraphics[scale=.6]{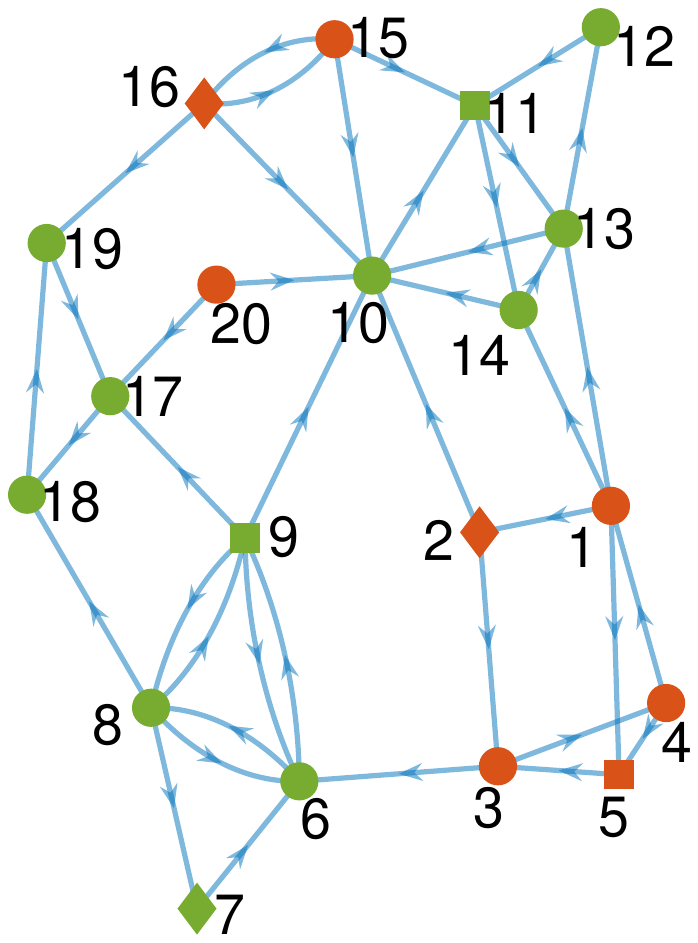}	
		\subcaption{}
		\label{fig:ExOrigin}
	\end{minipage}%
	\begin{minipage}[t]{0.5\linewidth}
		\centering
		\includegraphics[scale=.6]{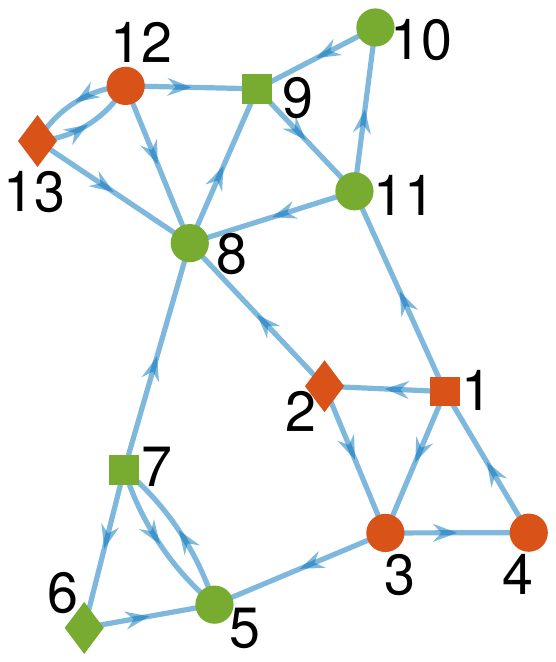}
		\subcaption{}
		\label{fig:ExMin}
	\end{minipage}
	\caption{(a) A directed sensor network consisting of 20 vertices. (b) A minimal network realization of this directed network, which is obtained by removing vertices $17$, $18$, $19$, $20$ and merging pairs $\{1,5\}$, $\{8,9\}$, $\{13,14\}$ in Fig. \ref{fig:ExOrigin}. In both figures, the controlled and measured vertices are labeled as diamonds and squares, respectively. }
\end{figure}

\begin{figure}[!tp]\centering
	\begin{minipage}[t]{0.5\linewidth}
		\centering
		\includegraphics[width=0.48\textwidth]{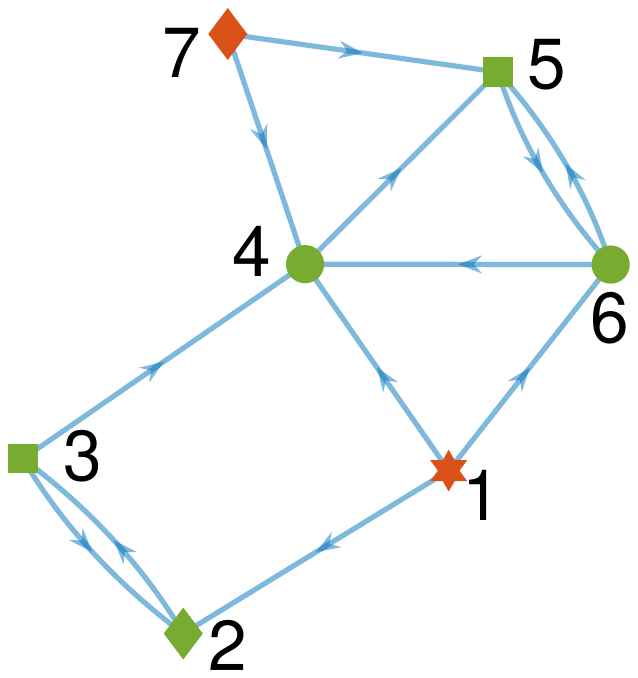}
		\subcaption{}
		\label{fig:ExRedNet1}
	\end{minipage}%
	\begin{minipage}[t]{0.5\linewidth}
		\centering
		\includegraphics[width=0.52\textwidth]{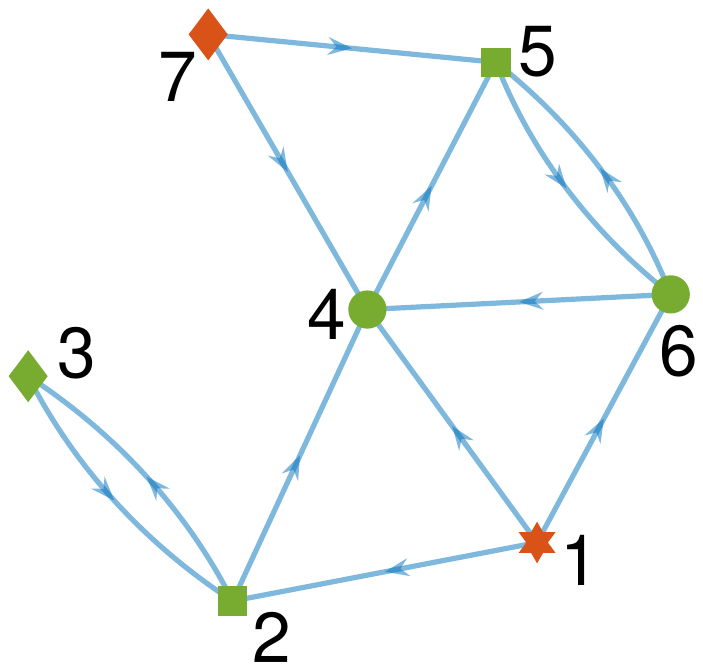}
		\subcaption{}
		\label{fig:ExRedNet2}
	\end{minipage}
	\caption{(a) The reduced directed network generated by the proposed dissimilarity-based clustering. (b) The alternative directed network obtained by only considering the input dissimilarity. In the two figures, vertex $1$ is labeled as a hexagram indicating that it is controlled and measured simultaneously.}	
\end{figure}

\subsection{Large-scale Directed Network}

The efficiency of the proposed approach is verified by
a large-scale directed network example, see Fig. \ref{fig:BigOrigin}. Note that the methods in \cite{XiaodongCDC2017Digraph,ishizaki2015clustereddirected} are not applicable for this networks, since it is not strongly connected. The data of directed weighted graph is acquired from
The Harwell-Boeing Sparse Matrix Collection, which is available at   \url{https://math.nist.gov/MatrixMarket/data/Harwell-Boeing}. In this paper, the simulation is
implemented with Matlab 2018a in the environment of 64-bit
operating system, which is equipped with Intel Core i5-3470
CPU @ 3.20GHz, RAM 8.00 GB. 

We select three controlled and three measured vertices: $\mathbb{V}_\mathcal{I}: = \{1, 152, 728\}$ and $\mathbb{V}_\mathcal{O}: = \{246,615,733\}$. For comparison purposes, we reduce the original networks by the dissimilarity-based (proposed method in this paper), input dissimilarity-based (the methods in e.g., \cite{XiaodongCDC2017Digraph}) and random clustering methods, respectively. The $\mathcal{H}_2$-norm approximation errors of the reduced models with different dimensions from 10 vertices to 700 are computed by Theorem \ref{thm:errcomp}, see the Fig. \ref{fig:BigErr}, which shows that the proposed method considering the output efforts as well generally has a better performance than the one only take into account the influence of inputs. The approximation errors of both methods decay rapidly when the reduced order $r < 100$, and they both have distinct advantage over the random clustering method, where the cells are selected randomly from the largest clusterable sets. 
Note that when $r = 100$, the approximation errors $\lVert \bm{\Sigma} - \bm{\hat{\Sigma}} \rVert_{\mathcal{H}_2} = 0.0011$, while the maximal values of input and output dissimilarities are $0.8839$ and $3.6949$, respectively. Hence, the reduced-order network with $100$ vertices provides a rather accurate approximation of the original 735-vertex network. To illustrate the efficiency of the proposed method, when producing the 100-dimensional reduced network, we record and list the computational cost for each step in Table \ref{Table}. It indicates that the majority of the computation time is spent on solving the pseudo Gramians. In contrast, the time for computing the minimal network, vertex dissimilarities and for the clustering algorithm is much less.
To be more efficient for even larger networks, e.g., $n>5000$, we can consider the Alternating Direction Implicit (ADI) method or Krylov subspace method to generate low rank approximations to the solutions of the Lyapunov equations in (\ref{eq:thm2eqs}) and (\ref{eq:thm2eqsQ}). As these numerical algorithms are more effective than the standard approach due to the sparsity of the Laplacian matrix $\mathcal{L}$. We refer to e.g., \cite{Li2002LowRank,Jaimoukha1994krylovLyap} for more details. However, we do not apply these here since we can still compute the Gramians using standard approach within a reasonable time. Hence, it is no need for such an approximation.

In Fig. \ref{fig:reducednetwork}, the topologies of  reduced-order networks with different dimensions are plotted to demonstrate the preservation of directed network structures. In conclusion, this simulation example shows that
the proposed clustering method is feasible and effective in
model reduction of large-scale directed network systems.

\begin{figure}[!tp] 
	\centering
	\includegraphics[scale=.8]{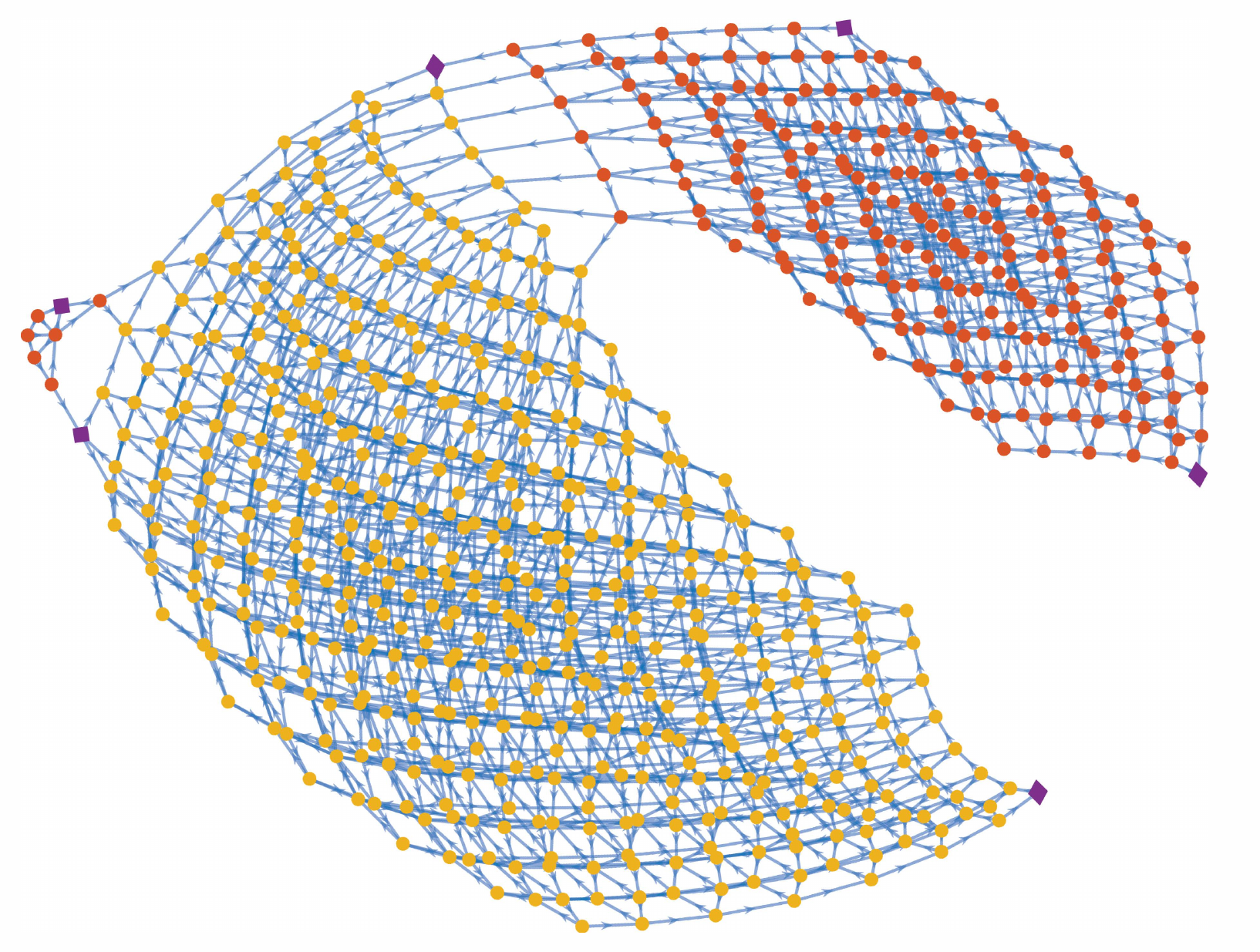}	
	\caption{A weakly connected directed network consisting of 735 vertices, where the controlled and measured vertices are labeled as diamonds and squares, respectively.}
	\label{fig:BigOrigin}
\end{figure}

\begin{figure}[!tp] 
	\centering
	\includegraphics[scale=.6]{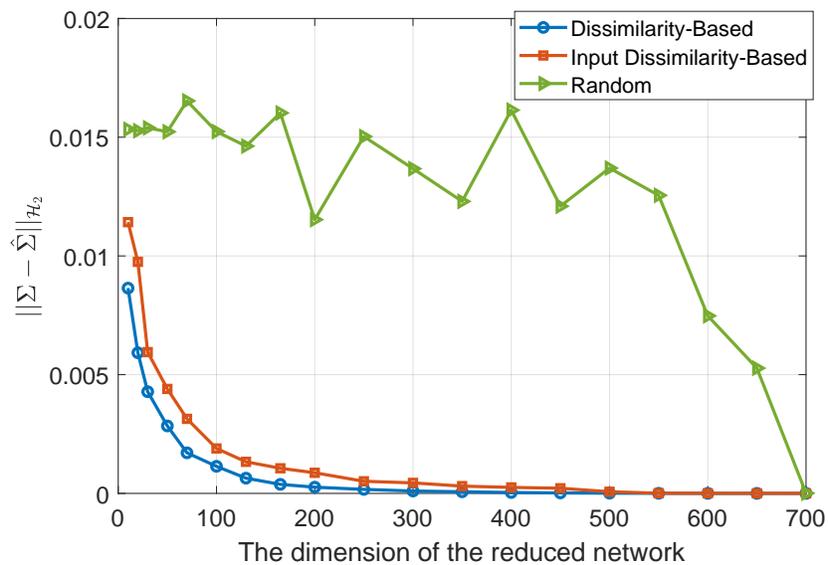}	
	\caption{Approximation error comparisons among the proposed dissimilarity-based clustering, input dissimilarity-based clustering and random clustering algorithms.}
	\label{fig:BigErr}
\end{figure}

\begin{table}[!tp]  
	\caption{Computation time for each step}
	\centering 
	\renewcommand\arraystretch{1.5}  
	\begin{tabular}{l | l} 
		\hline
		Minimal Network & 23.8949s\\
		Pseudo Gramians & 538.0895s\\ Dissimilarity & 99.5960s\\
		Graph Clustering & 0.0035s \\
		\hline  
	\end{tabular}
	\label{Table}
\end{table}

\begin{figure}[!tp]\centering
	\begin{minipage}[t]{0.5\linewidth}
		\centering
		\includegraphics[width=0.8\textwidth]{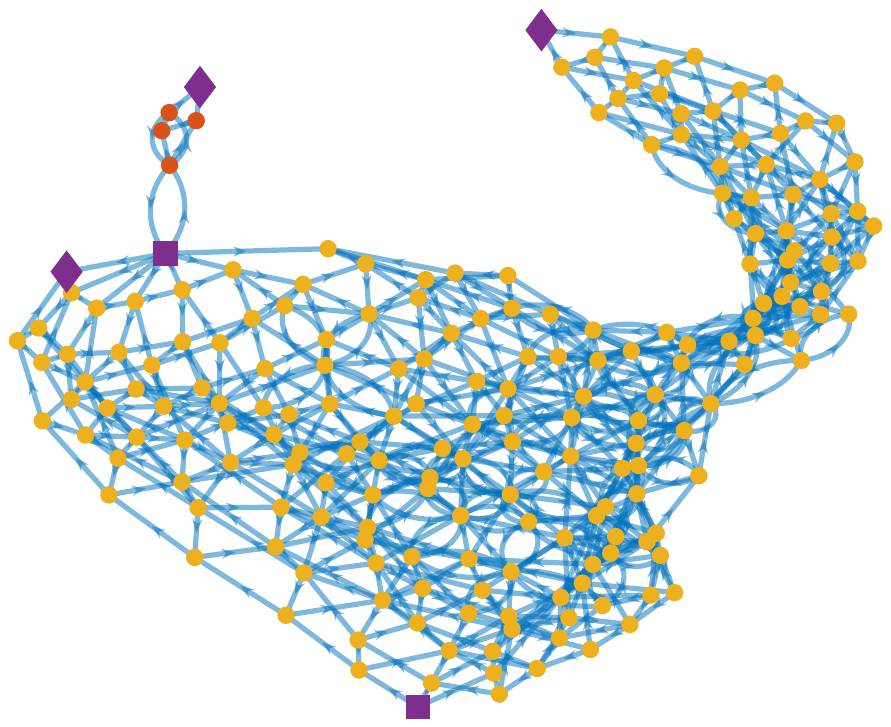}	
		\subcaption{}
	\end{minipage}%
	\hfill
	\begin{minipage}[t]{0.5\linewidth}
		\centering
		\includegraphics[width=0.8\textwidth]{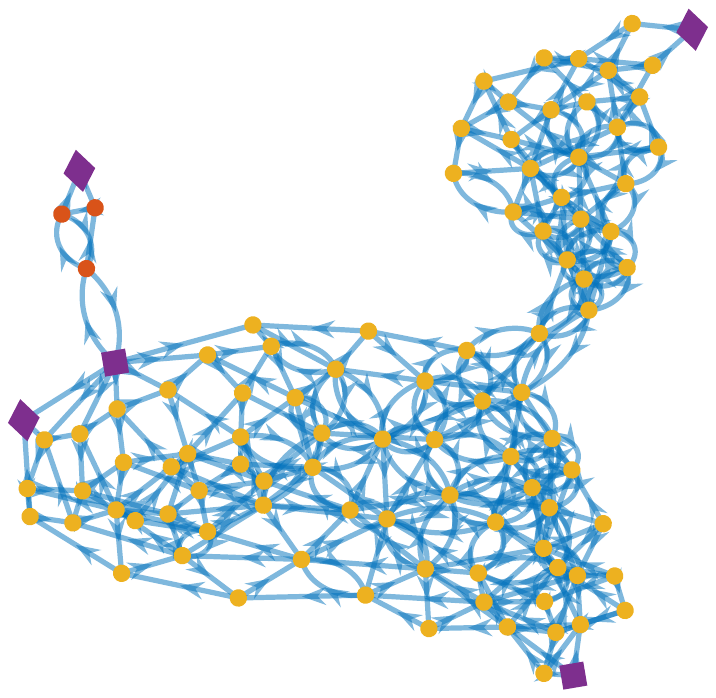}
		\subcaption{}
	\end{minipage}
	\vfill
	\begin{minipage}[t]{0.5\linewidth}
		\centering
		\includegraphics[width=0.8\textwidth]{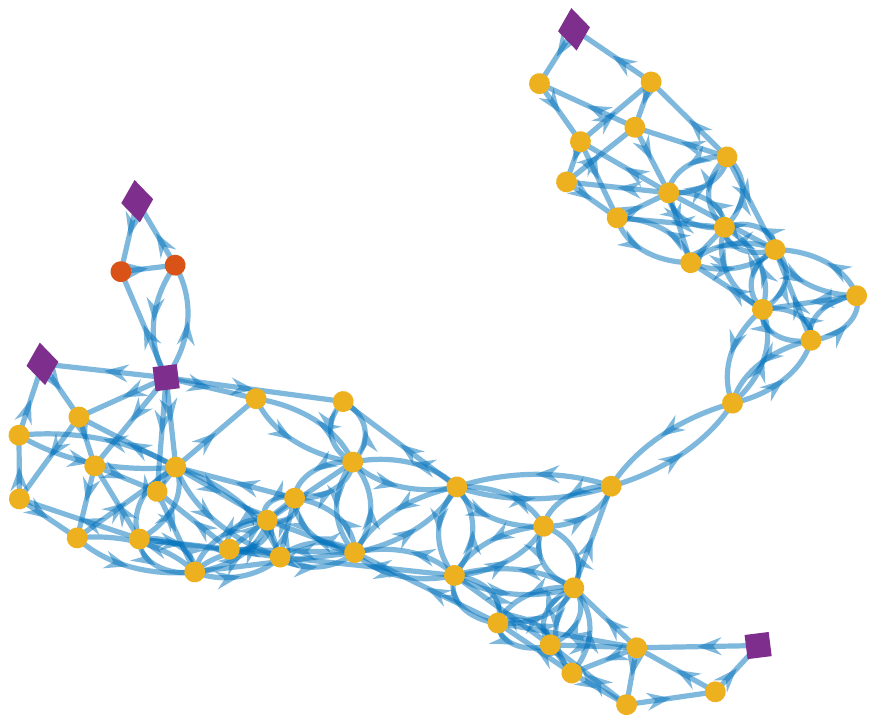}	
		\subcaption{}
	\end{minipage}%
	\hfill
	\begin{minipage}[t]{0.5\linewidth}
		\centering
		\includegraphics[width=0.7\textwidth]{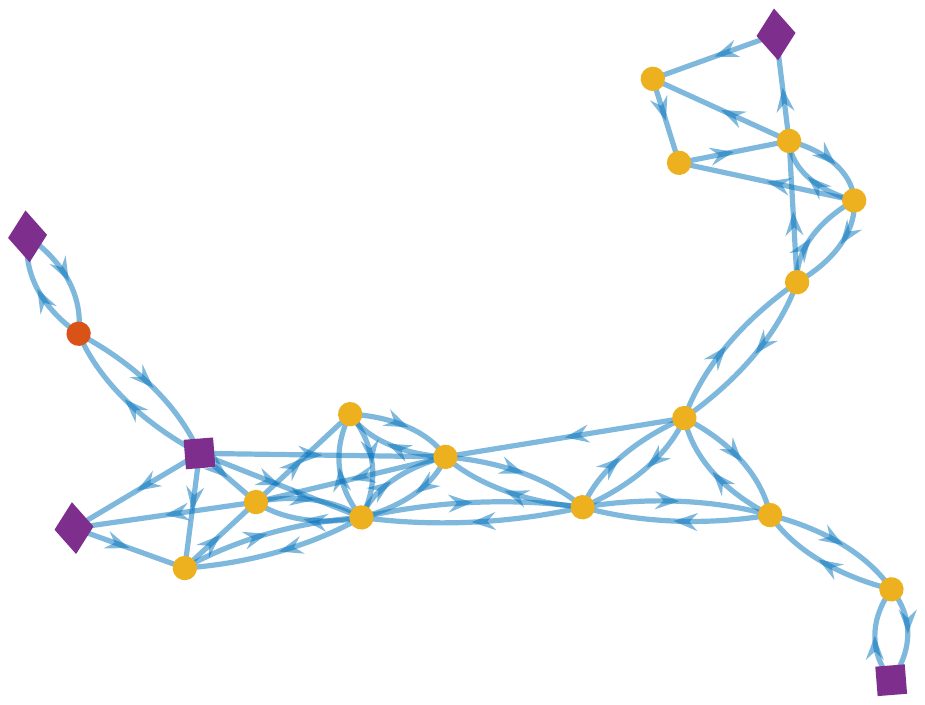}
		\subcaption{}
	\end{minipage}
	\caption{Reduced directed networks with different dimensions. (a) $r = 200$; (b) $r = 100$; (c) $r = 50$; (d) $r = 20$.}
	\label{fig:reducednetwork}		
\end{figure}
\section{Conclusion} \label{sec:conclusion}

This paper solves a structure preserving model reduction problem for directed network systems that obey locally consensus protocols and have semistable dynamics. The notion of clusterability is proposed to classify the groups of vertices that can be aggregated to guarantee a bounded approximation error. The pairwise dissimilarity, quantifying the difference between two clusterable vertices, can be characterized for a directed network based on the pseudo controllability and observability Gramians of semistable systems. A graph clustering algorithm then disassembles the vertices that behave differently. The reduced-order model is obtained in the Petrov-Galerkin framework with projections generated from the resulting clustering of the network. It is shown that the reduced-order model preserves a network structure among the clusters, as a reduced Laplacian matrix. Through numerical examples, the efficiency of the proposed method is then verified.

%
%


%

\bibliographystyle{IEEEtran}
\bibliography{NetworkReduction}

\end{document}